\documentclass{amsart}
\usepackage{hyperref,amsmath,amsfonts,amssymb,amsthm,enumerate,mathtools,graphicx,xparse,xcolor,tikz,scalerel,stackengine}
\usepackage{tikz-3dplot}
\usepackage[a4paper, left=2cm, right=2cm, top=3cm, bottom=2cm]{geometry}
\usepackage[utf8]{inputenc}
\usepackage{enumitem}
\usepackage{float}
\usepackage{mdframed}

\graphicspath{{figures/}}
\usepackage{pgfplots}
\usepackage{subcaption} 
\usepackage{esint}   
\usepackage{xcolor} 
\usepackage{arydshln}   
\usepackage{comment}

\usepackage{accents}
\usepackage{empheq}
\usepackage{multimedia}
\usepackage{animate}
\usepackage[makeroom]{cancel}
\definecolor{morange}{rgb}{0.92, 0.51, 0.11}
\definecolor{mLightGreen}{HTML}{14B03D}
\definecolor{mPink}{HTML}{e1388d}
\definecolor{mRed}{HTML}{E83C3C}

\tikzstyle{dot}=[draw,fill=white,circle,inner sep=0pt,minimum size=4pt]

\pgfkeys{
  /pgf/declare function={Si1(\r,\nu,\d,\D)=2*\nu*\d*(\D/\d)^(\r-1);},
  /pgf/declare function={Si2(\r,\nu,\d,\D)=2*\nu*(1+\D^2/\d^2)^(\r/2-1)*\D*2^(1-\r/2);},
  /pgf/declare function={Si(\r,\q,\nuratio,\D)=Si1(\r,1/(1+\nuratio),1,\D)+Si2(\q,\nuratio/(1+\nuratio),1,\D);},
  /pgf/declare function={Sii1(\nu,\D)=2*\nu*\D;},
  /pgf/declare function={Dii(\rp,\nu,\d,\Stwo)=1/(2*\nu)*(1+\Stwo^2/(2*\nu*\d)^2)^(\rp/2-1)*\Stwo*2^(1-\rp/2);},
}

\numberwithin{dummy}{section}
\numberwithin{equation}{section}

\theoremstyle{plain}
\newtheorem{thm}{Theorem}[section]
\newtheorem{lemma}[thm]{Lemma}

\theoremstyle{plain}
\newtheorem{df}[thm]{Definition}

\theoremstyle{definition}
\newtheorem{rem}{Remark}
\newtheorem*{example}{Example}

\newcommand{\rr}{\mathbb{R}}

\newcommand{\nn}{\mathbb{N}}

\newcommand{\Cper}{C_\text{per}}
\newcommand{\Sper}{C^\infty_\text{per}}

\newcommand{\dx}{\,dx}
\newcommand{\dy}{\,dy}
\newcommand{\dt}{\,dt}
\newcommand{\ds}{\,ds}

\newcommand{\dert}{\partial_t}

\newcommand{\bn}{\pmb n}
\newcommand{\bxi}{\pmb \xi}

\newcommand{\bz}{\pmb z}

\newcommand{\tk}{\mathcal{T}_k}

\newcommand{\todo}[1] {\textcolor{red}{TODO: #1}}

\begin{document}

\title{Well--posedness and homogenization of a waste--protein clearance model in cortical tissue.}\thanks{LW and MB acknowledge support from project 90126 financed by GAUK and the project 25-16592S financed by GA\v{C}R. MER acknowledges support from Stiftelsen Kristian Gerhard Jebsen via the K.~G.~Jebsen Centre for Brain Fluid Research. AJ acknowledges partial support from the Austrian Science Fund (FWF), grant 10.55776/PAT2687825, and from the Austrian Federal Ministry for Women, Science and Research and implemented by \"OAD, project MULT09/2025. This work has received funding from the European Research Council (ERC) under the European Union's Horizon 2020 research and innovation programme, ERC Advanced Grant NEUROMORPH, no.~101018153. For open-access purposes, the authors have applied a CC BY public copyright license to any author-accepted manuscript version arising from this submission.}
\author[M.~Bul\'{i}\v{c}ek]{Miroslav Bul\'i\v{c}ek}
\address{Charles University, Faculty of Mathematics and Physics, Mathematical Institute\\
Sokolovsk\'{a} 83, 186 75 Prague, Czech Republic}
\email{mbul8060@karlin.mff.cuni.cz}

\author[A. J\"{u}ngel]{Ansgar J\"{u}ngel}
\address{TU Wien, Institute for Analysis and Scientific Computing \\
Wiedner Hauptstraße 8-10, 1040 Wien, Austria}
\email{juengel@tuwien.ac.at}

\author[M. E. Rognes]{Marie E. Rognes}
\address{Simula Research Laboratory, Department of Numerical Analysis and Scientific Computing \\ 
Kristian Augusts gate 23, 0164 Oslo, Norway \\ and
K.~G.~Jebsen Centre for Brain Fluid Research, University of Oslo, Oslo, Norway}
\email{meg@simula.no}

\author[L. Wintrov\'{a}]{Lucie Wintrov\'{a}}
\address{Charles University, Faculty of Mathematics and Physics, Mathematical Institute \\
Sokolovsk\'{a} 83, 186 75 Prague, Czech Republic}
\email{wintrova@karlin.mff.cuni.cz}

\begin{abstract}
We formulate and analyze a mathematical model for the clearance of waste proteins from cortical tissue. The model tracks three species of waste proteins, distinguished by their size, and incorporates constant production at neuronal surfaces, reaction–diffusion in the bulk of interstitial fluid, and nonlinear dynamics on capillary surfaces. We establish the existence, boundedness, and uniqueness of a solution to the microscopic model. 
Using two-scale convergence, we derive a homogenized model and prove that the microscopic solutions converge to the corresponding macroscopic limit. The resulting model reveals how the microscopic arrangement and relative volume fractions of neurons and capillaries affect waste-protein transport and clearance at the tissue scale. Numerical simulations illustrate these effects and demonstrate how variations in cortical microstructure translate into differences in the macroscopic clearance dynamics.
\end{abstract}
\keywords{Homogenization, two-scale convergence, reaction--diffusion systems, bulk--surface coupling, perforated domains, nonlinear reaction systems, multiscale analysis, receptor--ligand kinetics, amyloid-$\beta$ dynamics.}

\subjclass[2000]{35B27, 35K57, 35Q92, 92C37}

\maketitle
\section{Introduction}\label{sec:intro}
Alzheimer's disease is the most common neurodegenerative disorder, with its incidence increasing markedly with age \cite{Serot2012}. Although the mechanisms underlying the disease are complex and remain incompletely understood, the amyloid cascade hypothesis identifies the accumulation of amyloid-$\beta$ (A$\beta$) as an important hallmark of disease progression \cite{Goate1991, Hardy1992, Jack2010}. This hypothesis has been debated in recent years, as growing evidence indicates that tau pathology contributes substantially to neurodegeneration \cite{Cho2019, Singleton2025}. Nevertheless, A$\beta$ remains a central player in the disease, as it is thought to occur early in the pathological process and to promote tau pathology and neurotoxicity \cite{Potter2013, Hanseeuw2019}. A$\beta$ is continuously produced in the brain and occurs in several forms, ranging from soluble monomers and oligomers to larger aggregated species and plaques. Characterizing their dynamics and interactions is therefore essential for understanding A$\beta$ accumulation in brain tissue.

A$\beta$ dynamics arise from the interplay of production, transport, aggregation, degradation, and clearance \cite{tarasoff‐conway2015clearance}. A$\beta$ is predominantly produced by neurons and is released into the extracellular space (ECS), which is filled with interstitial fluid. Once released, A$\beta$ moves through the ECS either by diffusion or by bulk flow of the interstitial fluid~\cite{Cserr1981, Iliff2012, Bohr2022}. Here, we model transport in the ECS by diffusion alone, a modelling choice motivated by~\cite{Holter2017}. Within the ECS, A$\beta$  can undergo aggregation or degradation~\cite{CHEN2019, Elali2013}. In addition, soluble A$\beta$ is removed from the brain parenchyma by receptor-mediated transport across the blood--brain barrier (BBB)~\cite{hladky2018elimination, Pascale2011, Zlokovic2011}. Together, these processes couple the local A$\beta$ dynamics to the spatial organization of the surrounding tissue.

The dynamics of A$\beta$ take place across widely separated spatial scales. While cortical tissue extends over macroscopic distances and volumes of several cubic centimeters, A$\beta$ transport and clearance are governed by structures at the cellular scale, with neurons and capillaries having characteristic dimensions on the order of micrometers \cite{nicholson2011brain, SCHLAGETER1999}. The microscopic geometry determines the structure of the extracellular space and, consequently, the pathways available for diffusion as well as the extent and distribution of neuronal and capillary interfaces. Thus, the microscopic organization of the tissue directly affects both A$\beta$ transport and its production and clearance. Homogenization \cite{Allaire1992, Nguetseng} provides a natural framework for bridging these scales. By considering a periodically distributed microscopic structure and studying the limit as its characteristic size tends to zero, one obtains an effective macroscopic model whose coefficients incorporate the microscopic geometry and interface processes.

The main contribution of this work is the rigorous derivation of an effective macroscopic model for A$\beta$ clearance in cortical tissue from a microscopic description. Starting from a model that incorporates diffusion, aggregation, degradation, and receptor-mediated transport across the BBB, we derive a homogenized system whose effective coefficients explicitly reflect the underlying tissue microstructure. The resulting model provides a mathematically justified link between cellular-scale transport mechanisms and tissue-scale clearance dynamics, offering a quantitative framework for studying how microscopic tissue organization influences effective clearance.

The analysis faces a substantial difficulty arising from the strong coupling between the bulk aggregation network and the receptor-mediated clearance on the interfaces, which generates higher-order nonlinearities for which standard approaches do not directly yield the uniform estimates needed for homogenization. We show that, under natural structural assumptions on the reaction network, a modified Moser--De Giorgi iteration provides global $L^\infty$ bounds for all species, which ensure well-posedness of the microscopic model and allow us to pass to the macroscopic limit. The nonlinear interactions between the bulk species and surface-bound receptors on oscillating boundaries differ from the linear Robin-type bulk--surface couplings considered in \cite{AllaireDamlamianHornung, NeussRadu}, for which weak surface two-scale convergence is sufficient to identify the limiting boundary terms. Nonlinear bulk and boundary reactions in periodically structured media have also been studied in \cite{Ghan2016}. However, in our setting the nonlinear boundary flux is coupled to additional surface variables that evolve according to their own reaction kinetics. These receptor species have no surface diffusion, in contrast to the surface-diffusion models considered in \cite{Ptashnyk2020, Graf2014}, and therefore do not possess Sobolev spatial regularity. Instead, the analysis relies on uniform $L^\infty$ bounds for the receptor variables and on their coupling with the bulk concentrations. In contrast to existing nonlinear multiscale models of bulk--surface signalling \cite{Marciniak2008, Marciniak2012}, our model combines a multi-species bulk aggregation network with a multi-state surface receptor system. Moreover, rather than prescribing a specific reaction topology, both systems are described through general structural assumptions, including sign conditions, a conservation law for the receptor states, and an algebraic dissipativity condition on the aggregation network. 

The paper is organized as follows. Section~\ref{sec:micro} introduces the microscopic geometry and derives the mathematical model at the neuronal scale. Section~\ref{sec:wellposed} proves well-posedness of the micromodel and derives the necessary uniform estimates. Section~\ref{sec:conv} establishes the homogenization result, derives the corresponding macroscopic system, and analyzes it. Lastly, Section~\ref{sec:num} presents numerical simulations illustrating the influence of the microscopic structure on macroscopic diffusion of A$\beta$.

\section{Derivation of a micromodel for A$\beta$ clearance}
\label{sec:micro}
In this section, we present a derivation of a microscopic mathematical model for the clearance mechanisms in the cortical tissue. 
\subsection{Geometry representation} We consider a connected Lipschitz domain $\Omega \subset \mathbb{R}^3$ representing a segment of cortical tissue and assume a periodic distribution of cells within $\Omega$. This periodicity is understood as an idealization of the microscopic tissue structure, motivated by the separation between the cellular and macroscopic length scales. To describe the microscopic structure of the tissue, we follow the notation commonly used in the homogenization literature \cite{Marciniak2008,Ptashnyk2020,Neuss-Radu2007}.
We define a reference cell $Z = [0,l_1]\times[0,l_2]\times[0,l_3]$.
Within this cell, let $Y_N$ denote the union of neuronal regions, and $Y_C$ the union of capillary regions. The remaining connected subdomain, $Y = Z \setminus (Y_N \cup Y_C)$, represents the extracellular space. The resulting decomposition of the reference cell is illustrated in Figure~\ref{fig:geometry}. We denote the neuronal boundaries by $\Gamma_N := \partial Y_N$, and the capillary boundaries by $\Gamma_C := \partial Y_C$. The cell $Z$ is assumed to be periodically repeated across $\mathbb{R}^3$.

\begin{figure}[htbp]
    \centering
    \begin{subfigure}[b]{0.45\textwidth}
        \centering
        \begin{tikzpicture}[scale=0.95]
    \draw (0,0) rectangle (5,4);
    \node[black] at (4.5, 3.25) {\Large  $Y$};

    \draw[blue] (0.1, 0.1) rectangle (3.9, 3.9);
    \fill[blue!30] (0.1, 0.1) rectangle (3.9, 3.9);
    \node[blue] at (2, 2) {\Large $Y_N$};

    \draw[orange] (3.975, 1.525) rectangle (4.925, 2.475);
    \fill[orange!30] (3.975, 1.525) rectangle (4.925, 2.475);
    \node[orange] at (4.5, 2) {\Large $Y_C$};
\end{tikzpicture}
        \caption{The reference cell $Z$.} 
        \label{fig:homcell2D}
    \end{subfigure}
    \hfill
    \begin{subfigure}[b]{0.45\textwidth}
        \centering
        \includegraphics[width=\textwidth]{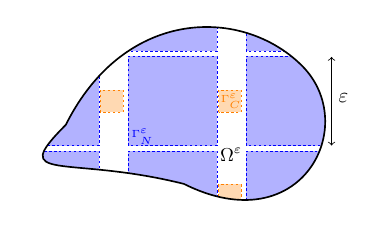}
        \vspace{-1.25cm}
        \caption{The perforated domain $\Omega^\varepsilon$.}
        \label{fig:domaingeometry}
    \end{subfigure}
    \caption{{\bf Constructing the microscopic domain $\Omega^\varepsilon$.} The reference cell $Z$ contains the neuronal region $Y_N$ and capillary region $Y_C$ within the extracellular space $Y$. The microscopic domain $\Omega^\varepsilon$ is obtained by periodically removing $\varepsilon$-scaled copies of $Y_N\cup Y_C$ from $\Omega$, with $\Gamma_N^\varepsilon$ and $\Gamma_C^\varepsilon$ denoting the corresponding interfaces.}
    \label{fig:geometry}
\end{figure}
 
For a fixed scale parameter $\varepsilon > 0$, the microscopic geometry within $\Omega$ is obtained by intersecting the $\varepsilon$-scaled periodic structure with $\Omega$. More precisely, we define \[\Xi := \left\{ \xi \in \mathbb{R}^3  :\; \xi = (l_1 z_1, l_2 z_2, l_3 z_3),\ \bz = (z_1,z_2,z_3) \in \mathbb{Z}^3,\ \varepsilon\big((Y_C \cup Y_N) + \xi\big) \subset \Omega \right\}\] and set
\begin{equation}\label{microdomain}
    \begin{aligned}
    \Omega_{CN}^\varepsilon := \bigcup_{\xi \in \Xi} \varepsilon (Y_C \cap Y_N+ \xi),   &&
    \Omega^\varepsilon := \Omega\setminus\Omega_{CN}, &&
    \Gamma_C^\varepsilon := \bigcup_{\xi \in \Xi} \varepsilon ( \Gamma_C + \xi), &&
    \Gamma_N^\varepsilon := \bigcup_{\xi \in \Xi} \varepsilon ( \Gamma_N + \xi).
\end{aligned}
\end{equation}
This construction is illustrated in Figure~\ref{fig:geometry} and the $\varepsilon$-scaling is illustrated in Figure~\ref{fig:homdom}.  For a fixed time horizon $T>0$, we further introduce the 
space-time cylinders
\[
Q := (0,T)\times\Omega, \qquad Q^\varepsilon := (0,T)\times\Omega^\varepsilon.
\]

\begin{figure}[h!]
    \centering
\includegraphics[width=\linewidth]{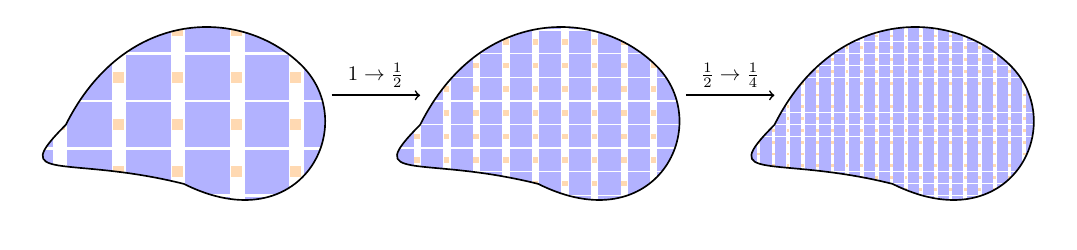}
\vspace{-3.5em}
\caption{{\bf Scaling of the microscopic geometry $\Omega^\varepsilon$.} The cortical microstructure from Figure \ref{fig:geometry} is periodically extended over the domain $\Omega$, scaled by $\varepsilon= 1, \frac{1}{2}, \frac{1}{4}$.}
    \label{fig:homdom}
\end{figure}

\subsection{Modelling A$\beta$ dynamics} A$\beta$ dynamics are modeled as a multi-species reaction--diffusion system with coupled bulk--surface interactions. Monomer production at neuronal interfaces induces a boundary source, while in the extracellular domain, the concentrations evolve by diffusion, degradation, and nonlinear aggregation, coupling the different species. 
Clearance is described by flux conditions at the capillary boundary, where bulk concentrations interact with surface-bound receptor densities via nonlinear binding and transport mechanisms.

The unknowns are the bulk concentrations $c_i^\varepsilon = c_i^\varepsilon(t,x)$ for $i=1,2,3$ and the surface densities $r_j^\varepsilon = r_j^\varepsilon(t,x)$ for $j=0,1,2,3$. Here, $c_1^\varepsilon$, $c_2^\varepsilon$, and $c_3^\varepsilon$ correspond to monomers, oligomers, and plaques, respectively, while $r_0^\varepsilon$ denotes free receptors and $r_j^\varepsilon$ receptors bound to the respective aggregation states. 
The coefficients $D_i > 0$ denote the bulk diffusion coefficients. 
The terms $f_{\mathrm{deg},i}(c_i^\varepsilon)$ and $f_{\mathrm{agg},i}(\vec{c}_\varepsilon)$ represent degradation and aggregation kinetics in the bulk. The boundary flux $q_{\mathrm{prod},i}$ models production at neuronal interfaces, while $f_{\mathrm{bind},i}(\vec{c}_\varepsilon,\vec{r}_\varepsilon)$ describes receptor-mediated exchange at the capillary boundary. The surface reactions $f_{\mathrm{rec},j}(\vec{c}_\varepsilon,\vec{r}_\varepsilon)$ encode the corresponding receptor dynamics.

The resulting system is given by 
\begin{align}
\partial_t c^\varepsilon_i 
- \nabla\!\cdot\!(D_i\nabla c^\varepsilon_i) 
&= f_{\mathrm{deg},i}(c^\varepsilon_i ) 
+ f_{\mathrm{agg},i}(\vec{c}_\varepsilon, \vec{c}_\varepsilon )
  &&\text{in } \Omega^\varepsilon , \label{ISFeps}
\\
 D_i\nabla c^\varepsilon_i \cdot \bn
&= \varepsilon q_{\mathrm{prod},i} &&\text{on } \Gamma^\varepsilon_N , \label{productioneps}
\\
D_i\nabla c^\varepsilon_i \cdot \bn
&= \varepsilon f_{\mathrm{bind},i}(\vec{c}_\varepsilon , \vec{r}_\varepsilon) &&\text{on } \Gamma^\varepsilon_C ,  \label{uptakeeps} \\
\partial_t r^\varepsilon_j  &= f_{\mathrm{rec},j}(\vec{c}_\varepsilon  , \vec{r}_\varepsilon ) &&\text{on } \Gamma^\varepsilon_C, \label{receptoreps}
\end{align}
for $i=1,2,3$ and $j=0,1,2,3$, where $\bn$ denotes the outer unit normal to $\Omega^\varepsilon$. 
The scaling with $\varepsilon$ in the boundary fluxes reflects the surface-to-volume ratio of the microstructure and ensures a nontrivial contribution of boundary processes in the homogenization limit.

The system is complemented by initial conditions
\begin{equation}\label{IC}
\begin{aligned}
c_i^\varepsilon(0,x) &= c_{i,0}^\varepsilon(x) &&\text{for } x\in \Omega^\varepsilon,\\
r_j^\varepsilon(0,x) &= r_{j,0}^\varepsilon(x) &&\text{for } x\in \Gamma_C^\varepsilon,
\end{aligned}
\end{equation}
and, for simplicity, by the no-flux condition on the outer boundary
\begin{align}
D_i\nabla c^\varepsilon_i \cdot \bn
&= 0 &&\text{on } \partial\Omega, \label{outereps}
\end{align}
which expresses that the domain is closed with respect to A$\beta$ transport at the macroscale.

\subsection{Notation} 
In what follows, we adopt the standard notation for Lebesgue, Sobolev, and Bochner spaces, equipping them with their usual norms. In addition, for a domain $\Omega \subset \mathbb{R}^d$, $d\in \nn $, $s \in (0,1)$ and $1 \leq p < \infty$, we define the fractional Sobolev space
\[
W^{s,p}(\Omega)
= \left\{ u \in L^p(\Omega) \;:\; [u]_{W^{s,p}(\Omega)} < \infty \right\},
\]
where the Gagliardo seminorm is given by
\[
[u]_{W^{s,p}(\Omega)}^p
= \int_\Omega \int_\Omega \frac{|u(x) - u(y)|^p}{|x - y|^{d + sp}} \, dx \, dy.
\]
The associated norm is defined as
\[
\|u\|_{W^{s,p}(\Omega)}^p
:= \|u\|_{L^p(\Omega)}^p + [u]_{W^{s,p}(\Omega)}^p.
\] Spatial vectors, such as $\mathbf{n} : \Omega \to \mathbb{R}^3$, are denoted by boldface symbols. In contrast, we write $\vec{c} = (c_1,c_2,c_3)$ for vectors representing mixtures or species concentrations. The distinction between these two types of quantities will always be clear from the context.

\section{Well-posedness and a priori estimates for the microscopic model}\label{sec:wellposed}
In this section, we prove the existence and uniqueness of a solution to the microscopic problem \eqref{ISFeps}--\eqref{outereps}. We also derive a priori estimates that allow us to pass to the limit as the number of homogenization cells tends to infinity.

The existence of a weak solution to \eqref{ISFeps}--\eqref{outereps} is established by means of a Schauder fixed-point argument~\cite{brezis2011}, which enables the decoupling of the surface reaction system from the bulk equations. The proof relies on the derivation of suitable a priori estimates for the bulk concentrations, exploiting the particular structure of the nonlinear reaction terms~\eqref{aggineq}. The $L^\infty$-bounds are established by means of an Alikakos-type iteration procedure~\cite{Alikakos1979}. The iteration relies on the quadratic growth structure of the bulk nonlinearities, combined with the higher integrability of the bulk concentrations, which allows the nonlinear terms to be controlled at each step and the iteration to be closed. Finally, uniqueness is shown by standard energy methods based on the established boundedness properties and Grönwall's inequality.

\subsection{Model assumptions}
We impose the following assumptions on the functions appearing in the model equations and on the initial data, which are motivated by biological considerations and further discussed in Remark~\ref{rem:as}.
\begin{enumerate}[label=A\arabic*.,ref=A\arabic*]
    \item \label{asagg}
    {\bf Bulk aggregation terms.} Let $\vec{a}, \vec{b} \in \mathbb{R}^3$. For $f_{\mathrm{agg},i}$, $i = 1,2,3$, we assume that
    \[
        f_{\mathrm{agg},i}(\vec{a}, \vec{b}) 
        = \sum_{j,l=1}^3 \kappa^i_{jl} a_l b_j 
        + \sum_{j=1}^3 \kappa^i_j b_j,
    \]
    where the coefficients satisfy $\kappa^i_{jl} \ge 0$ and $\kappa^i_j \ge 0$ for all $l$ and all $j \ne i$. 
    Furthermore, let
    \[S^i_{jl} := \kappa^i_{jl} + \kappa^i_{lj}, \qquad i,j,l = 1,2,3.\]
    We assume that there exist positive constants \(K_1,K_2,K_3\) such that
    \begin{equation}\label{aggineq}
        K_j S^j_{il} + K_i S^i_{jl} + K_l S^l_{ij} \le 0, \qquad \text{for } 1\leq i\leq j\leq l \leq 3.
    \end{equation}
    Since the left-hand side of \eqref{aggineq} is invariant under permutations of $i,j,l$ (as $S^p_{qr}=S^p_{rq}$), the inequality in fact holds for all $i,j,l = 1,2,3$.
       
    \item \label{asrec}
    {\bf Receptor dynamics terms.} Let $\vec{a} \in \mathbb{R}^3$ and $\vec{s} \in \mathbb{R}^4$. For $f_{\mathrm{rec},j}$, $j = 0,\dots,3$, we assume that
    \[
        f_{\mathrm{rec},j}(\vec{a}, \vec{s}) 
        = \sum_{i=0}^3 \sum_{l=1}^3 \lambda^j_{il} a_l s_i 
        + \sum_{i=0}^3 \lambda^j_i s_i,
    \]
    where the coefficients satisfy $\lambda^j_{il} \ge 0$ and $\lambda^j_i \ge 0$ for all $i \ne j$ and all $l$. 
    Moreover, we assume
    \begin{align}
        \sum_{j=0}^3 f_{\mathrm{rec},j}(\vec{a}, \vec{s}) = 0. \label{receptorconserved}
    \end{align}

\item \label{asbind}
    {\bf Surface binding terms.} Let $\vec{a} \in \mathbb{R}^3$ and $\vec{s} \in \mathbb{R}^4$. For $f_{\mathrm{bind},j}$, $i = 1,2,3$, we assume that
    \[
        f_{\mathrm{bind},i}(\vec{a}, \vec{s}) 
        = \sum_{j=0}^3 \beta^i_j a_i s_j 
        + \sum_{j=0}^3 \gamma^i_j s_j,
    \]
    where the coefficients satisfy $\gamma^i_j \ge 0$ and $\beta^i_j \le 0$ for all $j$.

    \item \label{asdeg}
     {\bf Degradation terms.} Let $\vec{a} \in \mathbb{R}^3$. For $i=1,2,3$, the degradation terms are given by
    \[
        f_{\mathrm{deg},i}(a_i) = -\delta_i a_i,
    \]
    where $\delta_i >0 $.

    \item \label{asprod}
    {\bf Production terms.} For $i=1,2,3$, we assume that the production terms satisfy  $q_{\mathrm{prod},i} \geq 0$ a.e. on $ \Gamma_C^\varepsilon\times (0,T)$. Furthermore we assume \[\|q_{\mathrm{prod},i}\|_{L^\infty(\Gamma_N^\varepsilon\times (0,T))} \leq C.\]

    \item \label{asdif}
    {\bf Bulk diffusion coefficients.} For $i=1,2,3$, the diffusivity coefficients $D_i$ are strictly positive and constant.

    \item \label{asinit}
    {\bf Initial data.} For all $i$ and $j$, we assume that the initial data satisfy $c_{i,0}^\varepsilon \ge 0$ a.e. in $\Omega^\varepsilon$ and $r_{j,0}^\varepsilon \ge 0$ a.e. on $\Gamma_C^\varepsilon.$ Furthermore, we assume
    \[\|c_{i,0}^\varepsilon\|_{L^\infty(\Omega^\varepsilon)}\leq C, \qquad \|r_{j,0}^\varepsilon\|_{L^\infty(\Gamma_C^\varepsilon)} \leq C,\] where $C$  is independent of $\varepsilon$. 
\end{enumerate}

\begin{rem}[Discussion of the assumptions]\label{rem:as}
Assumption \ref{asagg} describes aggregation through pairwise interactions between molecular species, resulting in reaction terms that are at most quadratic in the concentrations. The nonnegativity conditions on the coefficients ensure that the corresponding interactions do not introduce negative production terms. Condition \eqref{aggineq} imposes a weighted dissipativity condition on the quadratic terms, which is used to obtain uniform a priori estimates for the total concentration. In particular, if \(i,j,l\) are pairwise distinct, then every term in \eqref{aggineq} is nonnegative by the sign conditions imposed on  \(\kappa^i_{jl}\). Consequently, \eqref{aggineq} can hold only if all such coefficients vanish. Thus, the model excludes quadratic reactions involving three distinct species, such as interactions of the form $c_1+c_2\to c_3$.

Assumption \ref{asrec} models receptor dynamics through mass-action kinetics on the cell membrane. The conservation law \eqref{receptorconserved} expresses the fact that receptors may change state through binding and unbinding events but are neither created nor destroyed.
The binding terms in Assumption \ref{asbind} are consistent with mass-action kinetics. The sign conditions encode that binding decreases the concentration of free ligands, whereas dissociation processes contribute positively to the concentration of unbound species.
The degradation terms in Assumption \ref{asdeg} represent natural clearance mechanisms and are assumed to be proportional to the local concentration. Assumption \ref{asprod} reflects the fact that production rates are nonnegative, while Assumption \ref{asdif} corresponds to standard diffusive transport with positive diffusivities.
Finally, Assumption \ref{asinit} requires biologically meaningful nonnegative initial concentrations and receptor densities. The uniform $L^\infty$-bounds exclude unrealistically large initial values and provide estimates that are independent of the microscopic scale $\varepsilon$.
\end{rem}

\begin{example}[Model satisfying the assumptions]
The following constitutive relations, based on a three-species aggregation model introduced in \cite{Heppner2015, CHEN2019}, provide an example satisfying Assumptions \ref{asagg}–\ref{asinit}. The bulk aggregation terms are given by
\begin{equation*}
    \begin{aligned}
 f_{\mathrm{agg},1} ( \vec{c}, \vec{c})&= - 2 k_{11} c_1^2 - k_{12} c_1 c_2 - k_{13} c_1 c_3 
                   + (2 \gamma_{20} + \gamma_{21}) c_2 + \gamma_{31} c_3, \\
f_{\mathrm{agg},2} ( \vec{c}, \vec{c}) &= k_{11} c_1^2 
                   -  (1+\alpha) k_{22} c_2^2 - k_{23} c_2 c_3
                   + (\gamma_{22} - \gamma_{20}) c_2 
                   + ( 2\gamma_{30} + \gamma_{32} ) c_3, \\
f_{\mathrm{agg},3} ( \vec{c}, \vec{c}) &=  \alpha k_{22} c_2^2 
                   -  \gamma_{30} c_3.
\end{aligned}
\end{equation*}
  with receptor dynamics
  \begin{equation*}
  \begin{aligned}
  f_{\mathrm{rec},0} ( \vec{c}, \vec{r}) = - \sum_{i=1}^3 k_{\mathrm{on},i} c_i r_0 + \sum_{i=1}^3 k_{\mathrm{off},i} r_i, \qquad
  f_{\mathrm{rec},i} ( \vec{c}, \vec{r}) = k_{\mathrm{on},i} c_i r_0 - k_{\mathrm{off},i} ( \vec{c}, \vec{r}) r_i,\quad i=1,2,3,
  \end{aligned}
  \end{equation*}
   and binding terms
  \begin{equation*}
  \begin{aligned}
  f_{\mathrm{bind},1}( \vec{c}, \vec{r}) = - k_{\mathrm{on},1} c_1 r_0,\qquad
  f_{\mathrm{bind},2}( \vec{c}, \vec{r}) = - k_{\mathrm{on},2} c_2 r_0 + \tfrac{1}{2} k_{\mathrm{off},2} r_2,\qquad
  f_{\mathrm{bind},3}( \vec{c}, \vec{r}) = - k_{\mathrm{on},3} c_3 r_0 + k_{\mathrm{off},3} r_3,
  \end{aligned}
  \end{equation*}
where all coefficients are positive. One directly verifies that Assumptions \ref{asrec} and \ref{asbind} are satisfied. Moreover, taking $K_1=1$, $K_2=\frac{k_{12}}{k_{k11}}$, and $K_3=\frac{k_{12}k_{23}}{\alpha k_{k11}k_{22}}$ in \eqref{aggineq}, Assumption \ref{asagg} is also satisfied. If the initial data are bounded and nonnegative, the remaining conditions follow directly from the structure of the diffusion, production, and degradation terms.
  \end{example}

\subsection{Weak solution to the microscopic model}
We now introduce the notion of a weak solution to the microscopic problem.
\begin{df}[Weak solution]\label{weaksoldef}
Let $T>0$ be fixed. A pair of functions 
\[
(\vec{c}_\varepsilon, \vec{r}_\varepsilon) = \bigl( c_1^\varepsilon,c_2^\varepsilon,c_3^\varepsilon, r_0^\varepsilon, r_1^\varepsilon, r_2^\varepsilon, r_3^\varepsilon \bigr)
\]
is called a weak solution to \eqref{ISFeps}--\eqref{outereps} on $(0,T)$ if, for all $i=1,2,3$ and $j=0,1,2,3$,
\begin{equation}\label{wfspace}
    \begin{gathered}
    c_i^\varepsilon \in L^2(0,T; W^{1,2}(\Omega^\varepsilon)), 
\quad \partial_t c_i^\varepsilon \in L^2(0,T; W^{-1,2}(\Omega^\varepsilon)), \\
r_j^\varepsilon \in 
L^\infty((0,T)\times \Gamma_C^\varepsilon), 
\quad \partial_t r_j^\varepsilon \in L^2(0,T; L^2(\Gamma_C^\varepsilon)),
\end{gathered}
\end{equation}
and for all test functions $\varphi_i \in L^2(0,T; W^{1,2}(\Omega^\varepsilon))$ and $\psi_j \in L^2(0,T; L^2(\Gamma_C^\varepsilon))$, it holds
\begin{align}\label{wfbulk}
\int_0^T \!\!&\left\langle \partial_t c_i^\varepsilon, \varphi_i \right\rangle \, dt 
 + \int_0^T \!\!\int_{\Omega^\varepsilon} D_i \nabla c_i^\varepsilon \cdot \nabla \varphi_i \, dx\,dt
= \varepsilon \int_0^T \!\!\int_{\Gamma_N^\varepsilon} q_{\mathrm{prod},i} \, \varphi_i \, dS\,dt  \\& + \varepsilon \int_0^T \!\!\int_{\Gamma_C^\varepsilon} f_{\mathrm{bind},i}(\vec{c}_\varepsilon,\vec{r}_\varepsilon) \, \varphi_i \, dS\,dt + \int_0^T \!\!\int_{\Omega^\varepsilon} \bigl( f_{\mathrm{deg},i}(c_i^\varepsilon)
+ f_{\mathrm{agg},i}(\vec{c}_\varepsilon, \vec{c}_\varepsilon) \bigr) \varphi_i \, dx\,dt
\nonumber,
\end{align}
and
\begin{align}\label{wfsurf}
\int_0^T \!\!\int_{\Gamma_C^\varepsilon} \partial_t r_j^\varepsilon \, \psi_j \, dS\,dt
= \int_0^T \!\!\int_{\Gamma_C^\varepsilon} f_{\mathrm{rec},j}(\vec{c}_\varepsilon,\vec{r}_\varepsilon) \, \psi_j \, dS\,dt.
\end{align}
Moreover, the initial conditions \eqref{IC} are attained in the sense that
\[
c_i^\varepsilon(t) \to c_{i,0}^\varepsilon
\text{ in } L^2(\Omega^\varepsilon)
\quad \text{and} \quad
r_j^\varepsilon(t) \to r_{j,0}^\varepsilon
\text{ in } L^2(\Gamma_C^\varepsilon)
\quad \text{as } t\to0^+.
\]
\end{df}
In the subsequent analysis, we shall repeatedly use the following scaled trace inequalities. The classical $W^{1,2}$--inequality will be used in the derivation of the a priori estimates, while the fractional version is included for later use in proving the strong convergence of the traces.
\begin{rem}[Scaled trace inequality]\label{rem:scaletrace}
The following scaled trace inequality holds
\begin{equation}\label{scaledtrace}
\varepsilon \|c\|_{L^2(\Gamma^\varepsilon)}^2
\le
C_\delta \|c\|_{L^2(\Omega^\varepsilon)}^2
+
\varepsilon^2 \delta \|\nabla c\|_{L^2(\Omega^\varepsilon)}^2,
\end{equation}
for every \(c\in W^{1,2}(\Omega^\varepsilon)\). The constant
\(C_\delta\) is independent of \(\varepsilon\) and satisfies
\[
C_\delta\le
C\left(1+\frac1\delta\right),
\]
where \(C>0\) depends only on the reference geometry (in particular on
\(Y\) and \(\Gamma\)). The factor \(\varepsilon\) in front of the
boundary norm reflects the natural scaling of oscillating surfaces, compensating for the difference in dimension between the volume and surface measures. See \cite[Lemma 3]{Hornung1991} for the proof.

Furthermore, for every \(c\in W^{r,2}(\Omega^\varepsilon)\) with
\(1/2<r<1\), the fractional trace inequality
\begin{equation}\label{scaledfractrace}
    \varepsilon \|c\|_{L^2(\Gamma^\varepsilon)}^2
\le
C\|c\|_{L^2(\Omega^\varepsilon)}^2
+
C\varepsilon^{2r}
[c]_{W^{r,2}(\Omega^\varepsilon)}^2,
\end{equation}
holds, where \(C>0\) may depend on \(r\), but is independent of \(\varepsilon\). For the proof see \cite[Lemma 4.2]{Marciniak2008}. 
\end{rem}

In order to derive uniform estimates and analyze the limit as $\varepsilon \to 0$ on the fixed macroscopic domain, we employ an extension operator from $\Omega^\varepsilon$ to $\Omega$, see \cite{Acerbi1992, Cioranescu1979, Hornung1991}.
\begin{rem}[Extension operator]\label{rem:extension}
The geometric assumptions imposed on $\Omega^\varepsilon$, most notably its connectedness, guarantee the existence of an extension operator such that for every $c \in W^{1,2}(\Omega^\varepsilon)$ there exists an extension $\tilde{c}\in W^{1,2}(\Omega)$ satisfying $\tilde{c}=c$ a.e. in  $\Omega^\varepsilon$ and the uniform estimates
\begin{equation}\label{ext_est}
\|\tilde{c}\|_{L^2(\Omega)} \le C \|c\|_{L^2(\Omega^\varepsilon)} 
\quad \text{and} \quad 
\|\nabla \tilde{c}\|_{L^2(\Omega)} \le C \|\nabla c\|_{L^2(\Omega^\varepsilon)},
\end{equation}
where the constant $C > 0$ is independent of $\varepsilon$. For time-dependent functions, this extension is defined pointwise in time, and the linearity of the operator ensures that $\tilde{c} \in L^2(0,T; W^{1,2}(\Omega))$.
\end{rem}

We now state the main result of this section, establishing the existence and uniqueness of a weak solution to \eqref{ISFeps}--\eqref{outereps}, which satisfies a priori estimates that are uniform with respect to \(\varepsilon\).
\begin{thm}\label{wellposednes}
    For every fixed $\varepsilon > 0$, under Assumptions \ref{asagg}--\ref{asinit}, there exists a unique nonnegative weak solution $(\vec{c}_\varepsilon, \vec{r}_\varepsilon)$ to the problem \eqref{ISFeps}--\eqref{outereps}. Moreover, this solution satisfies the following estimates:
\begin{align}
        \|c_i^\varepsilon\|_{L^\infty(0,T; L^2(\Omega^\varepsilon))} + \|\nabla c_i^\varepsilon\|_{L^2((0,T)\times\Omega^\varepsilon)} +  \sqrt{\varepsilon}
        \|c_i^\varepsilon\|_{L^2((0,T) \times \Gamma^\varepsilon_C)} \leq C, \label{estimatec} \\
     \|r_j^\varepsilon\|_{L^\infty((0,T)\times \Gamma_C^\varepsilon)} \leq C, \label{estimater}\\
     \|\partial_t c_i^\varepsilon\|_{L^2(0,T; W^{-1,2}(\Omega^\varepsilon))} +  \|\partial_t r_j^\varepsilon\|_{L^2(0,T; L^2(\Gamma_C^\varepsilon))} \leq C, \label{estimateder}\\
     \|c_i^\varepsilon\|_{L^\infty((0,T)\times \Gamma_C^\varepsilon)} +  \|c_i^\varepsilon\|_{L^\infty((0,T)\times\Omega^\varepsilon)} \leq C, \label{estimatecbetter}
\end{align}
    for $i=1,2,3$, $j=0,1,2,3$, where the constant $C$ is independent of $\varepsilon$. 
\end{thm}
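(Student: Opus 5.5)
We plan to construct the solution by a Schauder fixed-point argument on a truncated problem, then remove the truncation using a priori bounds that are uniform in $\varepsilon$. For $M>0$ large, we replace every occurrence of $c_l^\varepsilon$ inside the nonlinearities $f_{\mathrm{agg},i}$, $f_{\mathrm{bind},i}$, $f_{\mathrm{rec},j}$ by $T_M(c_l^+)$, where $T_M(s)=\min\{s,M\}$, and likewise $r_j$ by $T_M(r_j^+)$. For a given $\bar c\in L^2(0,T;L^2(\Omega^\varepsilon))$, whose trace on $\Gamma_C^\varepsilon$ is understood in $L^2$ via the compactness discussed below, we first solve the receptor ODE system \eqref{receptoreps} pointwise on $\Gamma_C^\varepsilon$. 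Since its right-hand side is globally Lipschitz in $\vec r$ after truncation, Picard's theorem gives a unique solution. Next, with $\vec r$ fixed, we solve the bulk system. It is a semilinear parabolic problem with truncated, hence Lipschitz, reaction and boundary terms, so Galerkin's method yields a unique solution. The map $\bar c\mapsto c$ sends a ball of $L^2(Q^\varepsilon)$ into a set that is bounded in $L^2(W^{1,2})\cap W^{1,2}(W^{-1,2})$. By Aubin--Lions, together with the compactness of the trace from $W^{r,2}$ with $r>1/2$ (cf.\ \eqref{scaledfractrace}), this image is compact. Continuity of the map follows from the Lipschitz structure, so Schauder's theorem gives a fixed point.

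Next we establish nonnegativity. Testing the bulk equation by $-(c_i^\varepsilon)^-$ and the receptor equation by $-(r_j^\varepsilon)^-$, we use the quasi-positivity built into \ref{asagg}--\ref{asbind}. On $\{c_i<0\}$, every term of $f_{\mathrm{agg},i}$ that does not contain the factor $c_i$ is nonnegative, because $\kappa^i_{jl}\ge0$ and $\kappa^i_j\ge0$ for $j\neq i$; the remaining terms are proportional to $c_i$. In $f_{\mathrm{bind},i}$, the part $\beta^i_j a_i s_j$ contains $c_i$ and the part $\gamma^i_j s_j\ge0$. Together with $q_{\mathrm{prod},i}\ge0$ and Gronwall's inequality, this shows $c_i^-=r_j^-=0$. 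The conservation law \eqref{receptorconserved} then gives $\sum_j r_j^\varepsilon(t,x)=\sum_j r_{j,0}^\varepsilon(x)\le C$. Combined with $r_j^\varepsilon\ge0$, this yields \eqref{estimater} independently of $\varepsilon$ and of $c$. As a consequence, $f_{\mathrm{bind},i}\le \sum_j\gamma^i_j r_j\le C$ on $\Gamma_C^\varepsilon$.

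For the energy bound \eqref{estimatec}, we test with the weighted sum $\sum_i K_i c_i^\varepsilon$ only at the level of mass. Testing each equation by $K_i$ and summing, condition \eqref{aggineq} makes the cubic-type symmetric combination $\sum_{i,j,l} K_i\kappa^i_{jl}c_jc_l$ nonpositive: the symmetrisation $\sum_{i\le j\le l}(K_jS^j_{il}+K_iS^i_{jl}+K_lS^l_{ij})c_ic_jc_l$ arises after multiplying by $c$ in the $L^2$ test, and for the $L^1$ test the analogous bound follows from nonnegativity. This controls the total mass. We then derive the $L^2$ estimate by testing with $K_ic_i^\varepsilon$, whose quadratic terms produce exactly the trilinear form that \eqref{aggineq} controls. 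The linear terms are handled by Gronwall, and the boundary terms by the scaled trace inequality \eqref{scaledtrace} with $\delta$ small, using $\varepsilon^2\le 1$. The trace part of \eqref{estimatec} follows from \eqref{scaledtrace} again. The time-derivative bounds \eqref{estimateder} are read off from the weak formulation once \eqref{estimatecbetter} is known, because all nonlinearities are then bounded.

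The main obstacle is \eqref{estimatecbetter}, which we prove by an Alikakos/Moser iteration on $\sum_iK_ic_i^{p}$. Testing by $K_i(c_i^\varepsilon)^{p-1}$, the quadratic aggregation terms do not close by \eqref{aggineq} alone for $p>2$. We therefore use the quadratic growth: they are bounded by $C\int |c|^{p+1}$. With $p_k=2^k$, we interpolate via Gagliardo--Nirenberg applied to the extension $\widetilde{c^{p/2}}$ from Remark~\ref{rem:extension}, whose constants are uniform in $\varepsilon$, and absorb into the gradient term $\frac{4(p-1)}{p^2}\|\nabla c^{p/2}\|^2$. This requires the higher integrability from the previous step, namely $c\in L^\infty L^2\cap L^2W^{1,2}\subset L^{10/3}$, to start the iteration; if necessary we first bootstrap to $L^q$ with $q$ large using the weighted test at each level. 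The boundary source $\varepsilon\int_{\Gamma_N^\varepsilon} q\,c^{p-1}$ is treated by \eqref{scaledtrace} with $\delta\sim 1/p$, which produces polynomial-in-$p$ constants. The capillary flux is handled because $\beta\le0$ and $\gamma r\le C$. Tracking the constants gives $\|c\|_{L^\infty(L^{p_k})}\le (Cp_k)^{a/p_k}\|c\|_{L^\infty(L^{p_{k-1}})}$, so the product of these factors converges. The boundary $L^\infty$ bound then follows from the bulk one, since the trace of a function that is bounded by $C$ a.e.\ is also bounded by $C$. With all quantities bounded, $M$ can be chosen larger than the bounds, so the truncation is inactive. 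Uniqueness follows by subtracting two solutions, testing with the differences, and using the Lipschitz continuity of the nonlinearities on bounded sets, \eqref{scaledtrace}, and Gronwall's inequality.
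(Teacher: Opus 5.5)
Your overall plan is the paper's: Schauder on a cut-off system, nonnegativity by quasi-positivity, the $\varepsilon$-independent receptor bound from nonnegativity plus \eqref{receptorconserved}, the $K_i$-weighted $L^2$ estimate as the seed of a Moser--Alikakos iteration, and uniqueness by Gronwall. Your Gagliardo--Nirenberg iteration in $L^\infty(0,T;L^{p_k})$ is an acceptable substitute for the paper's space--time bootstrap $L^{1+2p}(Q^\varepsilon)\to L^{10p/3}(Q^\varepsilon)$.

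The genuine gap is the removal of the cut-off. You insert $T_M(c_l^+)$ componentwise into $f_{\mathrm{agg},i}$ but test with $K_ic_i$. The cubic contribution is then $\sum_{i,j,l}K_i\kappa^i_{jl}\,c_i\,T_M(c_j^+)T_M(c_l^+)$, and \eqref{aggineq} gives a sign only when the same vector sits in all three slots. In the paper's example ($K_1=1$, $K_2=k_{12}/k_{11}$), the pair that cancels exactly in the original system becomes $k_{12}T_M(c_1)\bigl(c_2T_M(c_1)-c_1T_M(c_2)\bigr)=k_{12}c_1^2(c_2-M)>0$ for $c_1\le M<c_2$. For $c_1=M$, $c_3=0$ the whole cubic contribution equals $M^2\bigl[k_{12}c_2\bigl(1-(1+\alpha)k_{22}/k_{11}\bigr)-(2k_{11}+k_{12})M\bigr]$, which is positive for large $c_2$ whenever $(1+\alpha)k_{22}<k_{11}$. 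So for the truncated solutions you only get the $L^2$ bound, hence the $L^{10/3}$ seed and the $L^\infty$ bound, with $M$-dependent constants. The sentence ``$M$ can be chosen larger than the bounds'' is therefore circular: your $M$-independent bounds are proved for solutions of the untruncated system, which is exactly what the fixed point is supposed to produce.

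The paper avoids this by never cutting off the bulk concentrations at a level. In \eqref{cutISFeps} it only replaces $\vec c_\varepsilon$ by $\vec c_{\varepsilon,+}$, so on the nonnegative approximate solution the $K_i$-weighted test reproduces exactly the symmetric trilinear form. This gives \eqref{bulkestimates} with a constant independent of the cut-off and of the fixed-point input. Only the receptors are truncated, at level $k$, and that truncation is removable because nonnegativity and \eqref{receptorconserved} bound $\vec r_\varepsilon$ independently of $k$.

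Within your own scheme you can repair the argument by testing with $K_iT_M(c_i^+)$ and $K_iT_M(c_i^+)^{p-1}$ instead. Then all three slots carry $T_M(\vec c_+)$ and \eqref{aggineq} applies. The time term is controlled because $\int_0^sT_M(\sigma)\,d\sigma\ge\frac12T_M(s)^2$. An $M$-independent bound $\|T_M(c_i)\|_{\infty}\le C_*<M$ then forces $c_i<M$, so the cut-off is inactive. A common scalar factor $0\le\theta_M(\vec c)\le 1$ multiplying all of $f_{\mathrm{agg}}$ would also preserve the structure.

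Two smaller points:
\begin{enumerate}
\item The fixed-point map must be defined on a set where traces on $\Gamma_C^\varepsilon$ exist, for example a bounded subset of $L^2(0,T;W^{r,2}(\Omega^\varepsilon))$ with $r>1/2$ as in the paper. A ball of $L^2(Q^\varepsilon)$ does not work, because the receptor ODE needs $\bar c$ on $\Gamma_C^\varepsilon$.
\item Your ``mass'' step is false as stated: \eqref{aggineq} does not make $\sum_iK_if_{\mathrm{agg},i}(\vec c,\vec c)$ nonpositive (in the example the coefficient of $c_1^2$ is $k_{12}-2k_{11}$). The step is not needed, however, since the $L^2$ test alone gives \eqref{estimatec}.
\end{enumerate}
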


We outline the proof of Theorem~\ref{wellposednes} by splitting it into several steps. Each of the following lemmata holds for fixed $\varepsilon > 0$. In Lemma~\ref{existencews}, existence of a nonnegative weak solution is obtained via a Schauder's fixed-point argument applied to a suitably truncated system. Lemma~\ref{boundedws} provides $L^\infty$-bounds by an Alikakos iteration. Finally, Lemma~\ref{uniquews} establishes uniqueness of weak solutions by a standard energy estimate and Gronwall’s lemma.
\begin{lemma}\label{existencews}
    Under Assumptions \ref{asagg}--\ref{asinit}, there exists a nonnegative weak solution to the microscopic problem \eqref{ISFeps}--\eqref{receptoreps}. This solution satisfies the uniform estimates \eqref{estimatec}--\eqref{estimateder}.
\end{lemma}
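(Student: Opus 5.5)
We will construct the solution by a Schauder fixed point applied to a truncated system, derive the estimates \eqref{estimatec}--\eqref{estimateder} uniformly in the truncation level and in $\varepsilon$, and then remove the truncation.

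\emph{Truncated problem and fixed-point map.} For $M>0$ let $T_M(s):=\min\{s^+,M\}$. In the nonlinearities $f_{\mathrm{agg},i}$, $f_{\mathrm{bind},i}$ and $f_{\mathrm{rec},j}$ we replace the concentrations $c_l$ by $T_M(c_l)$, and in the gain terms (those with nonnegative coefficients) we also replace the receptor densities $r_j$ by $T_M(r_j)$. The loss terms, i.e.\ the diagonal terms $\kappa^i_{jl}$ with $j=i$ or $l=i$, $\beta^i_j$ in $f_{\mathrm{bind},i}$ and $\lambda^j_{il}$ with $i=j$, keep the unknown itself as a factor.

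Fix $\vec{\bar c}$ in $X:=L^2(0,T;L^2(\Omega^\varepsilon))^3$. Given $\vec{\bar c}$, the receptor system \eqref{receptoreps} becomes a family of ODEs on $\Gamma_C^\varepsilon$, parametrized pointwise, with Lipschitz right-hand side. It has a unique solution $\vec r$. This solution is nonnegative, because the off-diagonal coefficients are nonnegative (quasi-positivity). It also satisfies $\sum_j r_j=\sum_j r_{j,0}\le C$ by \eqref{receptorconserved}, which together with nonnegativity gives \eqref{estimater}.

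We then solve the bulk equations for $\vec c$, with the truncated $\vec{\bar c}$ in the off-diagonal terms. These form a linear parabolic problem with bounded coefficients and Robin-type terms on $\Gamma_C^\varepsilon$ whose sign is favourable ($\beta\le0$). Galerkin or Lions' theorem gives a unique solution in $L^2(0,T;W^{1,2})\cap W^{1,2}(0,T;W^{-1,2})$.

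The map $\vec{\bar c}\mapsto\vec c$ sends a ball of $X$ into a bounded set of this space, which is compact in $X$ by Aubin--Lions. Continuity follows from the Lipschitz structure and the uniqueness of the linear problems. Schauder's theorem therefore yields a fixed point. Testing with $c_i^-$ and using quasi-positivity, $q_{\mathrm{prod},i}\ge0$, $\gamma\ge0$ and $r\ge0$ shows $c_i\ge0$. For this test we use the trace inequality \eqref{scaledtrace} to absorb the boundary terms of the form $\varepsilon\beta r (c^-)^2$; in fact these have a good sign.

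\emph{Uniform estimates.} This is the main step. Once nonnegativity is known, we test the $i$-th equation with $K_i c_i$ and sum over $i$. The quadratic aggregation contribution is
\[
\sum_i K_i c_i\sum_{j,l}\kappa^i_{jl}c_lc_j=\tfrac16\sum_{i,j,l}\bigl(K_iS^i_{jl}+K_jS^j_{il}+K_lS^l_{ij}\bigr)c_ic_jc_l\le0
\]
by \eqref{aggineq} and $c\ge0$; the cubic terms are symmetrized over the permutations of $(i,j,l)$. This identity must hold for the truncated system as well. To ensure it, the truncation should be arranged as $c_i\cdot T_M$ consistently; for instance we truncate with the symmetric factor $c_lc_j\mapsto c_lc_j/(1+\tfrac1M|\vec c|)$. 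This keeps the cancellation and nonnegativity while making the nonlinearity globally Lipschitz-bounded for fixed $M$. Getting this detail right is what I expect to be the main obstacle.

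The linear terms are bounded by $C\sum_i\|c_i\|_{L^2}^2$. In the boundary terms, $\varepsilon\beta c_i r c_i\le0$ can be dropped. The remaining terms $\varepsilon\int_{\Gamma}(\gamma r+q)c_i$ are bounded by $\varepsilon C\int_\Gamma c_i\le C+\varepsilon\|c_i\|^2_{L^2(\Gamma)}$, and \eqref{scaledtrace} with small $\delta$ absorbs the last term into $\|c_i\|^2_{L^2}+\varepsilon^2\delta\|\nabla c_i\|^2_{L^2}$. Here we use $|\Gamma^\varepsilon|\sim\varepsilon^{-1}$ and $\varepsilon\le1$.

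Grönwall's inequality then gives \eqref{estimatec}, including the trace part, with constants independent of $M$ and $\varepsilon$.

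For \eqref{estimateder}, we first observe that the quadratic term is only in $L^1$, so we bound $\partial_t c_i$ in the dual space for fixed $M$ (where the truncation keeps the nonlinearity bounded). The bound $\|\partial_t r_j\|_{L^2(L^2(\Gamma))}$ follows from $|f_{\mathrm{rec}}|\le C(1+|\vec c|)$, since $r$ is bounded, combined with the trace estimate. The $M$-uniform version of the $\partial_t c_i$ bound, and the removal of the truncation, will then rely on the $L^\infty$ bound of Lemma~\ref{boundedws}: once $c$ is bounded by a constant independent of $M$, we choose $M$ larger than that constant, so the truncation is inactive. At that point $f_{\mathrm{agg}}$ is bounded and the $W^{-1,2}$ bound on $\partial_t c_i$ follows directly by duality from the weak formulation \eqref{wfbulk}.
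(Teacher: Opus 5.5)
Your route (linearize, cut off from above, apply Schauder, then remove the cutoff through the $L^\infty$ bound) differs from the paper's. As written it does not close, and the step you flag as the main obstacle is exactly where it breaks.

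\textbf{The truncation.} The map you actually define uses $T_M$ in the gain terms and keeps the unknown in the loss terms. For that system the symmetrization behind \eqref{aggineq} is lost, because different factors of the same cubic monomial are cut at different levels. So you have no $M$-uniform bound for the fixed point you construct.

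Your substitute $c_lc_j/(1+|\vec c|/M)$ restores the cancellation but does not fit the rest of the sketch:
\begin{itemize}
\item It has linear growth, so the frozen gain terms are no longer bounded independently of $\vec{\bar c}$, and invariance of a ball is not justified. You would need a contraction or Leray--Schauder argument instead.
\item The factor $1/(1+|\vec c|/M)$ is $<1$ for every $\vec c\neq0$. So ``choose $M$ above the $L^\infty$ bound, then the truncation is inactive'' fails, and you would have to pass to the limit $M\to\infty$.
\end{itemize}
A cutoff such as $\min\{1,M^2/|\vec c|^2\}\,c_l^+c_j^+$ repairs both points: it is a common nonnegative scalar factor, bounded by $M^2$, and equal to $c_l^+c_j^+$ when $|\vec c|\le M$. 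You must still redo the Moser iteration of Lemma~\ref{boundedws} for the truncated system, because that lemma concerns solutions of the untruncated problem. It does go through, since only quadratic growth enters.

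\textbf{The fixed-point space.} Independently, your map is not well defined on $X=L^2(0,T;L^2(\Omega^\varepsilon))^3$. The receptor ODE needs the trace of $\vec{\bar c}$ on $\Gamma_C^\varepsilon$, and bulk $L^2$-convergence gives no convergence of traces, so continuity of $\vec{\bar c}\mapsto\vec r$ also fails. This is why the paper works in $L^2(0,T;W^{r,2}(\Omega^\varepsilon))$ with $r\in(\frac12,1)$. That space has a continuous trace on $\Gamma_C^\varepsilon$, and the solution space embeds into it compactly by Aubin--Lions.

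\textbf{How the paper avoids this.} The paper never cuts $c$ from above. Only positive parts enter $f_{\mathrm{agg}}$, which keeps quasi-positivity and gives $\sum_iK_if_{\mathrm{agg},i}(\vec c_+,\vec c_+)c_i\le C|\vec c|^2$ for every real $\vec c$. The fixed-point map freezes only the argument $\vec d$ of $f_{\mathrm{rec}}$ and keeps the full nonlinear bulk system. Hence the $K_i$-weighted energy bound is independent of $\vec d$ and gives $F(\mathcal A)\subset\mathcal A$ directly.

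The only cutoff is $\mathcal T_k^+$ on $\vec r$ inside $f_{\mathrm{rec}}$. It is harmless because \eqref{receptorconserved} holds for every argument $\vec s$, so $\sum_jr_j$ is exactly conserved and $\vec r$ is bounded independently of $k$. With your cutoff on gain terms only, you get merely $\frac{d}{dt}\sum_jr_j\le0$. That suffices, but it contradicts the equality you claim.

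\textbf{The time-derivative bounds.} No $L^\infty$ bound on $c$ is needed for \eqref{estimateder}. The quadratic terms are not just in $L^1$: interpolating $L^\infty(L^2)$ and $L^2(L^6)$, they are bounded in $L^2(0,T;L^{6/5}(\Omega^\varepsilon))\subset L^2(0,T;W^{-1,2}(\Omega^\varepsilon))$, uniformly in $\varepsilon$ via the extension operator.

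Finally, for $\partial_t r_j$ your argument, like the paper's, only yields $\sqrt{\varepsilon}\,\|\partial_t r_j^\varepsilon\|_{L^2(0,T;L^2(\Gamma_C^\varepsilon))}\le C$, because $|\Gamma_C^\varepsilon|\sim\varepsilon^{-1}$. That weighted form is what Theorem~\ref{def:manifoldtwoscale} requires.
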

\begin{proof}
The proof relies on a standard approximation procedure for nonlinear systems. To handle the nonlinear coupling in the bulk and the surface, we first introduce a truncated problem.
More precisely, for any $k>0$, we define the cut-off function
\begin{align}\label{tk}
    &\tk^+:\rr \to [0,k], &\tk^+(z) := \max\{0,\min\{z,k\}\}.
\end{align}
We then modify the receptor equation \eqref{receptoreps} by applying the cut-off to the receptor concentrations:
\begin{align}
\partial_t r^\varepsilon_j
= f_{\mathrm{rec},j}(\vec{c}_\varepsilon, \tk^+(\vec{r}_\varepsilon))
\qquad \text{on } \Gamma^\varepsilon_C, \label{cutreceptoreps}
\end{align}
where the truncation is understood component-wise. 
We further modify the bulk equations \eqref{ISFeps} by truncating the negative part of bulk concentrations $c_i^+ := \max\{0,c_i\}$ in the nonlinear aggregation term. We consider 
\begin{align}
    \partial_t c^\varepsilon_i 
            - \nabla \cdot (D_i \nabla c^\varepsilon_i) 
            = f_{\mathrm{deg},i}(c^\varepsilon_i) 
            +  f_{\mathrm{agg},i}(\vec{c}_{\varepsilon,+}, \vec{c}_{\varepsilon,+})
            \qquad \text{in } \Omega^\varepsilon \label{cutISFeps},
\end{align}
where the positive part is understood component-wise.

Let $r \in \left(\tfrac{1}{2},1\right)$ be fixed and define
    \begin{align}\label{seta}
        \mathcal{A} := \Big\{ \vec{d}_\varepsilon \in \left(L^2\big(0,T; W^{r,2}(\Omega^\varepsilon)\big)\right)^3 : \; \|\vec{d}_\varepsilon\|_{L^2(W^{r,2}(\Omega^\varepsilon))} \leq C_{ \mathcal{A} }, \;
        d_i^\varepsilon \ge 0 \ \text{a.e. in } (0,T)\times\Omega^\varepsilon,\; i=1,2,3 \Big\},
    \end{align}
    where the constant $C_{ \mathcal{A} }$ will be specified later.
    To prove the existence of a solution to the system \eqref{cutISFeps}, \eqref{productioneps}--\eqref{uptakeeps}, \eqref{cutreceptoreps}, we employ a fixed-point argument. 
   More precisely, for a given $\vec{d}_\varepsilon \in \mathcal{A}$, we introduce the operator $F : \mathcal{A} \to \mathcal{A}$ by setting
    $ F(\vec{d}_\varepsilon):= \vec{c}_\varepsilon $, where $\vec{c}_\varepsilon$ solves the system
    \begin{equation}\label{systemfixed}
        \begin{aligned}
            \partial_t c^\varepsilon_i 
            - \nabla \cdot (D_i \nabla c^\varepsilon_i) 
            &= f_{\mathrm{deg},i}(c^\varepsilon_i) 
            +  f_{\mathrm{agg},i}(\vec{c}_{\varepsilon,+}, \vec{c}_{\varepsilon,+})
            && \text{in } \Omega^\varepsilon, \\
            D_i \nabla c^\varepsilon_i \cdot\bn
            &= \varepsilon q_{\mathrm{prod},i}
            && \text{on } \Gamma^\varepsilon_N, \\
            D_i \nabla c^\varepsilon_i \cdot\bn
            &= \varepsilon f_{\mathrm{bind},i}(\vec{c}_\varepsilon, \vec{r}_\varepsilon)
            && \text{on } \Gamma^\varepsilon_C, \\
            \partial_t r^\varepsilon_j
            &= f_{\mathrm{rec},j}\big(\vec{d}_\varepsilon, \tk^+(\vec{r}_\varepsilon)\big)
            && \text{on } \Gamma^\varepsilon_C,
        \end{aligned}
    \end{equation}
    for $i=1,2,3$ and $j=0,1,2,3$, supplemented with the initial conditions \eqref{IC}. In this formulation, the receptor evolution no longer depends on $\vec{c}_\varepsilon$, which allows us to solve for $\vec{r}_\varepsilon$ independently. 

For a given $\vec{d}_\varepsilon \in \mathcal{A}$, we seek $\vec{r}_\varepsilon$ satisfying
\begin{equation}\label{wftkrec}
\int_{\Gamma_C^\varepsilon} \partial_t r^\varepsilon_j  \psi_j \ds
= \int_{\Gamma_C^\varepsilon} f_{\mathrm{rec},j}\big(\vec{d}_\varepsilon, \tk^+(\vec{r}_\varepsilon)\big) \psi_j \ds,
\quad \forall \psi_j \in L^2(0,T; L^2(\Gamma_C^\varepsilon)), \; j=0,1,2,3.
\end{equation}
We formally test \eqref{wftkrec} by $\varepsilon r_j^\varepsilon$, sum over $j$, and invoke Assumption~\ref{asrec}. Combining Hölder’s inequality, Young’s inequality, and the scaled trace inequality  \eqref{scaledtrace} yields the following a priori estimate
\begin{equation}\label{aprioritkrec}
     \frac{d}{dt} \frac{1}{2} \varepsilon \|\vec{r}_\varepsilon\|_{L^2(\Gamma_C^\varepsilon)}^2\leq \varepsilon k \int_{\Gamma_C^\varepsilon} \left(1+|\vec{d}_\varepsilon|\right) |\vec{r}_\varepsilon| \, dS \leq C(k) \varepsilon \|\vec{r}_\varepsilon\|_{L^2(\Gamma_C^\varepsilon)}^2 + C(k)(1+\|\vec{d}_\varepsilon\|_{W^{r,2}(\Omega^\varepsilon)}^2).
\end{equation}
An application of Gronwall’s lemma yields $\sqrt{\varepsilon}\|\vec{r}_\varepsilon\|_{L^\infty(L^2(\Gamma_C^\varepsilon))} \leq C(k) $, where the constant $C$ is independent of $\varepsilon$, but depends on $k$ and blows up as $k \to \infty$.  
Next, formally testing \eqref{wftkrec} with $\psi_j = \partial_t r_j^\varepsilon$, summing over $j$, and using the previous bound, we obtain $\|\dert \vec{r}_\varepsilon\|_{L^2(L^2(\Gamma_C^\varepsilon))} \leq C(k)$. The above a priori estimates allow one to rigorously establish the existence of a solution $\vec{r}_\varepsilon$ to \eqref{wftkrec} via a standard Galerkin approximation argument, which we omit for brevity.

To show nonnegativity, we test \eqref{wftkrec} with $r_j^{\varepsilon,-}:=\min\{0,r_j\}$. We have 
\begin{equation}\label{nonnegativerk}
    \frac{d}{dt} \|r_j^{\varepsilon,-}\|_{L^2(\Gamma_C^\varepsilon)}^2 = \sum_{i=0}^3 \int_{\Gamma_C^\varepsilon}\left(\sum_{l=1}^3\lambda^j_{il}d^\varepsilon_l + \lambda^j_i \right) \tk^+(r_i^\varepsilon) r_j^{\varepsilon, -} \, dS \leq  \int_{\Gamma_C^\varepsilon}\left(\sum_{l=1}^3\lambda^j_{jl}d^\varepsilon_l + \lambda^j_j \right) \tk^+(r_j^\varepsilon) r_j^{\varepsilon, -} \, dS = 0,
\end{equation}
where the inequality follows from the nonnegativity of $\lambda_{il}^j$ and $\lambda_i^j$ for $i\ne j$, stated in Assumption~\ref{asrec}, together with the nonnegativity of the components of $\vec{d}_\varepsilon$.
Exploiting the nonnegativity of the initial data given in Assumption~\ref{asinit} together with Gronwall’s lemma, we deduce \[\|\vec{r}_{\varepsilon,-}\|_{L^\infty(L^2(\Gamma_C^\varepsilon))}^2\leq 0,\] which implies $r_j^\varepsilon \geq 0$ almost everywhere on $\Gamma_N^\varepsilon \times (0,T)$ for all $j$. Finally, by Assumption~\ref{asrec}, the total receptor density is conserved, i.e.,  \(\dert \sum_{j=0}^3 r_j=0.\) Combined with the nonnegativity of each component and the boundedness of initial data given by Assumption~\ref{asinit}, this yields the uniform bound 
\begin{equation}
    \|\vec{r}_\varepsilon\|_{L^\infty((0,T)\times \Gamma_C^\varepsilon)} \leq C \|\vec{r}_{\varepsilon,0}\|_{L^\infty(\Gamma_C^\varepsilon)} \leq C,
\end{equation} with $C$ independent of $\varepsilon$ or the choice of $\vec{d}_\varepsilon$. 

Next, for the previously constructed $\vec{r}_\varepsilon$, we seek $\vec{c}_\varepsilon$ solving
\begin{align}\label{wftkbulk}
\int_0^T \!\!&\left\langle\partial_t c_i^\varepsilon, \varphi_i \right\rangle \, dt 
 + \int_0^T \!\!\int_{\Omega^\varepsilon} D_i \nabla c_i^\varepsilon \cdot \nabla \varphi_i \, dx\,dt
= \varepsilon \int_0^T \!\!\int_{\Gamma_N^\varepsilon} q_{\mathrm{prod},i} \, \varphi_i \, dS\,dt \\&
 + \varepsilon \int_0^T \!\!\int_{\Gamma_C^\varepsilon} f_{\mathrm{bind},i}(\vec{c}_\varepsilon,\vec{r}_\varepsilon) \, \varphi_i \, dS\,dt +\int_0^T \!\!\int_{\Omega^\varepsilon} \bigl( f_{\mathrm{deg},i}(c_i^\varepsilon)
+ f_{\mathrm{agg},i}(\vec{c}_{\varepsilon,+}, \vec{c}_{\varepsilon,+}) \bigr) \varphi_i \, dx\,dt
\nonumber,
\end{align}
for $\varphi_i \in L^2(0,T; W^{1,2}(\Omega^\varepsilon))$ and $i=1,2,3$.
To show the existence of such $\vec{c}_\varepsilon$, we start by formally testing \eqref{wftkbulk} with $\varphi_i = c_i^\varepsilon$. Using Assumption~\ref{asdeg}, we obtain the following identity
\begin{equation}\label{cexist}
    \begin{aligned}
        \frac{1}{2}\frac{d}{dt}\|c_i^\varepsilon\|_{L^2(\Omega^\varepsilon)}^2 + D_i\|\nabla c_i^\varepsilon&\|_{L^2(\Omega^\varepsilon)}^2 +  \delta_i\|c_i^\varepsilon\|_{ L^2(\Omega^\varepsilon)}^2\\
&= \varepsilon \int_{\Gamma_N^\varepsilon} q_{\mathrm{prod},i} c_i^\varepsilon \, dS 
 + \varepsilon \int_{\Gamma_C^\varepsilon} f_{\mathrm{bind},i}(\vec{c}_\varepsilon,\vec{r}_\varepsilon) c_i^\varepsilon \, dS +\int_{\Omega^\varepsilon}
 f_{\mathrm{agg},i}(\vec{c}_{\varepsilon,+}, \vec{c}_{\varepsilon,+})  c_i^\varepsilon \, dx.
    \end{aligned}
\end{equation}
By Assumption~\ref{asbind} and the nonnegativity of $r^\varepsilon_j$, we have
\[\int_{\Gamma_C^\varepsilon} f_{\mathrm{bind},i}(\vec{c}_\varepsilon,\vec{r}_\varepsilon) c_i^\varepsilon \, dS \leq  \int_{\Gamma_C^\varepsilon} \sum_{j=0}^3 \gamma^i_j |r^\varepsilon_j| |c_i^\varepsilon| \, dS. \] 

We now make use of the inequality \eqref{aggineq}, formulated in Assumption~\ref{asagg}. We multiply \eqref{cexist} by $K_i$ and sum over $i$. For the aggregation term, we have
\begin{align*}
   &\sum_{i,j=1}^3 \int_{\Omega^\varepsilon} K_i \left( \sum_{l=1}^3   \kappa^i_{jl} c_l^{\varepsilon,+}
        + \kappa^i_j\right) c_j^{\varepsilon,+}c_i^{\varepsilon} \dx  \leq \sum_{i,j=1}^3 \int_{\Omega^\varepsilon} K_i \left( \sum_{l=1}^3   \kappa^i_{jl} c_l^{\varepsilon,+}
        + \kappa^i_j\right) c_j^{\varepsilon,+}c_i^{\varepsilon,+} \dx\\
        & = \frac{1}{6}\sum_{i,j,l=1}^3 \int_{\Omega^\varepsilon} (K_j S^j_{il} + K_i S^i_{jl} + K_l S^l_{ij})  c_l^{\varepsilon,+}c_j^{\varepsilon,+} c_i^{\varepsilon,+} \dx + \sum_{i,j=1}^3 \int_{\Omega^\varepsilon} K_i
        \kappa^j_i c_j^{\varepsilon,+}c_i^{\varepsilon,+} \dx  \\ &\leq \sum_{i,j=1}^3 \int_{\Omega^\varepsilon} K_i
        \kappa^j_i c_j^{\varepsilon,+}c_i^{\varepsilon,+} \dx\leq \sum_{i,j=1}^3 \int_{\Omega^\varepsilon} K_i
        \kappa^j_i |c_j^{\varepsilon}||c_i^{\varepsilon}|. 
\end{align*}
 We thus arrive at 
\begin{multline*}
\sum_{i=1}^3K_i\left(\frac{1}{2}\frac{d}{dt}\|c_i^\varepsilon\|_{L^2(\Omega^\varepsilon)}^2 + D_i\|\nabla c_i^\varepsilon\|_{L^2(\Omega^\varepsilon)}^2 +  \delta_i\|c_i^\varepsilon\|_{ L^2(\Omega^\varepsilon)}^2\right) \\
\leq \sum_{i=1}^3\varepsilon K_i  \int_{\Gamma_N^\varepsilon} |q_{\mathrm{prod},i}|| c_i^\varepsilon| \, dS 
 + \sum_{i=1}^3\!\sum_{j=0}^3\varepsilon K_i \int_{\Gamma_C^\varepsilon}  |r^\varepsilon_j| |c_i^\varepsilon| \, dS +\sum_{i,j=1}^3 K_i
        \kappa^j_i \int_{\Omega^\varepsilon}  |c_j^{\varepsilon}||c_i^{\varepsilon}| \dx.
\end{multline*}
We now estimate the right-hand side. The bulk reaction term is estimated directly using the vector 
norm rather than a component-wise estimate, giving 
$\sum_{i,j=1}^3 K_i\kappa_i^j\int_{\Omega^\varepsilon}|c_j^\varepsilon||c_i^\varepsilon|\,dx 
\le C\|\vec c_\varepsilon\|^2_{L^2(\Omega^\varepsilon)}$. 
For the boundary integral over $\Gamma_N^\varepsilon$, we split the integrand as 
$|q_{\mathrm{prod},i}||c_i^\varepsilon| \le |q_{\mathrm{prod},i}|^2 + |c_i^\varepsilon|^2$. The 
resulting data term is bounded by Assumption \ref{asprod} together with the uniform 
surface-measure estimate $\varepsilon|\Gamma_N^\varepsilon| \le C$, while the resulting term, 
$\varepsilon\|c_i^\varepsilon\|^2_{L^2(\Gamma_N^\varepsilon)}$, is estimated via the scaled trace 
inequality \eqref{scaledtrace}, with a constant independent of $\varepsilon$. The integral over 
$\Gamma_C^\varepsilon$ is estimated analogously. Altogether, this yields
\begin{equation*}
\frac{d}{dt}\|\vec{c}_\varepsilon\|_{L^2(\Omega^\varepsilon)}^2 +\| \vec{c}_\varepsilon\|_{W^{1,2}(\Omega^\varepsilon)}^2 
\leq C\left(\|\vec{q}_{\mathrm{prod}}\|^2_{L^\infty((0,T)\times\Gamma_N^\varepsilon)} +\|\vec{r}_{\varepsilon}\|^2_{L^\infty((0,T)\times \Gamma_C^\varepsilon)} \right) 
 +  \delta \|\vec{c}_\varepsilon\|_{W^{1,2}(\Omega^\varepsilon)}^2 + C(\delta) \|\vec{c}_\varepsilon\|_{L^2(\Omega^\varepsilon)}^2
\end{equation*}
for arbitrary $\delta>0$. Choosing $\delta$ sufficiently small and applying Gronwall’s lemma establishes
\begin{equation}\label{bulkestimates}
    \|\vec{c}_\varepsilon\|_{L^\infty(L^2(\Omega^\varepsilon))} + \|\vec{c}_\varepsilon\|_{L^2(W^{1,2}(\Omega^\varepsilon))} \leq C_1,
\end{equation}
where the constant $C_1$ doesn't depend on $\varepsilon$ nor the choice of $\vec{d}_\varepsilon$.
Moreover, \eqref{bulkestimates} implies the bound \[\|\dert \vec{c}_\varepsilon\|_{L^2(W^{-1,2}(\Omega^\varepsilon))} \leq C\] by standard arguments, where $C$ is again independent of $\varepsilon$ and the choice of $\vec{d}_\varepsilon$.  These a priori estimates ensure the existence of a solution $\vec{c}_\varepsilon$ to \eqref{wftkbulk} via a Galerkin approximation.

Let us now show that any solution of \eqref{wftkbulk} is nonnegative. 
We test \eqref{wftkbulk} with $\varphi_i =c_i^{\varepsilon,-}$. The sign of the degradation function is given by Assumption \ref{asdeg}, and it thus provides an $L^2$ estimate on the bulk concentration. The boundary terms are nonpositive by the choice of coefficients made in Assumptions \ref{asbind} and \ref{asprod}, and the already established nonnegativity of $r^\varepsilon_j$ for all $j$. We have
\begin{equation}\label{nonnegativeck}
\begin{aligned}
    \frac{1}{2}\frac{d}{dt} \|c_i^{\varepsilon,-}\|_{L^2(\Omega^\varepsilon)}^2 +  & D_i\|\nabla c_i^{\varepsilon,-}\|_{ L^{2}(\Omega^\varepsilon)}^2   +  \delta_i\| c_i^{\varepsilon,-}\|_{ L^{2}(\Omega^\varepsilon)}^2\\&\leq \int_{\Omega^\varepsilon} f_{\mathrm{agg},i}(\vec{c}_{\varepsilon,+}, \vec{c}_{\varepsilon,+})c_i^{\varepsilon,-}  \dx \leq \int_{\Omega^\varepsilon} \left(\sum_{l=1}^3 \kappa^j_{il} c_l^{\varepsilon,+} 
        + \kappa^j_i \right)c_i^{\varepsilon,+}c_i^{\varepsilon,-} \dx = 0
,\end{aligned}
\end{equation}
where the inequality follows from the nonnegativity of $\kappa_{jl}^i$ and $\kappa_j^i$ for $i\ne j$ given by Assumption~\ref{asagg}. 
Invoking Assumption~\ref{asinit} together with Gronwall’s lemma, we deduce \[\|c_i^{\varepsilon,-}\|_{L^\infty(L^2(\Omega^\varepsilon))}^2 + \|c_i^{\varepsilon,-}\|_{L^2(W^{1,2}(\Omega^\varepsilon))}^2\leq 0,\]
which implies $c_i^\varepsilon \geq 0$ almost everywhere on $\Omega^\varepsilon \times (0,T)$ for all $i$.

Hence, selecting $ C_{ \mathcal{A} }\geq C_1$ in \eqref{seta}, with $C_1$ as in \eqref{bulkestimates}, ensures $F(\mathcal{A})\subset \mathcal{A}$. 
By the Rellich-Kondrachov theorem (see Theorem 6.3 in \cite{Adams2003}) we have the compact embedding \(W^{1,2}(\Omega^\varepsilon) \hookrightarrow \hookrightarrow W^{r,2}(\Omega^\varepsilon)\).
Hence, applying the Aubin--Lions lemma \cite{Si1987}, 
\[
\left\{\vec{c}_\varepsilon \in L^2(0,T;W^{1,2}(\Omega^\varepsilon))^3 \; : \; \partial_t \vec{c}_\varepsilon \in L^2(0,T;W^{-1,2}(\Omega^\varepsilon))^3 \right\}
\hookrightarrow \hookrightarrow
L^2(0,T;W^{r,2}(\Omega^\varepsilon))^3,
\quad r < 1,
\]
 implies that $F$ is continuous and compact.
Since $\mathcal{A}$ is convex, closed, and bounded in $L^2(0,T;W^{r,2}(\Omega^\varepsilon))^3$, Schauder’s fixed point theorem (see e.g. \cite{brezis2011}) guarantees the existence of a fixed point $F(\vec{c}_\varepsilon) = \vec{c}_\varepsilon$. This fixed point provides a weak solution to the truncated system \eqref{cutISFeps},\eqref{productioneps},\eqref{uptakeeps}, \eqref{cutreceptoreps}. Moreover, this solution fulfills the uniform estimates \eqref{estimatec}--\eqref{estimateder}.

Owing to the nonnegativity of $r_j^\varepsilon$ and $c_i^\varepsilon$, together with the uniform $L^\infty$ bounds on $r_j^\varepsilon$, one may choose $k>0$ sufficiently large such that the truncation in \eqref{cutreceptoreps} is never attained. Consequently, the solution of the approximate problem \eqref{cutISFeps}, \eqref{productioneps}, \eqref{uptakeeps}, \eqref{cutreceptoreps} coincides with the solution of the original system \eqref{ISFeps}--\eqref{receptoreps}.
\end{proof}


\begin{lemma}\label{boundedws}
    Under Assumptions \ref{asagg}--\ref{asinit}, the weak solutions of \eqref{ISFeps}--\eqref{receptoreps}
     satisfy $c_i^\varepsilon\in L^\infty(Q^\varepsilon)$. Moreover, if the solution fulfills \eqref{estimatec}--\eqref{estimater}, then the improved estimate \eqref{estimatecbetter} holds.
\end{lemma}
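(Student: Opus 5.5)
The plan is to run an Alikakos/Moser iteration on the weighted sum $u:=\sum_{i=1}^3 K_i c_i^\varepsilon$ rather than on the individual components. Working with $u$ lets the dissipativity condition \eqref{aggineq} absorb the cubic contributions, so that no superlinear term has to be controlled at any stage. Since $c_i^\varepsilon\ge0$ by Lemma~\ref{existencews} and $K_i>0$, it suffices to bound $\|u\|_{L^\infty(Q^\varepsilon)}$, because $0\le c_i^\varepsilon\le u/K_i$.

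\textbf{Energy inequality.} For $p\ge2$ I test \eqref{wfbulk} with $\varphi_i=K_i\,\mathcal{T}(u)^{p-1}$ and sum over $i$. Here $\mathcal T$ is a truncation at level $M$, which makes the test function admissible and is removed at the end by monotone convergence. The goal is an inequality of the form
\[
\frac1p\frac{d}{dt}\int_{\Omega^\varepsilon}u^p\,dx+\frac{c_0(p-1)}{p^2}\int_{\Omega^\varepsilon}|\nabla u^{p/2}|^2\,dx\le \int_{\Omega^\varepsilon}\Big(\sum_i K_if_{\mathrm{agg},i}(\vec c_\varepsilon,\vec c_\varepsilon)\Big)u^{p-1}dx+\varepsilon\!\int_{\Gamma_N^\varepsilon}\!C\,u^{p-1}dS+\varepsilon\!\int_{\Gamma_C^\varepsilon}\!C\,u^{p-1}dS .
\]
\emph{Diffusion term.} With unequal diffusivities, $\sum_iK_iD_i\nabla c_i\cdot\nabla(u^{p-1})$ is not $\min_i D_i\,|\nabla u|^2u^{p-2}$. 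This is the main obstacle. To deal with it I would first try the alternative test $\varphi_i = K_i (K_i c_i)^{p-1}$, i.e.\ iterate on $\sum_i (K_ic_i)^p$, which gives the clean diffusion terms $D_i(p-1)K_i^p c_i^{p-2}|\nabla c_i|^2$. The quadratic aggregation part then produces $\sum_i K_i^p\sum_{j,l}\kappa^i_{jl}c_lc_jc_i^{p-1}$. In this sum, the negative diagonal terms $\kappa^i_{ii}\le0$ and the mixed terms with $i\in\{j,l\}$ must dominate the positive off-diagonal contributions, which are of the type $\kappa^i_{jj}c_j^2c_i^{p-1}$ with $i\ne j$. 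To see this, I would use Young's inequality, $c_j^2c_i^{p-1}\le \tfrac{2}{p+1}c_j^{p+1}+\tfrac{p-1}{p+1}c_i^{p+1}$, and check that the pairwise conditions in \eqref{aggineq} (cases $i=j=l$ and $i=j\ne l$) still give a nonpositive total for a $p$-dependent choice of weights. If this fails, the fallback is the crude route: bound all cubic terms by $C u^{p+1}$ and use the already-established $L^2(W^{1,2})$ and $L^\infty(L^2)$ bounds \eqref{estimatec}. By Sobolev embedding these give $c_i\in L^{10/3}(Q^\varepsilon)$ uniformly, via the extension operator of Remark~\ref{rem:extension}. This integrability exceeds the critical $L^{5/2}=L^{(n+2)/2}$ for the potential $V\sim u$ in $u_t-\Delta u\le Vu$, so a De Giorgi argument with $V\in L^{q}$, $q>(n+2)/2$, closes.

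\textbf{Boundary terms and iteration.} The $\Gamma_C^\varepsilon$-flux is bounded by $C\sum_j r_j^\varepsilon$ times $u^{p-1}$, because the $\beta$ terms are nonpositive and $r_j^\varepsilon$ is bounded by \eqref{estimater}. The $\Gamma_N^\varepsilon$-flux is bounded by Assumption~\ref{asprod}. Both are treated by writing $u^{p-1}\le 1+u^p$ and applying the scaled trace inequality \eqref{scaledtrace} to $w=u^{p/2}$, with $\delta\sim 1/p^2$ chosen to absorb the gradient. This costs a factor $Cp^2$ in front of $\|u^{p/2}\|_{L^2}^2$, with $C$ independent of $\varepsilon$. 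Next I apply the Gagliardo--Nirenberg inequality on $\Omega$ to the extension $\widetilde{u^{p/2}}$, whose constants are $\varepsilon$-independent by \eqref{ext_est}. Following Alikakos with $p_k=2^k$, this gives $\sup_t\int u^{p_k}\le (Cp_k^{a})\max\{1,\sup_t\int u^{p_{k-1}}\}^2$. Iterating yields $\|u\|_{L^\infty(Q^\varepsilon)}\le C$, with $C$ depending only on $T$, the data and the bounds \eqref{estimatec}--\eqref{estimater}, and hence not on $\varepsilon$.

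\textbf{Trace bound.} The bound on $\Gamma_C^\varepsilon$ in \eqref{estimatecbetter} follows because the trace of a $W^{1,2}$ function that is bounded a.e.\ by $M$ is also bounded by $M$, for instance by applying the trace to $(u-M)^+=0$. For fixed $\varepsilon$ without uniform control, the same argument gives the qualitative statement $c_i^\varepsilon\in L^\infty(Q^\varepsilon)$.
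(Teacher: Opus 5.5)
The check you leave open fails, and with it the premise of your primary route. Condition \eqref{aggineq} says exactly that the cubic form $\sum_i K_i f_{\mathrm{agg},i}(\vec c,\vec c)\,c_i$ is nonpositive for $\vec c\ge0$. It is tailored to the test functions $K_ic_i$. There, for instance, the monomial $c_1^2c_2$ generated by $K_2\kappa^2_{11}c_1^2\cdot c_2$ is compensated by the same monomial generated by $K_1S^1_{12}c_1c_2\cdot c_1$.

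The condition gives no sign to $\sum_iK_if_{\mathrm{agg},i}\,u^{p-1}$. In the paper's Example, with the weights given there, the $c_1^2$-coefficient of $\sum_iK_if_{\mathrm{agg},i}$ is $k_{12}-2k_{11}$, which can be positive.

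With the componentwise test $K_i(K_ic_i)^{p-1}$, the compensating pair splits for $p\neq2$ into the distinct monomials $c_1^2c_2^{p-1}$ and $c_1^pc_2$. No $p$-dependent choice of weights repairs this. In the same Example, $f_{\mathrm{agg},3}=\alpha k_{22}c_2^2-\gamma_{30}c_3$ has no negative quadratic part. So on $\{c_1=0\}$ the positive term $K_3^p\alpha k_{22}c_2^2c_3^{p-1}$ would have to be dominated by $K_2^p\big((1+\alpha)k_{22}c_2^{p+1}+k_{23}c_2^pc_3\big)$, which is false for $p>2$ as $c_3/c_2\to\infty$. Superlinear terms therefore survive the testing. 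Your doubling recursion $\sup_t\int u^{p_k}\le Cp_k^a\max\{1,\sup_t\int u^{p_{k-1}}\}^2$ was derived assuming they do not, so it is unjustified.

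What proves the lemma is your fallback, and that is the paper's argument. There, \eqref{aggineq} is used only for the energy bound \eqref{estimatec}. The $L^\infty$ bound relies only on quadratic growth plus the uniform $L^{10/3}(Q^\varepsilon)$ bound obtained through the extension operator, which lies above the critical exponent $5/2$.

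The fallback must be run on the coupled system, not through a scalar inequality $u_t-\Delta u\le Vu$. That inequality fails for the weighted sum because the $D_i$ differ. It also fails componentwise, since e.g.\ $f_{\mathrm{agg},2}$ contains $\kappa^2_{11}c_1^2$, which is not bounded by $Vc_2$.

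The paper proceeds as follows:
\begin{enumerate}
\item It tests the $i$-th equation with $(c_i^\varepsilon)^{2p-1}$ and sums over $i$.
\item It bounds every cubic contribution by $\|\vec c_\varepsilon\|^{1+2p}_{L^{1+2p}(Q^\varepsilon)}$.
\item It treats the boundary terms with the scaled trace inequality applied to $(c_i^\varepsilon)^p$.
\item It uses the parabolic embedding to get $\|c_i^\varepsilon\|_{L^{10p/3}(Q^\varepsilon)}\le(Cp)^{1/p}\big(1+\|\vec c_\varepsilon\|^{1+1/(2p)}_{L^{1+2p}(Q^\varepsilon)}\big)$.
\item It iterates along $1+2p_k=\tfrac{10}{3}p_{k-1}$ with $p_0=\tfrac76$, so each step gains a factor $5/3$ in integrability.
\end{enumerate}

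Your doubling scheme can also be rescued. Estimate $\int|\vec c|\,c_k^p\le\|\vec c\|_{L^2}\|c_k^{p/2}\|_{L^4}^2$ and absorb the result with Gagliardo--Nirenberg; this works because $V=|\vec c_\varepsilon|\in L^\infty(0,T;L^2)$ is also supercritical in three dimensions. But that step must be carried out explicitly.

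Your handling of the $\Gamma_C^\varepsilon$-flux, discarding the $\beta$-terms by sign, is fine. So is the trace bound via $(c_i^\varepsilon-M)^+$.
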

\begin{proof}
First, let us note that the embedding
\[
L^\infty(0,T;L^2(\Omega^\varepsilon)) \cap L^2(0,T;W^{1,2}(\Omega^\varepsilon)) \hookrightarrow L^{\frac{10}{3}}(Q^\varepsilon)
\]
together with the regularity \eqref{wfspace} required in Definition~\ref{weaksoldef} already guarantees that
\begin{align}\label{startalikakosweak}
\|c_i^\varepsilon\|_{L^{\frac{10}{3}}(Q^\varepsilon)} \le C(\varepsilon)
\end{align}
for every weak solution and each $i=1,2,3$, with a constant that a priori depends on $\varepsilon$. If, in addition, the solution satisfies the uniform estimate \eqref{estimatec}, then 
\begin{align}\label{startalikakos}
      \|c_i^\varepsilon\|_{L^{\frac{10}{3}}(Q^\varepsilon)} \le C
\end{align}
for all $i$, uniformly in $\varepsilon$. 
We improve this bound on $c_i^\varepsilon$ using a Moser--Alikakos iteration \cite{Alikakos1979}.
The main idea is to iteratively increase the integrability exponent by testing the weak formulation \eqref{wfbulk} with powers of the solution.

More precisely, for $p > 1$, we take $(c_i^\varepsilon)^{2p-1}$ as a test function in \eqref{wfbulk} and sum over $i$. Invoking Assumptions \ref{asagg}--\ref{asdeg}, 
 applying Hölder's inequality to the aggregation term and estimating the boundary terms by the boundedness of $\vec{q}_{\mathrm{prod}}$ and $\vec{r}_\varepsilon$, we obtain
\begin{align*}
\sum_{i=1}^3 &\left(\frac{1}{2p}\frac{d}{dt}\|c_i^\varepsilon\|_{L^{2p}(\Omega^\varepsilon)}^{2p} + \frac{2p-1}{p^2}D_i\|\nabla (c_i^\varepsilon)^p\|_{L^2(\Omega^\varepsilon)}^2 +  \delta_i\|c_i^\varepsilon\|_{ L^{2p}(\Omega^\varepsilon)}^{2p}\right) \\
\leq &C\left(\sum_{i,j,k=1}^3\|c_j^\varepsilon\|_{L^{1+2p}(\Omega^\varepsilon)}\|c_k^\varepsilon\|_{L^{1+2p}(\Omega^\varepsilon)}\|c_i^\varepsilon\|_{L^{1+2p}(\Omega^\varepsilon)}^{2p-1} + \sum_{i,j=1}^3\|c_j^\varepsilon\|_{L^{2p}(\Omega^\varepsilon)}\|c_i^\varepsilon\|_{L^{2p}(\Omega^\varepsilon)}^{2p-1}\right) \\
&+\varepsilon C \sum_{i=1}^3\left(\|\vec{q}_{\mathrm{prod}}\|_{L^{\infty}(\Gamma_N^\varepsilon \times (0,T))} \|c_i^\varepsilon\|_{L^{2p-1}(\Gamma^\varepsilon_N)}^{2p-1} + \|\vec{r}_\varepsilon\|_{L^{\infty}(\Gamma_C^\varepsilon \times (0,T))} \left(\|c_i^\varepsilon\|_{L^{2p-1}(\Gamma^\varepsilon_C)}^{2p-1} + \|c_i^\varepsilon\|_{L^{2p}(\Gamma^\varepsilon_C)}^{2p}\right)\right).
\end{align*}
 Note that, for any measurable $\Gamma^\varepsilon \subseteq \Gamma_C^\varepsilon \cup \Gamma_N^\varepsilon$, it holds 
 \[\varepsilon \|c_i^\varepsilon\|_{L^{2p-1}(\Gamma^\varepsilon)}^{2p-1} \leq \varepsilon\|c_i^\varepsilon\|_{L^{2p}(\Gamma^\varepsilon)}^{2p} + 1  = \varepsilon\|(c_i^\varepsilon)^p\|_{L^{2}(\Gamma^\varepsilon)}^{2} + 1. \] 
 In particular, this estimate applies to \(\Gamma_C^\varepsilon\) and \(\Gamma_N^\varepsilon\).
 Furthermore, noticing that \(1 \geq \frac{2p-1}{p^2} \geq \frac{1}{p}\), using the nonnegativity of coefficients given by Assumptions \ref{asdeg} and \ref{asdif}, applying Young's inequality on the bulk reaction terms, and exploiting the uniform bounds on $\vec{q}_{\mathrm{prod}}$ given by \ref{asprod}, we arrive at
\begin{equation}\label{alikakoshelp}
    \begin{aligned}
\frac{1}{p}\sum_{i=1}^3& \left(\frac{d}{dt}\|c_i^\varepsilon\|_{L^{2p}(\Omega^\varepsilon)}^{2p} + \|\nabla (c_i^\varepsilon)^p\|_{L^2(\Omega^\varepsilon)}^2 +  \|c_i^\varepsilon\|_{ L^{2p}(\Omega^\varepsilon)}^{2p}\right) \\
&\leq C  \left(1 + \|\vec{r}_\varepsilon\|_{L^{\infty}(\Gamma_C^\varepsilon \times (0,T))}\right) \sum_{i=1}^3 \left(\|c_i^\varepsilon\|_{L^{1+2p}(\Omega^\varepsilon)}^{1+2p} + \|c_i^\varepsilon\|_{L^{2p}(\Omega^\varepsilon)}^{2p} + \varepsilon\|(c_i^\varepsilon)^p\|_{L^{2}(\Gamma_C^\varepsilon \cup \Gamma_N^\varepsilon)}^{2}  + 1\right).
\end{aligned}
\end{equation}
Note that the expression
\[
C\left(1+\|\vec r_\varepsilon\|_{L^\infty(\Gamma_C^\varepsilon\times(0,T))}\right)
\]
is bounded for each fixed \(\varepsilon\) by \eqref{wfspace}, and is moreover uniformly bounded with respect to \(\varepsilon\) whenever the estimate \eqref{estimater} holds. Since this constant is independent of \(p\), we absorb it, together with the other constants independent of \(p\) and \(\varepsilon\), into a generic constant \(C\), suppressing the explicit dependence on \(\vec{r}_\varepsilon\). We emphasize that \(C\) is independent of \(\varepsilon\) under \eqref{estimater}, whereas without \eqref{estimater} it may depend on \(\varepsilon\) through the bound provided by \eqref{wfspace}.
We now estimate the boundary terms. 
 The scaled trace inequality \eqref{scaledtrace} implies
 \[ \varepsilon \|(c_i^\varepsilon)^p\|_{L^{2}(\Gamma^\varepsilon)}^{2} \leq p C_\delta \|(c_i^\varepsilon)^p\|_{L^{2}(\Omega^\varepsilon)}^{2} +  \varepsilon^2 \frac{\delta}{p}\|\nabla (c_i^\varepsilon)^p\|_{L^{2}(\Omega^\varepsilon)}^{2} \] 
 for arbitrary $\delta>0$. Choosing $\delta$ fixed and sufficiently small, the gradient part is absorbed by the second term on the left-hand side of \eqref{alikakoshelp}. 
Multiplying \eqref{alikakoshelp} by $p$, integrating in time until $\tau\in(0,T]$, invoking Assumption~\ref{asinit} on the uniform boundedness of initial data, and estimating the component-wise norm by the vector norm, we obtain 
\begin{align*}
\sum_{i=1}^3 \left(\|c_i^\varepsilon\|_{L^{2p}(\Omega^\varepsilon)}^{2p}(\tau) + \int_0^\tau \| (c_i^\varepsilon)^p\|_{W^{1,2}(\Omega^\varepsilon)}^2\dt \right)
\leq p^2C\sum_{i=1}^3\left(1 + \|\vec{c}_\varepsilon\|_{L^{1+2p}(Q^\varepsilon)}^{1+2p} + \|\vec{c}_\varepsilon\|_{L^{2p}(Q^\varepsilon)}^{2p} \right). 
\end{align*}
Taking the supremum over $\tau\in(0,T]$ and noting that the term $\|\vec{c}_\varepsilon\|_{L^{2p}(Q^\varepsilon)}^{2p}$ is controlled by $\|\vec{c}_\varepsilon\|_{L^{2p+1}(Q^\varepsilon)}^{2p+1} + 1$, we obtain the iterative estimate
\begin{align}\label{alikakosit}
\sum_{i=1}^3 \left(\|c_i^\varepsilon\|_{L^\infty(L^{2p}(\Omega^\varepsilon))}^{2p} +\| (c_i^\varepsilon)^p\|_{L^2(W^{1,2}(\Omega^\varepsilon))}^2\right) 
\leq p^{2}C\left(1  + \|\vec{c}_\varepsilon\|_{L^{1+2p}(Q^\varepsilon)}^{1+2p}\right).
\end{align}

Setting \(u_i := (c_i^\varepsilon)^p\), invoking the 
embedding \(L^\infty(0,T;L^2(\Omega^\varepsilon)) \cap L^2(0,T;W^{1,2}(\Omega^\varepsilon))\hookrightarrow L^{\frac{10}{3}}(Q^\varepsilon),\) together with the estimate \eqref{alikakosit}, 
we obtain\[\|c_i^\varepsilon\|_{L^{\frac{10}{3}p}(Q^\varepsilon)}^p=\|u_i\|_{L^{\frac{10}{3}}(Q^\varepsilon)} \le C\left(\|u_i\|_{L^\infty(L^{2}(\Omega^\varepsilon))} + \| u_i\|_{L^2(W^{1,2}(\Omega^\varepsilon))}\right) \le pC\left(1+\|\vec c_\varepsilon\|_{L^{1+2p}(Q^\varepsilon)}^{\frac{1}{2}+p}\right), \]
for each $i$. Taking the $p$-root yileds
\begin{align}\label{recursion}
    \|c_i^\varepsilon\|_{L^{\frac{10}{3}p}(Q^\varepsilon)} \le p^{1/p}C\left(1+\|\vec c_\varepsilon\|_{L^{1+2p}(Q^\varepsilon)}^{\frac{1}{2p}+1}\right),
\end{align}
for each $i$.

To bootstrap the estimate, we choose the exponents \((p_k)_{k\ge0}\) recursively so that the norm on the right-hand side is controlled by the norm on the left-hand side from the previous step. 
By \eqref{startalikakosweak} we have a bound on $\|\vec c_\varepsilon\|_{L^{1+2p}(Q^\varepsilon)}$ whenever $1+2p\leq 10/3$. If in addition \eqref{estimatec} holds, this bound is uniform in $\varepsilon$ by \eqref{startalikakos}. 
Taking \(p_0=\frac{7}{6}\) fulfills $1+2p = 10/3$, so the right-hand side of \eqref{recursion} is bounded. Furthermore, if \eqref{estimatec} and \eqref{estimater} hold, it is uniformly bounded in $\varepsilon$. Consequently, by \eqref{recursion} applied with $p=p_0$, the norm
\[\|c_i^\varepsilon\|_{L^{\frac{10}{3}p_0}(Q^\varepsilon)} = \|c_i^\varepsilon\|_{L^{\frac{35}{9}}(Q^\varepsilon)} \]
is bounded as well, which initializes the bootstrap. 

We then define $(p_k)_{k\ge1}$ recursively by
\begin{align}\label{recursion1}
    1+2p_{k}=\frac{10}{3}p_{k-1}.
\end{align}
With this choice, the exponent on the right-hand side of \eqref{recursion} at step $k$ coincides with the exponent obtained on the left-hand side at step $k-1$, yielding
\[
\|c_i^\varepsilon\|_{L^{\frac{10}{3}p_k}(Q^\varepsilon)}
\le
p_k^{1/p_k}C
\left(
1+
\|\vec c_\varepsilon\|_{L^{\frac{10}{3}p_{k-1}}(Q^\varepsilon)}^{1+\frac1{2p_k}}
\right).
\]
Iterating this bound therefore provides control in successively higher Lebesgue spaces. Solving the recursion \eqref{recursion1} explicitly,
\[
p_k=\frac34+\frac14\left(\frac53\right)^{k+1},
\qquad k\ge0,
\]
so that $p_k\to\infty$ exponentially fast as $k\to\infty$. Since $p_k\ge p_0=\tfrac{7}{6}>1$ for all $k$, the elementary bound $\sup_{x>1}x^{1/x}=e^{1/e}$ shows that $p_k^{1/p_k}$ is bounded uniformly in $k$. The classical Moser--Alikakos iteration \cite{Alikakos1979} therefore yields
\[
c_i^\varepsilon\in L^\infty(Q^\varepsilon).
\]
Furthermore, the resulting $L^\infty$-bound is uniform in $\varepsilon$ whenever \eqref{estimatec}--\eqref{estimater} hold.

Since $c_i^\varepsilon\in L^2(0,T;W^{1,2}(\Omega^\varepsilon))$, we can conclude that it is also bounded in $L^\infty$ on $\Gamma_C^\varepsilon$ and estimate \eqref{estimatecbetter} indeed holds.
\end{proof}

Note that due to Lemma~\ref{boundedws}, the solution constructed in Lemma~\ref{existencews} fulfills the uniform estimates \eqref{estimatec}--\eqref{estimatecbetter}.

\begin{lemma}\label{uniquews}
    Under Assumptions \ref{asagg}--\ref{asinit}, the weak solution to the microscopic problem \eqref{ISFeps}--\eqref{receptoreps} is unique.
\end{lemma}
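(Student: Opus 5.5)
We argue by a standard energy/Gronwall argument on the difference of two solutions, exploiting that every weak solution is bounded. Let $(\vec c^{(1)},\vec r^{(1)})$ and $(\vec c^{(2)},\vec r^{(2)})$ be two weak solutions with the same data, and set $u_i:=c_i^{(1)}-c_i^{(2)}$ and $w_j:=r_j^{(1)}-r_j^{(2)}$. By Definition~\ref{weaksoldef}, both $r_j^{(k)}\in L^\infty((0,T)\times\Gamma_C^\varepsilon)$. By Lemma~\ref{boundedws}, which applies to every weak solution, both $c_i^{(k)}\in L^\infty(Q^\varepsilon)$, with traces bounded on $\Gamma_C^\varepsilon$. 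Let $M$ be a common bound for all these quantities; it may depend on $\varepsilon$, which is harmless since $\varepsilon$ is fixed here. We will not rely on nonnegativity.

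First we subtract the bulk weak formulations \eqref{wfbulk} and test with $\varphi_i=u_i$, which is admissible. The production terms cancel. The degradation term contributes $-\delta_i\|u_i\|^2_{L^2}\le 0$. Since $f_{\mathrm{agg},i}$ is bilinear, we write
\[
f_{\mathrm{agg},i}(\vec c^{(1)},\vec c^{(1)})-f_{\mathrm{agg},i}(\vec c^{(2)},\vec c^{(2)})=f_{\mathrm{agg},i}(\vec u,\vec c^{(1)})+f_{\mathrm{agg},i}(\vec c^{(2)},\vec u)-(\text{linear part counted once}),
\]
so that it is pointwise bounded by $C(M)|\vec u|$. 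Hence the aggregation contribution is at most $C(M)\|\vec u\|^2_{L^2(\Omega^\varepsilon)}$. For the boundary term we use
\[
f_{\mathrm{bind},i}(\vec c^{(1)},\vec r^{(1)})-f_{\mathrm{bind},i}(\vec c^{(2)},\vec r^{(2)})=\sum_j\beta^i_j\bigl(u_i r_j^{(1)}+c_i^{(2)}w_j\bigr)+\sum_j\gamma^i_j w_j,
\]
which gives the bound $C(M)\,\varepsilon\int_{\Gamma_C^\varepsilon}(|u_i|^2+|\vec w|^2)\,dS$. We then apply the scaled trace inequality \eqref{scaledtrace} to $\varepsilon\|u_i\|^2_{L^2(\Gamma_C^\varepsilon)}$ with a small $\delta$, and absorb the gradient part into $D_i\|\nabla u_i\|^2$. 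Summing over $i$ yields
\[
\tfrac12\tfrac{d}{dt}\|\vec u\|^2_{L^2(\Omega^\varepsilon)}+c\|\nabla\vec u\|^2\le C\|\vec u\|^2_{L^2(\Omega^\varepsilon)}+C\varepsilon\|\vec w\|^2_{L^2(\Gamma_C^\varepsilon)}.
\]

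Next we subtract the receptor equations \eqref{wfsurf} and test with $\varepsilon w_j$, which is admissible since $w_j\in L^2(L^2(\Gamma_C^\varepsilon))$ and $\partial_t w_j\in L^2$. The same bilinear splitting gives
\[
f_{\mathrm{rec},j}(\vec c^{(1)},\vec r^{(1)})-f_{\mathrm{rec},j}(\vec c^{(2)},\vec r^{(2)})=\sum\lambda^j_{il}\bigl(u_l r_i^{(1)}+c_l^{(2)}w_i\bigr)+\sum\lambda^j_i w_i,
\]
which leads to
\[
\tfrac12\tfrac{d}{dt}\varepsilon\|\vec w\|^2_{L^2(\Gamma_C^\varepsilon)}\le C\varepsilon\|\vec w\|^2_{L^2(\Gamma_C^\varepsilon)}+C\varepsilon\|\vec u\|^2_{L^2(\Gamma_C^\varepsilon)}.
\]
The last term is again controlled by \eqref{scaledtrace} as $C\|\vec u\|^2_{L^2(\Omega^\varepsilon)}+\varepsilon^2\delta\|\nabla\vec u\|^2$, and its gradient part is absorbed by the dissipation from the bulk estimate.

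Adding the two inequalities gives, for $E(t):=\|\vec u\|^2_{L^2(\Omega^\varepsilon)}+\varepsilon\|\vec w\|^2_{L^2(\Gamma_C^\varepsilon)}$, the differential inequality $E'\le CE$ with $E(0)=0$, since the initial data coincide and Definition~\ref{weaksoldef} guarantees continuity at $t=0$. Gronwall's lemma then gives $E\equiv0$, which proves uniqueness.

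The main obstacle is the step that uses the $L^\infty$ bounds. The quadratic terms $u_l c_j^{(k)}$ in the bulk and the trace products $c_i^{(2)}w_j$ and $c_l^{(2)}w_i$ on $\Gamma_C^\varepsilon$ can only be closed in $L^2$ because Lemma~\ref{boundedws} provides bounds on $c$ both in the bulk and on the capillary boundary. We also need to check that the time-derivative pairing $\langle\partial_t u_i,u_i\rangle=\tfrac12\tfrac{d}{dt}\|u_i\|^2$ is justified; this follows from the standard Lions--Magenes lemma, with $W^{-1,2}$ interpreted as the dual of $W^{1,2}(\Omega^\varepsilon)$.
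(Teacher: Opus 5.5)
Your proposal is correct and follows essentially the same route as the paper. Both arguments subtract the two weak formulations, test the bulk equations with $u_i$ and the receptor equations with $\varepsilon w_j$, split the bilinear differences, control them via the $L^\infty$ bounds from Definition~\ref{weaksoldef} and Lemma~\ref{boundedws} together with the scaled trace inequality \eqref{scaledtrace}, and close with Gronwall.

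The only deviation is in the aggregation difference. You bound it pointwise using the bulk $L^\infty$ bound; note that the correction term in your splitting is the linear part evaluated at $\vec c^{(1)}$. The paper instead uses H\"older with exponents $2,6,3$ and the Sobolev embedding, which gives a Gronwall coefficient $1+\|\vec c_\varepsilon\|^2_{W^{1,2}}+\|\vec d_\varepsilon\|^2_{W^{1,2}}\in L^1(0,T)$ rather than a constant.
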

\begin{proof} 
Let $(\vec{c}_\varepsilon, \vec{r}_\varepsilon)$ and $(\vec{d}_\varepsilon, \vec{s}_\varepsilon)$ be two weak solutions to the microscopic problem \eqref{ISFeps}--\eqref{receptoreps} with the same initial data. By Lemma~\ref{boundedws} we have $c_i^\varepsilon, d_i^\varepsilon \in L^\infty(Q^\varepsilon)$.
We define the differences of the solutions as $u^\varepsilon_i = c^\varepsilon_i - d^\varepsilon_i$ and $v^\varepsilon_j = r^\varepsilon_j - s^\varepsilon_j$. 

We test \eqref{wfbulk} for $c^\varepsilon_i$ and $d^\varepsilon_i$, $i=1,2,3$, by $u^\varepsilon_i$ and subtract the latter from the former. Similarly, we test \eqref{wfsurf} for $r^\varepsilon_j$ and $s^\varepsilon_j$, $j=0,\dots,3$, by $\varepsilon v^\varepsilon_j$ and subtract. 
Summing over $i$ and $j$ and adding the bulk and surface equations together, we obtain
\begin{multline*}
    \varepsilon\frac{d}{dt} \|\vec{v}_\varepsilon\|_{L^2(\Gamma_C^\varepsilon)}^2 +  \frac{d}{dt} \|\vec{u}_\varepsilon\|_{L^2(\Omega^\varepsilon)}^2 +  \|\vec{u}_\varepsilon\|_{W^{1,2}(\Omega^\varepsilon)}^2  \\
    \leq C\left| \sum_{j=0}^{3} \varepsilon\int_{\Gamma_C^\varepsilon} (f_{\mathrm{rec},j}(\vec{c}_\varepsilon, \vec{r}_\varepsilon) - f_{\mathrm{rec},j}(\vec{d}_\varepsilon, \vec{s}_\varepsilon)) v^\varepsilon_j \, dS \right|
+ C \left| \sum_{i=1}^{3} \varepsilon \int_{\Gamma_C^\varepsilon} (f_{\mathrm{bind},i}(\vec{c}_\varepsilon, \vec{r}_\varepsilon) - f_{\mathrm{bind},i}(\vec{d}_\varepsilon, \vec{s}_\varepsilon)) u^\varepsilon_i \, dS\right| \\
+ C \left|\sum_{i=1}^{3} \int_{\Omega^\varepsilon} (f_{\mathrm{agg},i}(\vec{c}_\varepsilon,\vec{c}_\varepsilon) - f_{\mathrm{agg},i}(\vec{d}_\varepsilon, \vec{d}_\varepsilon)) u^\varepsilon_i \, dx \right| = C\left(|I_1| + |I_2| + |I_3|\right).
\end{multline*}

We now estimate each term separately. For the bulk aggregation contribution, we obtain
\[ |I_3| \leq \sum_{i=1}^3 C \int_{\Omega^\varepsilon} \left( \left|\sum_{k,l=1}^3 c^\varepsilon_l c^\varepsilon_k - d^\varepsilon_l d^\varepsilon_k\right| |u^\varepsilon_i| + \sum_{k=1}^3 |u^\varepsilon_k|  |u^\varepsilon_i| \right) dx. \]
Using the identity $c^\varepsilon_l c^\varepsilon_k - d^\varepsilon_l d^\varepsilon_k = u^\varepsilon_lc^\varepsilon_k + d^\varepsilon_lu^\varepsilon_k$ and relabeling indices, we deduce that
\[ |I_3| \leq \sum_{i,k,l=1}^3 C \int_{\Omega^\varepsilon} |u^\varepsilon_l|\left(|c^\varepsilon_k| + |d^\varepsilon_k|\right) |u^\varepsilon_i| \, dx + C \sum_{i,k=1}^3 \|u^\varepsilon_k\|_{L^2(\Omega^\varepsilon)} \|u^\varepsilon_i\|_{L^2(\Omega^\varepsilon)}. \]
Applying Hölder's inequality with exponents $2, 6, 3$, the Sobolev embedding, and Young's inequality, we can estimate
\[ \sum_{i,k,l=1}^3 C \int_{\Omega^\varepsilon} |u^\varepsilon_l| \left( |c^\varepsilon_k| + |d^\varepsilon_k| \right) |u^\varepsilon_i| \, dx \leq \delta \sum_{i=1}^3 \|u^\varepsilon_i\|_{W^{1,2}(\Omega^\varepsilon)}^2 + C(\delta) \sum_{k,l=1}^3 \left( \|c^\varepsilon_k\|_{W^{1,2}(\Omega^\varepsilon)}^2 + \|d^\varepsilon_k\|_{W^{1,2}(\Omega^\varepsilon)}^2 \right) \|u^\varepsilon_l\|_{L^2(\Omega^\varepsilon)}^2, \]
for $\delta>0$ arbitrarily small. 
Hence
\[ |I_3| \leq  \delta \|\vec{u}_\varepsilon\|^2_{W^{1,2}(\Omega^\varepsilon)} + C(\delta) \left( 1 +  \|\vec{c}_\varepsilon\|_{W^{1,2}(\Omega^\varepsilon)}^2 + \|\vec{d}_\varepsilon\|_{W^{1,2}(\Omega^\varepsilon)}^2 \right) \|\vec{u}_\varepsilon\|^2_{L^2(\Omega^\varepsilon)}. \]

For the surface reactions, we have
    \[ |I_1| \leq \varepsilon C \sum_{j,k=0}^3 \int_{\Gamma_C^\varepsilon}\sum_{l=1}^3  \left(|c^\varepsilon_l||v_k^\varepsilon|+  |u^\varepsilon_l||s^\varepsilon_k|\right) |v^\varepsilon_j| \, dS + \varepsilon C \sum_{j,k=0}^3 \|v^\varepsilon_k\|_{L^2(\Gamma_C^\varepsilon)} \|v^\varepsilon_j\|_{L^2(\Gamma_C^\varepsilon)}. \] 
    Using Hölder's inequality and estimating the component-wise norm by the vector norm, we have
    \[ |I_1| \leq  \varepsilon C \left( \|\vec{c}_\varepsilon\|_{L^\infty((0,T)\times \Gamma_C^\varepsilon)} \|\vec{v}_\varepsilon\|^2_{L^2(\Gamma_C^\varepsilon)} +  \|\vec{v}_\varepsilon\|_{L^2(\Gamma_C^\varepsilon)} \|\vec{u}_\varepsilon\|_{L^2(\Gamma_C^\varepsilon)} \|\vec{s}_\varepsilon\|_{L^\infty((0,T)\times \Gamma_C^\varepsilon)} +  \|\vec{v}_\varepsilon\|^2_{L^2(\Gamma_C^\varepsilon)}\right). \] 
    Young's inequality and the Sobolev embedding imply that
    \[ |I_1| \leq  \varepsilon C(\delta) \left(1 +  \|\vec{c}_\varepsilon\|^2_{L^\infty((0,T)\times \Gamma_C^\varepsilon)} +   \|\vec{s}_\varepsilon\|^2_{L^\infty((0,T)\times \Gamma_C^\varepsilon)}\right)\|\vec{v}_\varepsilon\|^2_{L^2(\Gamma_C^\varepsilon)} + \delta \|\vec{u}_\varepsilon\|^2_{W^{1,2}(\Omega^\varepsilon)} . \] 
 Similarly, for the surface binding term, we obtain  
     \[ |I_2| \leq   \varepsilon C(\delta) \left(1 +  \|\vec{c}_\varepsilon\|^2_{L^\infty((0,T)\times \Gamma_C^\varepsilon)} + \|\vec{s}_\varepsilon\|^2_{L^\infty((0,T)\times \Gamma_C^\varepsilon)} \right)\|\vec{v}_\varepsilon\|^2_{L^2(\Gamma_C^\varepsilon)} + \delta \|\vec{u}_\varepsilon\|^2_{W^{1,2}(\Omega^\varepsilon)}.\] 

Choosing $\delta$ small enough, we arrive at the estimate
\begin{multline*}
    \frac{d}{dt} \left(\varepsilon\|\vec{v}_\varepsilon\|_{L^2(\Gamma_C^\varepsilon)}^2 +  \|\vec{u}_\varepsilon\|_{L^2(\Omega^\varepsilon)}^2\right) +  \|\vec{u}_\varepsilon\|_{W^{1,2}(\Omega^\varepsilon)}^2 \\ \leq C \left( 1 +  \|\vec{c}_\varepsilon\|_{W^{1,2}(\Omega^\varepsilon)}^2 + \|\vec{d}_\varepsilon\|_{W^{1,2}(\Omega^\varepsilon)}^2 + \|\vec{c}_\varepsilon\|^2_{L^\infty((0,T)\times \Gamma_C^\varepsilon)} +   \|\vec{s}_\varepsilon\|^2_{L^\infty((0,T)\times \Gamma_C^\varepsilon)} \right) \left(\|\vec{u}_\varepsilon\|^2_{L^2(\Omega^\varepsilon)} + \varepsilon \|\vec{v}_\varepsilon\|^2_{L^2(\Gamma_C^\varepsilon)}\right).
\end{multline*}
Using Gronwall's lemma, the integrability of terms on the right-hand side, and the fact that \[\|\vec{v}_\varepsilon(0)\|_{L^2(\Gamma_C^\varepsilon)}^2 = 0 = \|\vec{u}_\varepsilon(0)\|_{L^2(\Omega^\varepsilon)}^2,\] we have  
\[\varepsilon\|\vec{v}_\varepsilon\|_{L^2(\Gamma_C^\varepsilon)}^2 +  \|\vec{u}_\varepsilon\|_{L^2(\Omega^\varepsilon)}^2 +  \|\vec{u}_\varepsilon\|_{W^{1,2}(\Omega^\varepsilon)}^2 \leq 0,\]
which implies that $\vec{c}_\varepsilon = \vec{d}_\varepsilon, $ almost everywhere on $\Omega^\varepsilon\times(0,T)$, and $\vec{r}_\varepsilon = \vec{s}_\varepsilon, $ almost everywhere on $\Gamma_C^\varepsilon\times(0,T)$.
\end{proof}

\section{Convergence results and derivation of macroscopic equations.}\label{sec:conv}
In this section, we use the a priori estimates of Theorem~\ref{wellposednes} 
to pass to the homogenization limit and derive an effective macroscopic problem. 
More precisely, we show that solutions to the microscopic 
problem \eqref{ISFeps}--\eqref{receptoreps} converge, up to a subsequence, to a solution of the macroscopic problem \eqref{macroc}--\eqref{cellproblem}. The derivation relies on the 
theory of two-scale convergence, for which the necessary definitions and results 
are collected in Appendix~\ref{homgtheory}.

Recall the definition of $Z$, $Y$, $\Gamma_C$, and $\Gamma_N$ from the beginning of 
Section~\ref{sec:micro}. For the two-scale limit functions, we will need the following periodic 
function spaces. Let $\Cper(Z)$ denote the space of continuous functions on $\mathbb{R}^3$ that are 
$l_i$-periodic in the $i$-th variable, and let $\Sper(Z)$ denote the space of smooth $Z$-periodic 
functions. We define $W^{1,2}_{\mathrm{per}}(Z)$ as the completion of $\Sper(Z)$ with respect to 
the $W^{1,2}$-norm. Analogously, let $\Cper(\Gamma_C)$ denote the space of continuous functions on 
$\bigcup_{\xi \in \mathbb{Z}^3} (\Gamma_C + \xi)$ that are $l_i$-periodic in the $i$-th variable.

Since the theory of two-scale convergence in Appendix~\ref{homgtheory} is 
formulated for sequences defined on the fixed macroscopic domain $\Omega$, 
we employ the extensions $\tilde{c}_i^\varepsilon \in L^2(0,T; W^{1,2}(\Omega))$ 
introduced in Remark~\ref{rem:extension}. In addition, we consider the extension 
of $c_i^\varepsilon$ from $\Omega^\varepsilon$ to $\Omega$ by zero, denoted 
by $\hat{c}_i^\varepsilon$. The two extensions are related by
\begin{equation*}
    \hat{c}_i^\varepsilon = \tilde{c}_i^\varepsilon \chi^\varepsilon,
\end{equation*}
where $\chi^\varepsilon$ is the characteristic function of $\Omega^\varepsilon$, 
defined as the rescaled version of the characteristic function $\chi_Y$ of 
the extracellular subdomain $Y \subset Z$, extended $Z$-periodically to 
$\mathbb{R}^3$. More precisely, we set
\begin{equation*}
    \chi^\varepsilon(x) := \chi_Y\!\left(\frac{x}{\varepsilon}\right), \qquad x \in \Omega.
\end{equation*}

We begin by extracting weakly and two-scale convergent subsequences from the 
uniform estimates of Theorem~\ref{wellposednes}, and subsequently identify the two-scale limits to close the homogenized system.

\begin{lemma}[Two-scale and weak convergences]\label{lem:twoscale}
Under Assumptions~\ref{asagg}--\ref{asinit}, there exist limit functions
\[
c_i \in L^2(0,T; W^{1,2}(\Omega)), 
\qquad
c_i^1 \in L^2\bigl((0,T)\times\Omega; W^{1,2}_\mathrm{per}(Z)/\mathbb{R}\bigr),
\qquad
r_j \in L^\infty\bigl((0,T)\times\Omega\times\Gamma_C\bigr),
\]
for $i=1,2,3$ and $j=0,1,2,3$, such that, up to a subsequence, the following 
convergences hold as $\varepsilon \to 0$:
\begin{align}
    \tilde{c}_i^\varepsilon &\rightharpoonup c_i &&\text{weakly in } L^2(0,T; W^{1,2}(\Omega)), \label{conv:cweak}\\
    \nabla \tilde{c}_i^\varepsilon &\rightharpoonup \nabla_x c_i + \nabla_y c_i^1,  &&\text{two-scale, } \label{conv:gradc}\\
    \hat{c}_i^\varepsilon &\rightharpoonup c_i\chi_Y, &&\text{two-scale, } \label{conv:c0weak}\\
    \nabla \hat{c}_i^\varepsilon &\rightharpoonup \left(\nabla_x c_i + \nabla_y c_i^1\right)\chi_Y, &&\text{two-scale, } \label{conv:gradc0}\\
    r_j^\varepsilon &\rightharpoonup r_j &&\text{two-scale, } \label{conv:r}\\
   \partial_t r_j^\varepsilon &\rightharpoonup \partial_t r_j  &&\text{two-scale. } \label{conv:dtr}
\end{align}
Moreover, $c_i \geq 0$ a.e.\ in $(0,T)\times\Omega$ and 
$r_j \geq 0$ a.e.\ on $(0,T)\times\Omega\times\Gamma_C$.
\end{lemma}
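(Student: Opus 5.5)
The plan is to combine the uniform bounds of Theorem~\ref{wellposednes} with the extension operator of Remark~\ref{rem:extension} and the standard compactness results for two-scale convergence collected in Appendix~\ref{homgtheory}, extracting successive subsequences.

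\textbf{Bulk quantities.} By \eqref{estimatec} and \eqref{ext_est}, the extensions $\tilde c_i^\varepsilon$ are bounded in $L^2(0,T;W^{1,2}(\Omega))$ uniformly in $\varepsilon$. Reflexivity gives a weakly convergent subsequence with limit $c_i$, which is \eqref{conv:cweak}. The two-scale compactness theorem for bounded sequences in $L^2(0,T;W^{1,2}(\Omega))$ (Nguetseng--Allaire) then gives, after a further subsequence, $\tilde c_i^\varepsilon\rightharpoonup c_i$ two-scale with $c_i$ independent of $y$. It also yields a corrector $c_i^1\in L^2((0,T)\times\Omega;W^{1,2}_{\mathrm{per}}(Z)/\mathbb R)$ such that \eqref{conv:gradc} holds.

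For \eqref{conv:c0weak} and \eqref{conv:gradc0}, I would write $\hat c_i^\varepsilon=\tilde c_i^\varepsilon\chi^\varepsilon$ and $\nabla\hat c_i^\varepsilon$ as the zero extension of $\nabla c_i^\varepsilon$, that is, $\chi^\varepsilon\nabla\tilde c_i^\varepsilon$. For test functions $\phi(t,x,x/\varepsilon)$ with $\phi$ admissible, the function $\phi\chi_Y$ is again admissible. Indeed, $\chi_Y$ is $Z$-periodic and bounded, and products $L^2(\Omega;\Cper(Z))\cdot L^\infty_{\mathrm{per}}$ remain admissible for a sequence that two-scale converges. If the paper's appendix only provides continuous test functions, one can instead approximate $\chi_Y$ in $L^2(Z)$ and use the uniform $L^2$ bounds. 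Multiplying the two-scale limits by $\chi_Y$ then gives \eqref{conv:c0weak} and \eqref{conv:gradc0}.

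\textbf{Surface quantities.} By \eqref{estimater} and \eqref{estimateder}, $\varepsilon\|r_j^\varepsilon\|^2_{L^2((0,T)\times\Gamma_C^\varepsilon)}\le C\varepsilon|\Gamma_C^\varepsilon|T\le C$ and $\varepsilon\|\partial_t r_j^\varepsilon\|^2_{L^2((0,T)\times\Gamma_C^\varepsilon)}\le C$. The uniform surface-measure bound $\varepsilon|\Gamma_C^\varepsilon|\le C$ is used in the first estimate. The compactness theorem for two-scale convergence on periodic surfaces (Allaire--Damlamian--Hornung, Neuss-Radu) therefore yields subsequences with $r_j^\varepsilon\rightharpoonup r_j$ and $\partial_t r_j^\varepsilon\rightharpoonup w_j$ two-scale, where $r_j,w_j\in L^2((0,T)\times\Omega\times\Gamma_C)$.

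To identify $w_j=\partial_t r_j$, I would test with $\varepsilon\int\int_{\Gamma_C^\varepsilon}\partial_t r_j^\varepsilon\,\psi(t,x,x/\varepsilon)$ for $\psi$ compactly supported in time and integrate by parts in $t$. This is legitimate because $\partial_t$ commutes with the oscillation in $x$. Passing to the limit gives $\int w_j\psi=-\int r_j\partial_t\psi$, hence \eqref{conv:dtr}.

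For the $L^\infty$ bound on $r_j$, I would use that $\|r_j^\varepsilon\|_{L^\infty}\le C$ implies $|\varepsilon\int r_j^\varepsilon\psi|\le C\varepsilon\int_{\Gamma_C^\varepsilon}|\psi(x,x/\varepsilon)|\to C\int\int_{\Gamma_C}|\psi|$. By duality this gives $\|r_j\|_{L^\infty}\le C$.

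\textbf{Nonnegativity and main obstacle.} Nonnegativity follows the same way. Testing with nonnegative $\psi$ gives $\varepsilon\int r_j^\varepsilon\psi\ge0$, which is preserved in the limit, so $r_j\ge0$. Similarly, $c_i\ge0$ because $\tilde c_i^\varepsilon\ge0$ on $\Omega^\varepsilon$ and $\hat c_i^\varepsilon\rightharpoonup c_i\chi_Y$ with $|Y|>0$. Testing with $\psi(t,x)\ge0$ gives $|Y|\int c_i\psi\ge0$.

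The main delicate point is the surface two-scale compactness with only time-derivative control and no spatial regularity. It is handled entirely through the weak surface compactness theorem and the integration-by-parts identification above; no strong convergence is needed at this stage.
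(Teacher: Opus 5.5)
Your proposal is correct and takes essentially the same route as the paper. The uniform bounds of Theorem~\ref{wellposednes} and the extension operator feed into the two-scale compactness results of Appendix~\ref{homgtheory}, and the $L^\infty$ bound and nonnegativity of $r_j$ come from testing against nonnegative oscillating functions. Your local variations are all valid and make explicit points the paper leaves implicit: you obtain \eqref{conv:c0weak}--\eqref{conv:gradc0} from \eqref{conv:gradc} by multiplying with $\chi_Y$ rather than citing part iii of Theorem~\ref{thm:cpcttwoscale}, so the same $c_i$ and corrector $c_i^1$ appear in both gradient limits; you identify the two-scale limit of $\partial_t r_j^\varepsilon$ as $\partial_t r_j$ by integrating by parts in time; and you get $c_i\ge 0$ through the nonnegative zero extension $\hat c_i^\varepsilon$ instead of $\tilde c_i^\varepsilon$.
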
 
\begin{proof}
The estimate \eqref{estimatec} from Theorem \ref{wellposednes} implies the convergence \eqref{conv:cweak}. Estimate \eqref{estimatec}, Remark \ref{rem:extension}, and the second part of Theorem \ref{thm:cpcttwoscale} yield the convergence \eqref{conv:gradc}. Moreover, estimate \eqref{estimatec} together with the third part of Theorem \ref{thm:cpcttwoscale}, implies convergences \eqref{conv:c0weak} and \eqref{conv:gradc0}.
By the estimate \eqref{estimater} and Theorem \ref{def:manifoldtwoscale}, there exists a subsequence of $r_j^\varepsilon$ that two-scale converges to $r_j\in L^2\bigl((0,T)\times\Omega\times\Gamma_C\bigr)$. 

To upgrade this regularity, we show that the limit is essentially bounded. Indeed, by \eqref{estimater} we have $|r_j^\varepsilon(t,x)| \le C$ a.e.\ on $(0,T)\times \Gamma_C^\varepsilon$. Let $\varphi \in L^1(0,T; C^0(\Omega; C_{\mathrm{per}}(\Gamma_C)))$, $\varphi \ge 0$. Then
\[
|r_j^\varepsilon(t,x)\varphi(t,x,x/\varepsilon)| \le C\, \varphi(t,x,x/\varepsilon)
\]
almost everywhere, and therefore, after multiplication by $\varepsilon$ and integration over $(0,T)\times \Gamma_C^\varepsilon$, the sequence is uniformly bounded. Passing to the limit in the sense of Theorem \ref{def:manifoldtwoscale} yields
\[
\left|\int_0^T \!\!\int_{\Omega\times\Gamma} r_j(t,x,y)\varphi(t,x,y)\,dS_y\,dx\,dt\right|
\le C \int_0^T \!\!\int_{\Omega\times\Gamma} \varphi(t,x,y)\,dS_y\,dx\,dt,
\]
for all nonnegative $\varphi$. By a standard localization argument, this implies $|r_j(t,x,y)| \le C$ almost everywhere in $(0,T)\times \Omega\times \Gamma_C$, and hence $r_j \in L^\infty\bigl((0,T)\times\Omega\times\Gamma_C\bigr)$.
 
The convergence \eqref{conv:dtr} is due to the estimate \eqref{estimateder} and Theorem \ref{def:manifoldtwoscale}.
The nonnegativity of the limits is preserved by weak convergence.
\end{proof}

To pass to the limit in the nonlinear terms, weak convergence alone is insufficient. The following lemma provides the necessary strong convergence.

\begin{lemma}[Strong convergence]\label{lem:strong}
Under Assumptions~\ref{asagg}--\ref{asinit}, the following strong convergences 
hold up to a subsequence as $\varepsilon \to 0$: 
\begin{align}
    \tilde{c}_i^\varepsilon &\to c_i \quad \text{strongly in } L^p((0,T)\times\Omega), p \in \left[1,\infty\right) 
    \label{sconv:c}\\
  c_i^\varepsilon &\to c_i \quad \text{strongly two-scale on } (0,T)\times\Gamma_C.
    \label{sconv:trc}
\end{align}
\end{lemma}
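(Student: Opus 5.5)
The strong convergence \eqref{sconv:c} will follow from an Aubin--Lions argument applied to the extensions, upgraded to all finite $p$ by the uniform $L^\infty$ bound \eqref{estimatecbetter}. The trace convergence \eqref{sconv:trc} will come from a uniform fractional-in-space, Lipschitz-in-time compactness estimate, combined with the scaled fractional trace inequality \eqref{scaledfractrace}.

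\textbf{Step 1: compactness of $\tilde c_i^\varepsilon$ in $L^2$.} The difficulty is that $\partial_t \tilde c_i^\varepsilon$ is not directly controlled, because the extension operator does not commute with the dual bound on $\partial_t c_i^\varepsilon$ in $W^{-1,2}(\Omega^\varepsilon)$. I would therefore avoid Aubin--Lions on $\Omega$ and use the Kolmogorov--Riesz--Simon criterion instead.
\begin{itemize}
\item \emph{Space translations.} These are controlled uniformly by \eqref{estimatec} and \eqref{ext_est}, since $\tilde c_i^\varepsilon$ is bounded in $L^2(0,T;W^{1,2}(\Omega))$.
\item \emph{Time translations.} For $h>0$, test \eqref{wfbulk} with $\varphi = c_i^\varepsilon(t+h)-c_i^\varepsilon(t)$, integrated over $(t,t+h)$. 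This is the standard Alt--Luckhaus trick. Using the $L^\infty$ bounds \eqref{estimater}, \eqref{estimatecbetter}, the quadratic aggregation term is bounded in $L^\infty$. The boundary terms are handled by \eqref{scaledtrace}, with $\varepsilon|\Gamma^\varepsilon|\le C$. The result is
\[
\int_0^{T-h}\|c_i^\varepsilon(t+h)-c_i^\varepsilon(t)\|_{L^2(\Omega^\varepsilon)}^2\,dt\le Ch,
\]
uniformly in $\varepsilon$.
\item \emph{Transfer to $\Omega$.} By linearity of the extension operator and \eqref{ext_est}, the same bound holds for $\tilde c_i^\varepsilon$ on $\Omega$.
\end{itemize}
Simon's theorem then gives relative compactness in $L^2((0,T)\times\Omega)$, and the limit is $c_i$ by \eqref{conv:cweak}. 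Interpolation with the uniform $L^\infty$ bound gives convergence in $L^p$ for every $p<\infty$. Here I use that the extension of an $L^\infty$-bounded function stays $L^\infty$-bounded; alternatively one can truncate at level $C$, since the limit $c_i$ is bounded.

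\textbf{Step 2: strong two-scale convergence of the traces.} I would use the unfolding operator $\mathcal T_\varepsilon$, or equivalently the characterization of strong two-scale convergence on $\Gamma_C$ via norm convergence, which I take from Appendix~\ref{homgtheory}. It suffices to show
\[
\varepsilon\|c_i^\varepsilon - c_i\|^2_{L^2((0,T)\times\Gamma_C^\varepsilon)}\to 0.
\]
Since $c_i$ is smooth only after mollification, I would compare with $c_i^\delta$ and apply \eqref{scaledfractrace} to $\tilde c_i^\varepsilon - c_i^\delta$ with some $r\in(1/2,1)$:
\[
\varepsilon\|\tilde c_i^\varepsilon-c_i^\delta\|^2_{L^2(\Gamma^\varepsilon)}\le C\|\tilde c_i^\varepsilon-c_i^\delta\|^2_{L^2(\Omega^\varepsilon)}+C\varepsilon^{2r}[\tilde c_i^\varepsilon-c_i^\delta]^2_{W^{r,2}}.
\]
After integrating in time, the first term tends to $C\|c_i-c_i^\delta\|^2$ by Step 1. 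The second term is bounded by $C\varepsilon^{2r}$ times the uniform $W^{1,2}$ bound (the $W^{r,2}$ seminorm is dominated by the $W^{1,2}$ norm), plus $\varepsilon^{2r}\|c_i^\delta\|^2_{W^{1,2}}$. This term is $O(\varepsilon^{2r})$, so the factor $\varepsilon^{2r}$ is what makes it vanish. Finally, for smooth $c_i^\delta$ one has $\varepsilon\|c_i^\delta\|^2_{L^2(\Gamma_C^\varepsilon)}\to |\Gamma_C|/|Z|\,\|c_i^\delta\|^2$. Letting $\varepsilon\to0$ and then $\delta\to0$ gives strong two-scale convergence of $c_i^\varepsilon|_{\Gamma_C^\varepsilon}$ to $c_i$, which is independent of $y$.

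\textbf{Main obstacle.} I expect the main obstacle to be the time-translation estimate in Step 1, specifically making it $\varepsilon$-uniform despite the perforation and the nonlinear boundary flux. The $L^\infty$ bounds \eqref{estimatecbetter} are what make all right-hand sides uniformly integrable there. If the Alt--Luckhaus computation proves awkward, my fallback is to bound $\partial_t$ of the zero extension $\hat c_i^\varepsilon$ in $(W^{1,2}(\Omega))'$ uniformly. Then I would apply a Lions--Aubin variant for the pair $\hat c_i^\varepsilon$, $\tilde c_i^\varepsilon$ as in Meirmanov or Hornung, using that $\hat c_i^\varepsilon$ is multiplied by the weakly converging $\chi^\varepsilon$.
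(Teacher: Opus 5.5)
Your proof is correct. It follows the paper's overall structure (strong $L^2$ compactness, interpolation with the $L^\infty$ bound, then a scaled trace inequality), but it departs from the paper at two points.

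\textbf{Compactness.} The paper quotes the Aubin--Lions lemma to get $\tilde c_i^\varepsilon\to c_i$ in $L^2((0,T)\times\Omega)$. You instead use Simon's translation criterion, proving the time-translation estimate on $\Omega^\varepsilon$ and transferring it to $\Omega$ through the linearity and $L^2$-bound of the extension operator. This fills a gap the paper passes over: \eqref{estimateder} controls $\partial_t c_i^\varepsilon$ only on the varying domain $\Omega^\varepsilon$ and gives no bound on $\partial_t\tilde c_i^\varepsilon$. So the Aubin--Lions lemma on the fixed domain $\Omega$ does not apply as stated, in either of the paper's two uses. The step you call the main obstacle is in fact easy. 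Writing $c_i^\varepsilon(t+h)-c_i^\varepsilon(t)=\int_t^{t+h}\partial_s c_i^\varepsilon\,ds$ and pairing it with itself gives the $O(h)$ bound directly from \eqref{estimatec} and \eqref{estimateder}. The $L^\infty$ bounds are not needed for this.

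\textbf{Trace step.} The paper runs a second Aubin--Lions argument to obtain $\tilde c_i^\varepsilon\to c_i$ in $L^2(0,T;W^{r,2}(\Omega))$, because it drops the factor $\varepsilon^{2r}$ in \eqref{scaledfractrace}. You keep the power of $\varepsilon$, so strong $L^2$ convergence and the uniform $W^{1,2}$ bound suffice, and the second compactness argument becomes unnecessary. In fact even the classical inequality \eqref{scaledtrace}, applied to $c_i^\varepsilon-c_i$ with $\delta$ fixed, gives
$\varepsilon\|c_i^\varepsilon-c_i\|^2_{L^2((0,T)\times\Gamma_C^\varepsilon)}\le C\|\tilde c_i^\varepsilon-c_i\|^2_{L^2(Q)}+C\varepsilon^2\to0$.
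Your mollification $c_i^\delta$ is then needed only to pass to the norm-convergence form of Definition~\ref{def:strongtwoscalesurface}.

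\textbf{Interpolation to $p>2$.} You rightly note that this needs an $L^\infty$ bound on the extension, which the paper leaves implicit. Your truncation fix is valid: the truncated extension still coincides with $c_i^\varepsilon$ on $\Omega^\varepsilon$, and the limit satisfies $0\le c_i\le C$.

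\textbf{Comparison.} The paper's route is shorter and yields the stronger, but unused, convergence in $L^2(0,T;W^{r,2}(\Omega))$. Yours assumes less in the trace step and is rigorous on the perforated domain.
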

\begin{proof}
The strong convergence \eqref{sconv:c} for $p\leq 2$ follows from the uniform bounds 
\eqref{estimatec} and \eqref{estimateder} via the Aubin--Lions lemma  \cite{Si1987}, which yields compactness in $L^2((0,T)\times\Omega)$ 
from the uniform bounds on $c_i^\varepsilon$ in $L^2(0,T; W^{1,2}(\Omega^\varepsilon))$ 
and on $\partial_t c_i^\varepsilon$ in $L^2(0,T; W^{-1,2}(\Omega^\varepsilon))$. For $p\geq 2 $, the result follows by interpolation between the strong
convergence in \(L^2\) and the uniform boundedness in \(L^{\infty}\).

In addition, using the Rellich--Kondrachov theorem
\cite{Adams2003},
\[
W^{1,2}(\Omega)\hookrightarrow\hookrightarrow W^{r,2}(\Omega),
\qquad r\in(1/2,1),
\]
and applying the Aubin--Lions compactness argument to the extensions
\(\tilde c_i^\varepsilon\), we obtain the stronger convergence
\begin{align} \label{strongfrac}
    \tilde c_i^\varepsilon\to c_i
\qquad\text{strongly in }
L^2(0,T;W^{r,2}(\Omega)).
\end{align} 
Employing the second scaled trace inequality \eqref{scaledfractrace}, we have
\[\limsup_{\varepsilon \to 0} \varepsilon \|c_i^\varepsilon-c_i\|_{L^2((0,T)\times\Gamma^\varepsilon)}^2
\le
\limsup_{\varepsilon \to 0} C\|c_i^\varepsilon-c_i\|_{L^2(W^{r,2}(\Omega^\varepsilon))}^2 \le
\lim_{\varepsilon \to 0} C\|\tilde{c}_i^\varepsilon-c_i\|_{L^2(W^{r,2}(\Omega))}^2 = 0,
 \]
 hence the convergence \eqref{sconv:trc} holds true by the characterization provided in Definition \ref{def:strongtwoscalesurface}.
\end{proof}

\subsection{The macroscale problem}
We can now define the effective macroscopic problem. Find 
\(
(\vec{c}, \vec{r}) = \bigl( (c_i)_{i=1}^3, (r_j)_{j=0}^3 \bigr)
\) with
\begin{equation}\label{macroest}
    \begin{gathered}
        c_i \in L^2(0,T; W^{1,2}(\Omega)), \quad \dert c_i \in L^2(0,T; W^{-1,2}(\Omega)),\\
 r_j \in L^\infty\bigl((0,T)\times\Omega\times\Gamma_C\bigr), \quad 
\dert r_j \in L^2\bigl((0,T)\times\Omega\times\Gamma_C\bigr),
    \end{gathered}
\end{equation}
$c_i \geq 0$ and $r_j \geq 0$ a.e., such that 
\begin{align}
\varphi\partial_t c_i 
- \nabla_x \cdot \bigl(D^{\mathrm{eff}}_i \nabla_x c_i\bigr) 
&= \varphi\left(f_{\mathrm{deg},i}(c_i) 
+  f_{\mathrm{agg},i}(\vec{c})\right)
+ S_i(c_i, \vec{r})
&&\text{in } (0,T)\times\Omega, \label{macroc}\\
 D^{\mathrm{eff}}_i\nabla c_i \cdot \bn &= 0 &&\text{on } (0,T)\times\partial \Omega, \label{macrobdry}\\
\partial_t r_j 
&= f_{\mathrm{rec},j}(\vec{c}, \vec{r}) 
&&\text{on } (0,T)\times\Omega\times\Gamma_C, \label{macror}
\end{align}
 where $\varphi = {|Y|}/|Z|$ denotes the \emph{porosity}, defined as the volume fraction of the reference cell occupied by the ECS. The effective surface contribution $S_i(c_i, \vec{r})$ is given by
\begin{equation}\label{effsurface}
    S_i(c_i, \vec{r}) = \frac{1}{|Z|}\left(\int_{\Gamma_N} q_{\mathrm{prod},i}\, dS_y
+f_{\mathrm{bind},i}\left(c_i, \int_{\Gamma_C} \vec{r} \, dS_y\right)\right).
\end{equation}
The \emph{effective diffusion tensor} $D^{\mathrm{eff}}_i$ is determined by the microscopic diffusivity and the geometry of the microstructure.  We define the \emph{geometric effective diffusivity tensor} $D^{\mathrm{eff}}$, by
\begin{equation}\label{genefftensor}
({D}^{\mathrm{eff}})_{kl} = \frac{1}{|Z|}\int_{Y}\bigl(\delta_{kl} + \partial_{y_k} \chi^l(y)\bigr) \, dy,
\end{equation}
where $\chi^l$ is the solution to the cell problem
\begin{equation}\label{cellproblem}
\begin{aligned}
\Delta_y \chi^l &= 0 && \text{in } Y, \\
(\nabla_y \chi^l + {\pmb e_l}) \cdot \bn_y &= 0 && \text{on } \Gamma_C \cup \Gamma_N, \\
\int_{Y} \chi^l \, dy &= 0,
&&\text{and }\chi^l \text{ is } Z\text{-periodic}.
\end{aligned}
\end{equation}
The effective diffusion tensor of species $i$ is then obtained by scaling $D^{\mathrm{eff}}$ with the microscopic diffusion coefficient,
\begin{equation}\label{efftensor}
D^{\mathrm{eff}}_i = D_i\, D^{\mathrm{eff}}.
\end{equation}

We conclude this section with the main homogenization result. Throughout the proof, we use the standard notation \(\mathcal{D}(U)\) for the space of smooth functions with compact support in the open set \(U\). In particular, \(\mathcal{D}(\Omega;\Sper(Y))\) consists of functions that are compactly supported with respect to the macroscopic variable \(x\), \(Y\)-periodic with respect to the microscopic variable \(y\), and smooth in both variables.

\begin{thm}[Homogenization]\label{thm:homog}
Under Assumptions~\ref{asagg}--\ref{asinit}, the sequence 
$(\vec{c}_\varepsilon, \vec{r}_\varepsilon)$ of solutions to the microscopic problem \eqref{ISFeps}--\eqref{receptoreps} converges, up to a subsequence, to a weak solution $(\vec{c}, \vec{r})$ of the macroscopic problem \eqref{macroc}--\eqref{cellproblem}. 
Furthermore, the effective diffusion tensor $D^{\mathrm{eff}}$ defined by \eqref{genefftensor} is symmetric, positive definite, and bounded from above by the porosity, i.e.
\[
\bxi\cdot D^{\mathrm{eff}}\bxi \le \frac{|Y|}{|Z|}\,|\bxi|^2 \quad \text{for all } \bxi\in\rr^3.
\]
\end{thm}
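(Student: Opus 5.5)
We use the convergences of Lemma~\ref{lem:twoscale} and Lemma~\ref{lem:strong} and pass to the limit in the weak formulation \eqref{wfbulk}--\eqref{wfsurf} with oscillating test functions. The standard two-step procedure applies.

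\emph{Corrector equation.} Test \eqref{wfbulk} with $\varphi^\varepsilon(t,x)=\varepsilon\,\phi(t,x)\,\psi(x/\varepsilon)$, where $\phi\in\mathcal{D}((0,T)\times\Omega)$ and $\psi\in\Sper(Z)$. The time-derivative term, the bulk reactions (bounded in $L^\infty$ by \eqref{estimatecbetter}) and the boundary terms are multiplied by $\varepsilon$, since $\varepsilon\int_{\Gamma^\varepsilon}|\cdot|\,dS$ stays bounded. All of these vanish in the limit. By \eqref{conv:gradc0}, only
\[
\int_Q\int_Y D_i(\nabla_x c_i+\nabla_y c_i^1)\cdot\nabla_y\psi\,dy\,\phi \,dx\,dt=0
\]
survives. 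By linearity this gives $c_i^1=\sum_l \chi^l\,\partial_{x_l}c_i$, with $\chi^l$ solving the cell problem \eqref{cellproblem}. Existence and uniqueness of $\chi^l$ follow from Lax--Milgram on $W^{1,2}_{\mathrm{per}}(Y)/\mathbb{R}$, using connectedness of $Y$.

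\emph{Macroscopic equation.} Next test with $\phi(t,x)\in C^1([0,T];W^{1,2}(\Omega))$ vanishing at $t=T$, so that the time derivative can be integrated by parts. Weak convergence of $\hat c_i^\varepsilon$ to $\varphi c_i$ gives the porosity-weighted time term and the initial data. For the initial data I will assume $\hat c^\varepsilon_{i,0}$ two-scale converges, which is an implicit hypothesis to be added. The gradient term converges by \eqref{conv:gradc0} to $\int_Q D_iD^{\mathrm{eff}}\nabla c_i\cdot\nabla\phi$ after inserting the corrector. The degradation and aggregation terms converge because of the strong convergence \eqref{sconv:c} in $L^p$ together with $\chi^\varepsilon\rightharpoonup\varphi$ weakly-*; products of strongly convergent sequences with a weak-* convergent one suffice. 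The production term converges to $|Z|^{-1}\int_{\Gamma_N}q_{\mathrm{prod},i}$ by the surface two-scale theory of Theorem~\ref{def:manifoldtwoscale}. Here I assume $q_{\mathrm{prod},i}$ has the form $q(t,x,x/\varepsilon)$ with an admissible test-function regularity, or simply that it two-scale converges.

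The binding term $\varepsilon\int_{\Gamma_C^\varepsilon}(\beta^i_j c_i^\varepsilon r_j^\varepsilon+\gamma^i_j r_j^\varepsilon)\phi$ is the crucial nonlinear step. The trace $c_i^\varepsilon$ converges \emph{strongly} two-scale on $\Gamma_C$ by \eqref{sconv:trc}, with limit $c_i(t,x)$ independent of $y$. The receptors $r_j^\varepsilon$ converge weakly two-scale and are uniformly bounded. A strong-times-weak product lemma for surface two-scale convergence then gives the limit
\[
|Z|^{-1}\int_Q\int_{\Gamma_C}\bigl(\beta^i_j c_i r_j+\gamma^i_j r_j\bigr)\,dS_y\,\phi,
\]
which equals $|Z|^{-1}f_{\mathrm{bind},i}(c_i,\int_{\Gamma_C}\vec r\,dS_y)$ by linearity in $\vec r$. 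I expect the main obstacle in the macroscopic equation to be justifying this product lemma for the bilinear term with bounded factors.

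\emph{Receptor equation.} Here the difficulty is sharper. $f_{\mathrm{rec},j}$ contains $c_l r_i$, and $r_i$ converges only weakly two-scale, but $c_l$ converges strongly, so the same strong-weak product argument applies. The argument runs as follows: test \eqref{wfsurf} with $\varepsilon\psi(t,x,x/\varepsilon)$, and use \eqref{conv:dtr} for the time derivative. This gives \eqref{macror} pointwise in $y\in\Gamma_C$. Because the receptor equation is an ODE with no spatial regularity, I will avoid needing strong convergence of $r_j^\varepsilon$ precisely by this linearity in $\vec r$ for fixed strongly convergent $\vec c$. Initial data for $r_j$ follow from the weak continuity of the trace at $t=0$ under two-scale convergence of $r^\varepsilon_{j,0}$. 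Nonnegativity comes from Lemma~\ref{lem:twoscale}, and the regularity \eqref{macroest} follows from the bounds.

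\emph{Properties of $D^{\mathrm{eff}}$.} Testing the cell problem for $\chi^k$ with $\chi^l$ yields
\[
D^{\mathrm{eff}}_{kl}=\frac1{|Z|}\int_Y(\nabla\chi^k+\pmb e_k)\cdot(\nabla\chi^l+\pmb e_l)\,dy,
\]
which is symmetric. Hence
\[
\bxi\cdot D^{\mathrm{eff}}\bxi=|Z|^{-1}\|\nabla w_\xi+\bxi\|^2_{L^2(Y)},\qquad w_\xi=\sum_l\xi_l\chi^l.
\]
This quantity is nonnegative. It vanishes only if $y\cdot\bxi+w_\xi$ is constant on the connected set $Y$, which is impossible for $\bxi\neq0$ because $w_\xi$ is periodic. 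This gives positive definiteness.

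For the upper bound, $w_\xi$ minimizes $\int_Y|\nabla w+\bxi|^2$ over periodic $w$, which is the variational form of the cell problem. Comparing with $w=0$ gives $\bxi\cdot D^{\mathrm{eff}}\bxi\le |Y||\bxi|^2/|Z|$.
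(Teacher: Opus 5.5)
Your proposal is correct and follows essentially the paper's route. You use oscillating test functions (you split the paper's combined $\varphi_i+\varepsilon\varphi_i^1(t,x,x/\varepsilon)$ into a corrector step and a macroscopic step), identify $c_i^1$ through the cell problems, pass to the limit in $f_{\mathrm{bind},i}$ and $f_{\mathrm{rec},j}$ by a strong-trace/weak-receptor product, and use the same energy identities for $D^{\mathrm{eff}}$. The product lemma you flag as the main obstacle is handled in the paper by writing $c_i^\varepsilon=(c_i^\varepsilon-c_i)+c_i$ and applying Cauchy--Schwarz with $\varepsilon\|c_i^\varepsilon-c_i\|^2_{L^2((0,T)\times\Gamma_C^\varepsilon)}\to0$ and the bound \eqref{estimater}. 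Your minimization argument for the porosity bound is equivalent to the paper's identity $\bxi\cdot D^{\mathrm{eff}}\bxi=\frac{|Y|}{|Z|}|\bxi|^2-\frac{1}{|Z|}\int_Y|\nabla_y\chi^{\bxi}|^2\,dy$, and your explicit hypotheses on initial data and on $q_{\mathrm{prod},i}$, together with $W^{1,2}(\Omega)$ test functions that also encode \eqref{macrobdry}, make points precise that the paper leaves implicit.
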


\begin{proof} 
We take
\[
\varphi_i^\varepsilon(t,x) := \varphi_i(t,x) 
+ \varepsilon \varphi_i^1\!\left(t, x, \tfrac{x}{\varepsilon}\right)
\]
as a test function in \eqref{wfbulk}, where $\varphi_i \in \mathcal{D}(0,T; \mathcal{D}(\Omega))$ and 
$\varphi_i^1 \in \mathcal{D}(0,T; \mathcal{D}(\Omega; \Sper(Z)))$, and we pass to the limit $\varepsilon \to 0$. 

Using the definition of the weak derivative together with \eqref{conv:c0weak},
\begin{align*} 
\lim_{\varepsilon\to 0}&\int_0^T \langle \partial_t c_i^{\varepsilon}, \varphi_i^{\varepsilon} \rangle \dt = - \lim_{\varepsilon\to 0}\int_0^T \!\!\int_{\Omega_{\varepsilon}} c_i^{\varepsilon} \partial_t \varphi_i^{\varepsilon}\dx \dt = -\lim_{\varepsilon\to 0}\int_0^T \!\!\int_{\Omega} \hat{c}_i^{\varepsilon} \partial_t \varphi_i^{\varepsilon} \dx \dt\\&= -\int_0^T\!\! \int_{\Omega}\!\int_Z c_i \chi_Y \partial_t \varphi_i \dy \dx \dt = \int_0^T \!\! \int_{\Omega}\!\int_Y \partial_t c_i \varphi_i \dy \dx \dt \ = |Y| \int_0^T \!\! \langle \partial_t c_i, \varphi_i \rangle \dt.
\end{align*}
For the diffusion term, invoking the convergence \eqref{conv:gradc0}, we have
\begin{align*} 
\lim_{\varepsilon\to 0} \int_0^T \!\! \int_{\Omega_{\varepsilon}} D_i \nabla c_i^{\varepsilon} \cdot \nabla \varphi_i^{\varepsilon} \dx \dt = \int_0^T \!\! \int_{\Omega} \! \int_Y D_i(\nabla_x c_i + \nabla_y c_i^1) \cdot (\nabla_x \varphi_i + \nabla_y \varphi_i^1) \dy \dx \dt.
\end{align*}

We handle the binding term using the weak convergence \eqref{conv:r}, the strong convergence \eqref{sconv:trc}, and the uniform receptor bounds \eqref{estimater}: 
\begin{align*}
\lim_{\varepsilon \to 0} &\varepsilon \int_0^T \!\! \int_{\Gamma_C^{\varepsilon}} f_{\mathrm{bind}, i}(c_i^{\varepsilon}, \vec{r}_{\varepsilon}) \varphi_i^{\varepsilon} \; dS \dt = \lim_{\varepsilon \to 0} \varepsilon \int_0^T \!\! \int_{\Gamma_C^{\varepsilon}} \sum_{j=0}^3 (\beta_i^j c_i^{\varepsilon} + \gamma_i^j) r_j^{\varepsilon} \varphi_i^{\varepsilon} \; dS  \dt  \\&= \lim_{\varepsilon \to 0} \varepsilon \int_0^T \!\! \int_{\Gamma_C^{\varepsilon}} \sum_{j=0}^3 \beta_i^j (c_i^{\varepsilon} - c_i) r_j^{\varepsilon} \varphi_i^{\varepsilon} \; dS  \dt + \lim_{\varepsilon \to 0} \varepsilon \int_0^T \!\! \int_{\Gamma_C^{\varepsilon}} \sum_{j=0}^3 (\beta_i^j c_i + \gamma_i^j) r_j^{\varepsilon} \varphi_i^{\varepsilon} \; dS  \dt  \\&= \int_{\Omega}\! \int_{\Gamma_C}\sum_{j=0}^3 (\beta_i^j c_i + \gamma_i^j) r_j \varphi_i  \; dS_y  \dx \dt = \int_0^T \!\! \int_{\Omega}\! \int_{\Gamma_C} f_{\mathrm{bind},i}(c_i, \vec{r}) \; dS_y \varphi_i \dx \dt,
 \end{align*}
 where we have used 
\[
\varepsilon \left|\int_0^T \!\! \int_{\Gamma_C^{\varepsilon}} \sum_{j=0}^3 \beta_i^j (c_i^{\varepsilon} - c_i) r_j^{\varepsilon} \varphi_i^{\varepsilon} \; dS  \dt \right| 
\leq C \int_0^T \varepsilon\,\|c_i^{\varepsilon} - c_i\|_{L^2(\Gamma_C^{\varepsilon})}  \|\varphi_i^{\varepsilon}\|_{L^2(\Gamma_C^{\varepsilon})}  \|\vec{r}_\varepsilon\|_{L^\infty(\Gamma_C^{\varepsilon})}\dt \to 0
\]

The remaining bulk terms converge by \eqref{conv:c0weak} and \eqref{sconv:c}. 
Since the convergence of the production term is trivial, we obtain the limiting system
\begin{equation}\label{macromodelwf}
    \begin{aligned}
        \int_0^T \!\! &\left(|Y|\langle \partial_t c_i, \varphi_i \rangle  + \int_{\Omega} \! \int_Y D_i(\nabla_x c_i + \nabla_y c_i^1) \cdot (\nabla_x \varphi_i + \nabla_y \varphi_i^1) \dy \dx\right) \dt \\&=  \int_0^T \!\! \int_{\Omega} \! \left(\int_{\Gamma_N} q_{\mathrm{prod},i}(t,x,y) \; dS_y   +   \int_{\Gamma_C} f_{\mathrm{bind},i}(c_i, \vec{r}) \, dS_y +  |Y| \left(f_{\mathrm{deg},i}(c_i) 
+  f_{\mathrm{agg},i}(\vec{c})\right)\right)  \varphi_i \dx \dt.
    \end{aligned}
\end{equation}

To determine the unknown function $c_i^1$, we set $\varphi_i=0$ in \eqref{macromodelwf} and obtain \[\int_0^T \!\!\int_{\Omega} \! \int_Y (\nabla_x c_i(t,x) + \nabla_y c_i^1(t,x,y) )\cdot \nabla_y \varphi_i^1(t,x,y) \dy \dx \dt = 0\] 
for any $\varphi_i^1$. Hence, 
\[c_i^1(t,x,y) = \sum_{l=1}^3\partial_{x_l}c_i(t,x) \chi^l(y),\]
where $\chi^l$ solves the cell problem \eqref{cellproblem}, see \cite{Marciniak2008, RaduDipl} for more details. 

For the surface equation, we take  \[\psi^\varepsilon_j(t,x):=\psi^1_j(t,x,\tfrac{x}{\varepsilon})\] as a test function in \eqref{wfsurf}, where
$\psi^1_j \in \mathcal{D}(0,T; \mathcal{D}(\Omega; C^\infty(\Gamma_C)))$ and is $Z$-periodically extended over $\rr^3$. The convergence of the time-derivative term is secured by \eqref{conv:dtr}. The right-hand side converges by the convergences \eqref{conv:r}, \eqref{sconv:trc} and the uniform bound \eqref{estimater}, which can be shown similarly as for the binding function. We thus have 
\begin{align}\label{macromodelwfs}
   \int_0^T \!\! \int_{\Omega}\! \int_{\Gamma_C} \dert r_j \psi^1_j\; dS_y  \dx \dt  = \int_0^T \!\! \int_{\Omega}\! \int_{\Gamma_C} f_{\mathrm{rec},j}(\vec{c}, \vec{r}) \psi^1_j\; dS_y  \dx \dt.
\end{align}

Finally, setting $\varphi_i^1 = 0$ and localizing \eqref{macromodelwf} and \eqref{macromodelwfs} using 
the Fundamental Lemma of the Calculus of Variations, we obtain the desired system 
\eqref{macroc}--\eqref{macror}. Since $c_i \geq 0$ and $r_j \geq 0$ a.e.\ by Lemma~\ref{lem:twoscale}, the pair 
$(\vec{c}, \vec{r})$ obtained above is a weak solution of the macroscopic problem 
\eqref{macroc}--\eqref{cellproblem}, which proves the theorem.

Lastly, let us show that $D^{\mathrm{eff}}$ defined as in \eqref{genefftensor} is symmetric, positive definite, and bounded by porosity $\varphi=|Y|/|Z|$. By definition,
\begin{equation}\label{genefftensorZ}
|Z|(D^{\mathrm{eff}})_{kl} = \int_{Y}\delta_{kl} + \partial_{y_k} \chi^l \, dy = \int_{Y} {\pmb e_k} \cdot ( {\pmb e_l} + \nabla_y \chi^l)\, dy 
\end{equation}
Since for any $\psi\in W_{per}^{1,2}(Y)$ it holds
\[\int_Y \nabla_y \psi\cdot {\pmb e_l} \, dy = \int_{\Gamma_C \cup \Gamma_N} \psi({\pmb e_l} \cdot \bn_y) \; dS_y, \]
we can write the weak formulation of \eqref{cellproblem} as 
\begin{equation}\label{wfcellproblem}
   \int_Y \nabla_y \chi^l \cdot \nabla_y \psi\, dy = - \int_Y  {\pmb e_l} \cdot \nabla_y \psi \, dy, \quad \forall \psi\in W_{per}^{1,2}(Y).
\end{equation}
The existence and uniqueness of $\chi^l \in W_{per}^{1,2}(Y)$  solving \eqref{wfcellproblem} follow from the Lax–Milgram theorem. Taking $\varphi:= \chi^k$ in \eqref{wfcellproblem} yields \(0 = \int_Y ( {\pmb e_l} + \nabla_y \chi^l) \cdot \nabla_y \chi^k \, dy.\)
Adding this identity to \eqref{genefftensorZ} gives
\begin{equation*}
|Z|(D^{\mathrm{eff}})_{kl}
= \int_Y ({\pmb e_k} + \nabla_y \chi^k)\cdot({\pmb e_l} + \nabla_y \chi^l)\,dy.
\end{equation*}
The symmetry is thus obvious. 

To show the positive definiteness, we set $\chi^{\bxi}:=\sum_{l=1}^3\xi_l\chi^l$ for any $\bxi = (\xi_1,\xi_2,  \xi_3) \in \rr^3.$ By the linearity of the cell problem, $\chi^{\bxi}$ solves \eqref{cellproblem} with ${\pmb e_l}$ replaced by $\bxi$.
We have 
\begin{equation}\label{positivesemidef}
    \bxi \cdot |Z|D^{\mathrm{eff}} \bxi = \sum_{k,l=1}^3\xi_k\xi_l |Z|(D^{\mathrm{eff}})_{kl} = \int_Y \sum_{k=1}^3\xi_k({\pmb e_k} + \nabla_y \chi^k)\cdot \sum_{l=1}^3\xi_l({\pmb e_l} + \nabla_y \chi^l)\,dy = \int_Y |\bxi + \nabla_y \chi^{\bxi}|^2\,dy \geq 0. 
\end{equation}
Let us show that the inequality is strict for any nontrivial $\bxi$. Assume $\bxi \cdot D^{\mathrm{eff}} \bxi=0$ for $\bxi \neq {\pmb 0}$. It follows from \eqref{positivesemidef} that \[\nabla_y \chi^{\bxi} = -\bxi  \quad \text{ for a.e. }y \in Y,\]
which by the connectedness of $Y$ implies $\chi^{\bxi}(y)= -\bxi\cdot y +c$, but that contradicts the $Z$-periodicity of $\chi^{\bxi}$. 

Testing \eqref{wfcellproblem} (with $\bxi$ in place of ${\pmb e_l}$) by $\chi^{\bxi}$ yields
$\int_Y |\nabla_y\chi^{\bxi}|^2\,dy=-\int_Y \bxi\cdot\nabla_y\chi^{\bxi}\,dy$. Expanding the square in
\eqref{positivesemidef} and inserting this identity, we obtain
\[
\bxi\cdot D^{\mathrm{eff}}\bxi=\frac{|Y|}{|Z|}|\bxi|^2-\frac1{|Z|}\int_Y|\nabla_y\chi^{\bxi}|^2\,dy
\le \frac{|Y|}{|Z|}|\bxi|^2.
\]
\end{proof}

\begin{thm}\label{macrowp}
    Under Assumptions \ref{asagg}--\ref{asinit}, there exists a unique nonnegative weak solution $(\vec{c}, \vec{r})$ to the problem \eqref{macroest}--\eqref{cellproblem}. 
\end{thm}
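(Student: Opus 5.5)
Existence comes essentially for free from Theorem~\ref{thm:homog}: the limit $(\vec{c},\vec{r})$ of the microscopic solutions constructed in Theorem~\ref{wellposednes} is a weak solution of \eqref{macroc}--\eqref{cellproblem}. Nonnegativity is inherited from Lemma~\ref{lem:twoscale}. The regularity \eqref{macroest} follows from the uniform bounds \eqref{estimatec}--\eqref{estimateder} and weak lower semicontinuity under (two-scale) convergence. The main thing I would add is that the limit is bounded: $\|c_i\|_{L^\infty(Q)}\le C$ and $\|r_j\|_{L^\infty((0,T)\times\Omega\times\Gamma_C)}\le C$. The bound on $r_j$ is already in Lemma~\ref{lem:twoscale}. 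The bound on $c_i$ follows from \eqref{estimatecbetter}, since $\tilde c_i^\varepsilon\to c_i$ strongly in $L^2(Q)$ by Lemma~\ref{lem:strong}, so a.e.\ along a subsequence and the bound passes to the limit.

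The bound on $c_i$ needs one point checked. The extension $\tilde c_i^\varepsilon$ need not preserve $L^\infty$ bounds. Instead I would use $\hat c_i^\varepsilon\rightharpoonup c_i\chi_Y$ two-scale from \eqref{conv:c0weak} and argue exactly as for $r_j$ in Lemma~\ref{lem:twoscale}: test with nonnegative $\varphi(t,x)\chi_Y(y)$ and localize. This yields $0\le c_i\le C$ a.e.

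For uniqueness I would copy the energy argument of Lemma~\ref{uniquews} on the macroscopic level. Take two nonnegative weak solutions $(\vec c,\vec r)$ and $(\vec d,\vec s)$ with the same data, both in the class \eqref{macroest}. For the estimate below to apply to an arbitrary second solution, the solution class should include $c_i\in L^\infty(Q)$; I would either add this to the notion of weak solution or derive it by running the Alikakos iteration of Lemma~\ref{boundedws} directly on \eqref{macroc}. The latter works verbatim: the surface term $S_i$ is now a bulk term that is linear in $c_i$ with a coefficient bounded in terms of $\|\vec r\|_{L^\infty}$, and the $\Gamma_C$-integral of $r_j$ is bounded by conservation \eqref{receptorconserved} and nonnegativity, exactly as in Lemma~\ref{existencews}.

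With $u_i=c_i-d_i$ and $v_j=r_j-s_j$, I test \eqref{macroc} by $u_i$ and \eqref{macror} by $v_j$, integrated over $\Gamma_C$ and $\Omega$, and sum. The diffusion term gives $\int D_i D^{\mathrm{eff}}\nabla u_i\cdot\nabla u_i\ge c|\nabla u_i|^2$ by the positive definiteness in Theorem~\ref{thm:homog}, and $\varphi>0$ multiplies the time derivative. The remaining terms are Lipschitz differences of quadratic polynomials in bounded arguments:
\begin{itemize}
\item $f_{\mathrm{agg}}$ is bounded by $C\|\vec u\|_{L^2}^2$ using $L^\infty$ bounds on $\vec c,\vec d$;
\item $f_{\mathrm{bind}}(c_i,\int_{\Gamma_C}\vec r)-f_{\mathrm{bind}}(d_i,\int_{\Gamma_C}\vec s)$ is bounded by $C(|u_i|+\int_{\Gamma_C}|\vec v|\,dS_y)$;
\item $f_{\mathrm{rec}}$ is bounded by $C(|\vec u|+|\vec v|)$ pointwise in $(t,x,y)$.
\end{itemize}
Cauchy--Schwarz on $\Gamma_C$ and Young's inequality then give
\[
\frac{d}{dt}\Bigl(\varphi\|\vec u\|_{L^2(\Omega)}^2+\|\vec v\|_{L^2(\Omega\times\Gamma_C)}^2\Bigr)\le C\Bigl(\|\vec u\|_{L^2(\Omega)}^2+\|\vec v\|_{L^2(\Omega\times\Gamma_C)}^2\Bigr),
\]
and Gronwall's lemma concludes. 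Unlike the microscopic case, no trace inequality is needed, because the coupling is now algebraic in $x$.

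The main obstacle is the $L^\infty$ bound on an arbitrary macroscopic weak solution, which is needed to make the quadratic terms Lipschitz. If running the iteration on \eqref{macroc} causes trouble, my fallback is the microscopic proof of Lemma~\ref{uniquews}: Hölder with exponents $2,6,3$ and the Sobolev embedding using only $c_i\in L^2(W^{1,2})$, together with boundedness of $r_j$, which is automatic from conservation and nonnegativity. In that argument, the $c$-$r$ product terms only require $L^\infty$ on $\vec r$, plus $L^\infty$ of $c_i$ for the term $u_i\,s_j$-type products, handled pointwise in $x$. I expect that route to close as well.
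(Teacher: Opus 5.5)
Your proposal is correct and follows essentially the paper's route. Existence comes from Theorem~\ref{thm:homog}; an $L^\infty(Q)$ bound for an arbitrary macroscopic weak solution comes from rerunning the Moser--Alikakos iteration of Lemma~\ref{boundedws} on \eqref{macroc}, where $S_i$ is now an ordinary bulk term and no trace inequality is needed; and uniqueness then follows from the energy/Gronwall argument of Lemma~\ref{uniquews} applied to the differences $(u_i,v_j)$. Your extra two-scale argument giving $0\le c_i\le C$ for the constructed limit is sound but not needed once the iteration is applied to every weak solution. Your fallback still needs an $L^\infty$ bound on one of the two solutions, but the term that requires it is the receptor product $c_l v_i v_j$, not the $u_i s_j$ products, which only use the bound on $\vec s$; your two-scale bound on the constructed solution supplies this in weak--strong form.
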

\begin{proof}
A nonnegative solution exists by Theorem~\ref{thm:homog}. It remains to prove uniqueness.

We first show $c_i\in L^\infty(Q)$ via a Moser--Alikakos iteration \cite{Alikakos1979}, as in Lemma~\ref{boundedws}. The argument is in fact simpler here, since the surface term $S_i(c_i,\vec r)$ in \eqref{macroc} is already
homogenized and affine in $c_i$, so it enters the weak formulation as an ordinary bulk term rather than a boundary integral on $\Gamma_N^\varepsilon$ or $\Gamma_C^\varepsilon$.
Consequently, the scaled trace inequality used in Lemma~\ref{boundedws} is not needed, and testing \eqref{macroc} with $c_i^{2p-1}$ and iterating as before
yields $c_i \in L^\infty(Q)$.

Uniqueness now follows as in Lemma~\ref{uniquews}. Testing the system for two solutions $(\vec c,\vec r)$, $(\vec d,\vec s)$ with their differences $(u_i,v_j) := (c_i-d_i,\,r_j-s_j)$ and estimating the reaction, binding, and aggregation
terms via Assumptions~\ref{asagg}, \ref{asrec}, \ref{asbind} and the bound $c_i,d_i\in L^\infty(Q)$ gives
\[
\frac{d}{dt}\Bigl(\|\vec u\|_{L^2(\Omega)}^2 + \|\vec v\|_{L^2(\Omega\times\Gamma_C)}^2\Bigr)
+ \|\vec u\|_{W^{1,2}(\Omega)}^2
\le C(t)\Bigl(\|\vec u\|_{L^2(\Omega)}^2 + \|\vec v\|_{L^2(\Omega\times\Gamma_C)}^2\Bigr),
\]
with $C\in L^1(0,T)$. Here, unlike in Lemma~\ref{uniquews}, no
$\varepsilon$-weight or scaled trace estimate is needed since $\Omega$ is not perforated. Since $u_i(0)=0=v_j(0)$, Gronwall's lemma gives $\vec u = 0 = \vec v$, proving uniqueness.
\end{proof}

\section{Numerical experiments: Diffusion tensor's dependence on the cell geometry}\label{sec:num}
The homogenization result shows that the microscopic geometry of the tissue enters the macroscopic model through the effective diffusion tensor. In this section, we illustrate this dependence numerically for several representative cell geometries. The aim is to investigate how geometric features such as alignment and curvature affect the resulting macroscopic transport. The numerical results were implemented using the FEniCS finite element software \cite{AlnaesEtal2015}. The results reported here were robust with respect to the choice of numerical parameters (polynomial degree and mesh resolution). 

\subsection{Directional transport in two dimensions}
To examine how the shape of the capillary compartment affects transport, we compare two two-dimensional microstructures, as shown in Figure~\ref{fig:2d-geometries}. The first geometry serves as the reference, featuring a square capillary compartment, whereas the second geometry has the same volume fraction, $\varphi = 0.291$, but with the capillary compartment elongated in the $x_2$-direction. By keeping the volume fraction fixed, we isolate the effect of compartment shape on the effective diffusion tensor.
\begin{figure}[h]
    \centering
    \begin{subfigure}[b]{0.3\textwidth}
        \centering
        \begin{tikzpicture}[scale=0.8]
    \draw (0,0) rectangle (5,4);
    \fill[blue!30] (0.1, 0.1) rectangle (3.9, 3.9);
    \fill[orange!30] (3.975, 1.525) rectangle (4.925, 2.475);
    \end{tikzpicture}
    \caption{Square capillary}
    \end{subfigure}
    \hspace{0.5cm}
    \begin{subfigure}[b]{0.3\textwidth}
        \centering
        \begin{tikzpicture}[scale=0.8]
    \draw (0,0) rectangle (5,4);
    \fill[blue!30] (0.1, 0.1) rectangle (3.9, 3.9);
    \fill[orange!30] (4.1+0.25,0.1) rectangle (4.3375+0.25, 3.9);
    \end{tikzpicture}
    \caption{Elongated capillary}
    \end{subfigure}
\caption{{ \bf Two-dimensional microscopic geometries with volume fraction $\varphi = 0.291 $.} Geometry (A) serves as a reference microstructure, while geometry (B) contains an elongated capillary compartment in the $x_2$-direction.
}
\label{fig:2d-geometries}
\end{figure}

For each geometry, we solve the corresponding cell problems \eqref{cellproblem} and compute the effective diffusion tensor $D^{\mathrm{eff}}$, where each component is given by \eqref{genefftensor}. The resulting cell-problem solutions $\chi_1$ and $\chi_2$ for both geometries are shown in Figure~\ref{fig:cell-sols-grid}. 
For the reference geometry, the homogenized diffusion tensor is diagonal,
\[
D^{\mathrm{eff}}_{\mathrm{reference}} = \begin{bmatrix}
0.060 & 0 \\
0 & 0.111 \\
\end{bmatrix},
\]
indicating a higher effective diffusivity in the $x_2$-direction even in the absence of the additional preferential pathway. 
For the geometry containing the preferential pathway, the tensor remains diagonal,
\[
D^{\mathrm{eff}}_{\mathrm{elongated}} =\begin{bmatrix}
0.055 & 0 \\
0 & 0.194 \\
\end{bmatrix}.
\]
The effective diffusivity in the $x_1$-direction changes only marginally, whereas the $x_2$-component increases by approximately \(74\%\). Thus, the introduction of the preferential pathway enhances transport in the $x_2$-direction, with little effect on the orthogonal $x_1$-direction.

These results demonstrate that an isotropic microscopic diffusivity does not necessarily lead to isotropic macroscopic transport. Instead, the alignment of microscopic pathways is inherited by the effective tensor through the cell
problems.
\begin{figure}[h!]
    \centering
    \begin{subfigure}[b]{0.36\textwidth}
        \centering
        \includegraphics[width=\textwidth]{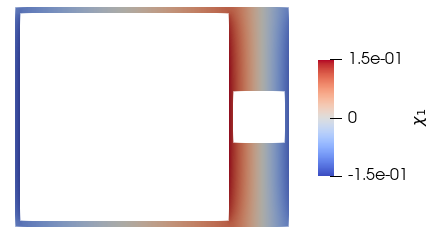}
    \end{subfigure}
    \hspace{0.5cm}
     \begin{subfigure}[b]{0.36\textwidth}
        \centering
        \includegraphics[width=\textwidth]{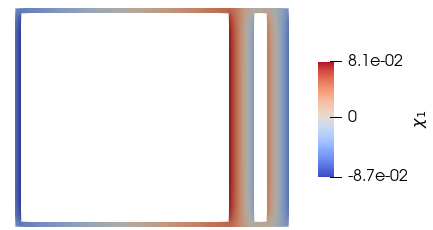}
    \end{subfigure}\\
    \vspace{0.5cm}
    \begin{subfigure}[b]{0.36\textwidth}
        \centering
        \includegraphics[width=\textwidth]{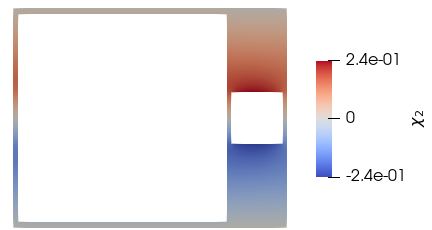}
    \end{subfigure}
    \hspace{0.5cm}
    \begin{subfigure}[b]{0.36\textwidth}
        \centering
        \includegraphics[width=\textwidth]{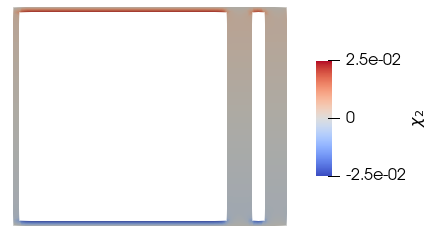}
    \end{subfigure}
    \caption{ {\bf Cell problem solutions $\chi_1$ and $\chi_2$ for both mesh geometries.} The color scale is not shared across panels: note in particular that $\chi_2$ is an order of magnitude smaller for the mesh with preferential pathways than for the reference mesh.}
    \label{fig:cell-sols-grid}
\end{figure}
 
\subsection{Effect of curvature in three dimensions}
We next consider a three-dimensional setting to investigate how the curvature of the microscopic geometry influences the effective diffusion tensor. We consider four microstructures with the same volume fraction $\varphi = 0.605$ but different interface geometries: a reference geometry with flat interfaces, a geometry with convex interfaces, a geometry with concave interfaces, and a more realistic geometry combining a concave neuronal surface with a cylindrical capillary. Since the chosen volume fraction is substantially higher than physiologically realistic values for cortical tissue, the present computations are intended as a qualitative investigation of the effects of interface geometry, while a quantitative assessment of $D^{\mathrm{eff}}$ at realistic volume fractions is left for future work. In all four geometries, the neuronal surface is symmetric with respect to all three coordinate directions, while the capillary geometry is symmetric along the $x_3$-direction. Hence, the geometries (shown in Figure~\ref{fig:3d-geometries}) do not introduce an additional preferential direction. 

\begin{figure}[htbp]
    \centering
    \begin{subfigure}[b]{0.24\textwidth}
        \centering
        \includegraphics[width=\textwidth]{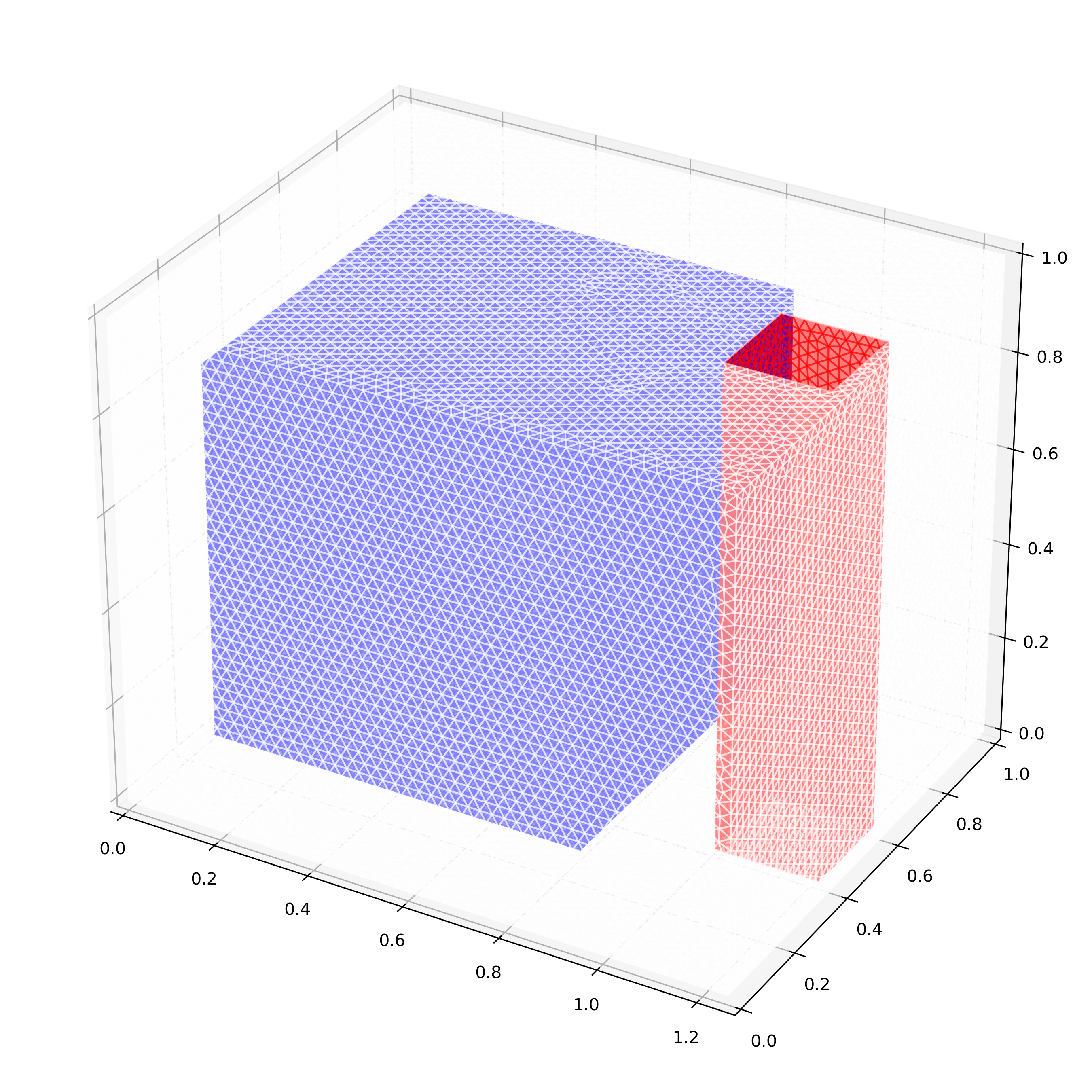}
        \caption{Flat interface}
        \label{fig:mesh-flat}
    \end{subfigure}
    \hfill
    \begin{subfigure}[b]{0.24\textwidth}
        \centering
        \includegraphics[width=\textwidth]{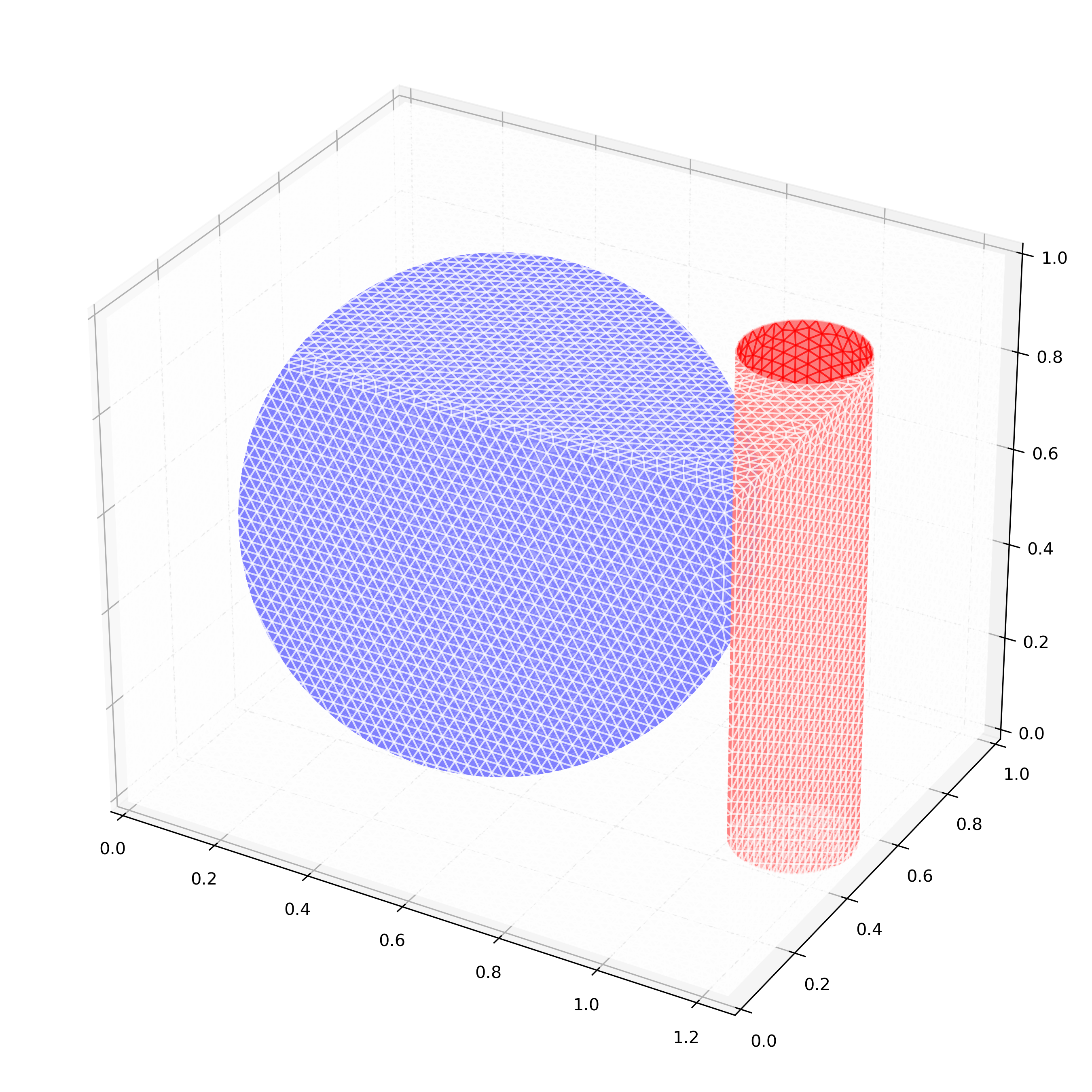}
        \caption{Convex interface}
        \label{fig:mesh-convex}
    \end{subfigure}
    \hfill
    \begin{subfigure}[b]{0.24\textwidth}
        \centering
       \includegraphics[width=\textwidth]{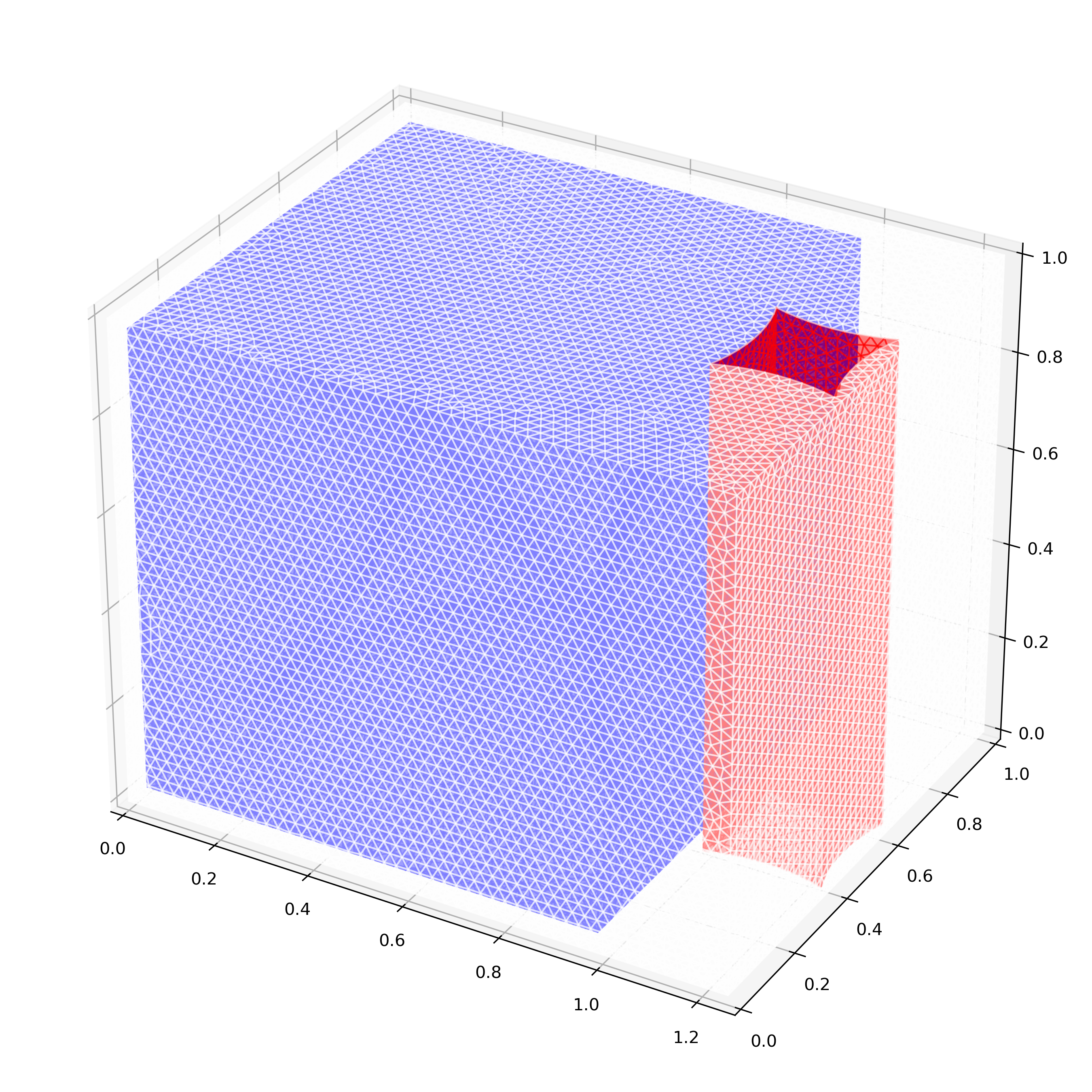}
        \caption{Concave interface}
        \label{fig:mesh-concave}
\end{subfigure}
\hfill
    \begin{subfigure}[b]{0.24\textwidth}
        \centering
       \includegraphics[width=\textwidth]{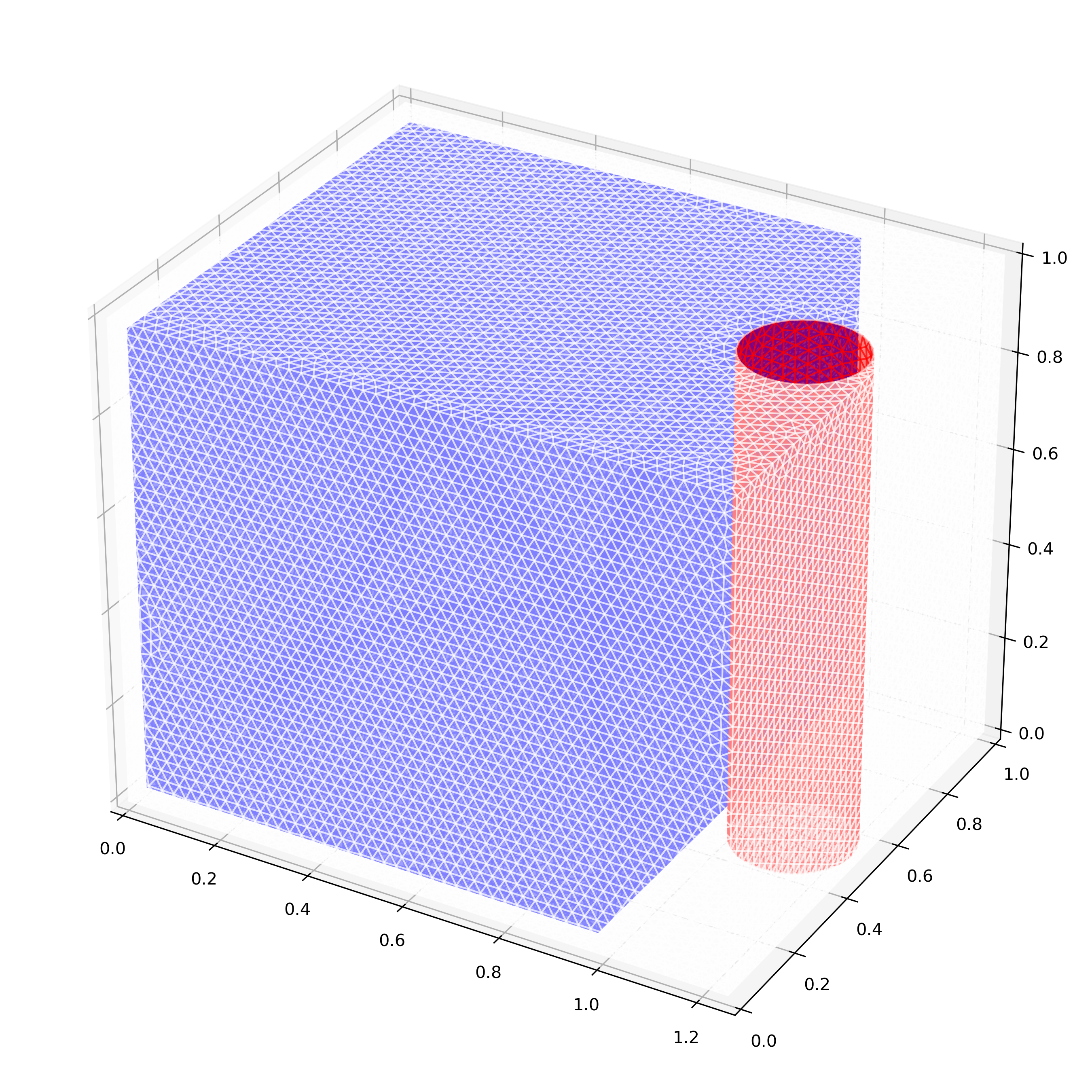}
        \caption{Mixed interface}
        \label{fig:mesh-mixed}
\end{subfigure}
\\
\begin{subfigure}[b]{0.22\textwidth}
        \centering
        \includegraphics[width=\textwidth]{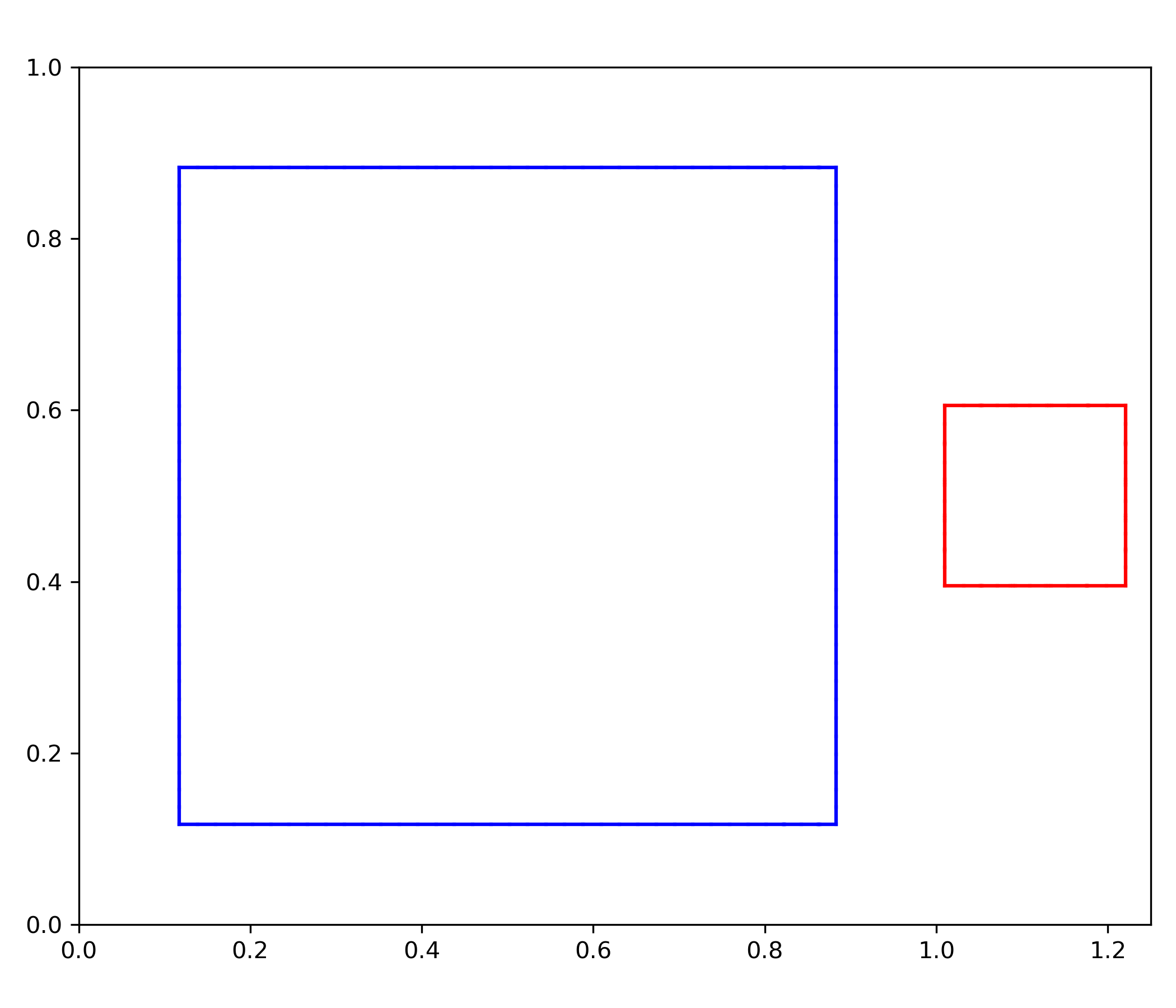}
    \end{subfigure}
    \hfill
    \begin{subfigure}[b]{0.22\textwidth}
        \centering
        \includegraphics[width=\textwidth]{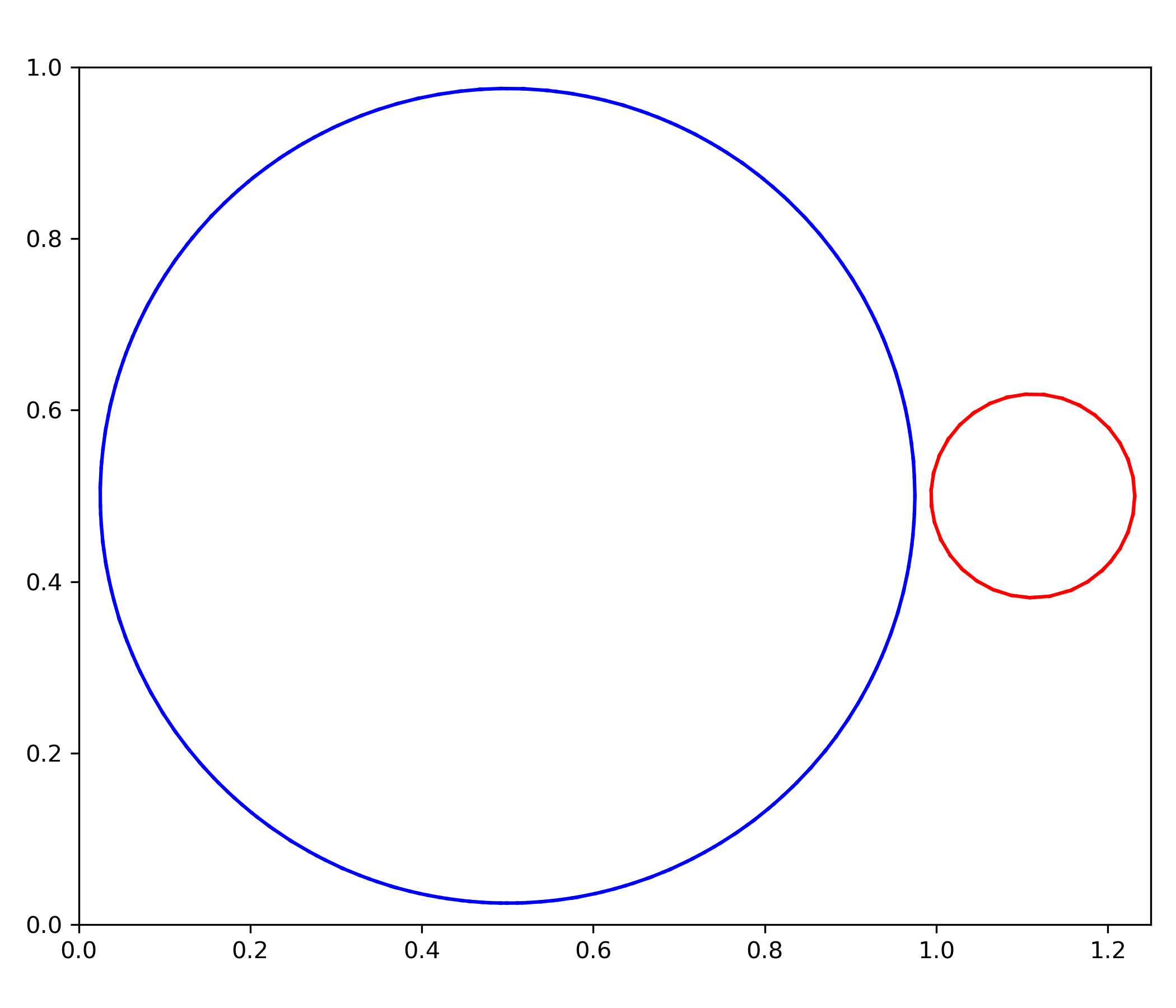}
    \end{subfigure}
    \hfill
    \begin{subfigure}[b]{0.22\textwidth}
        \centering
        \includegraphics[width=\textwidth]{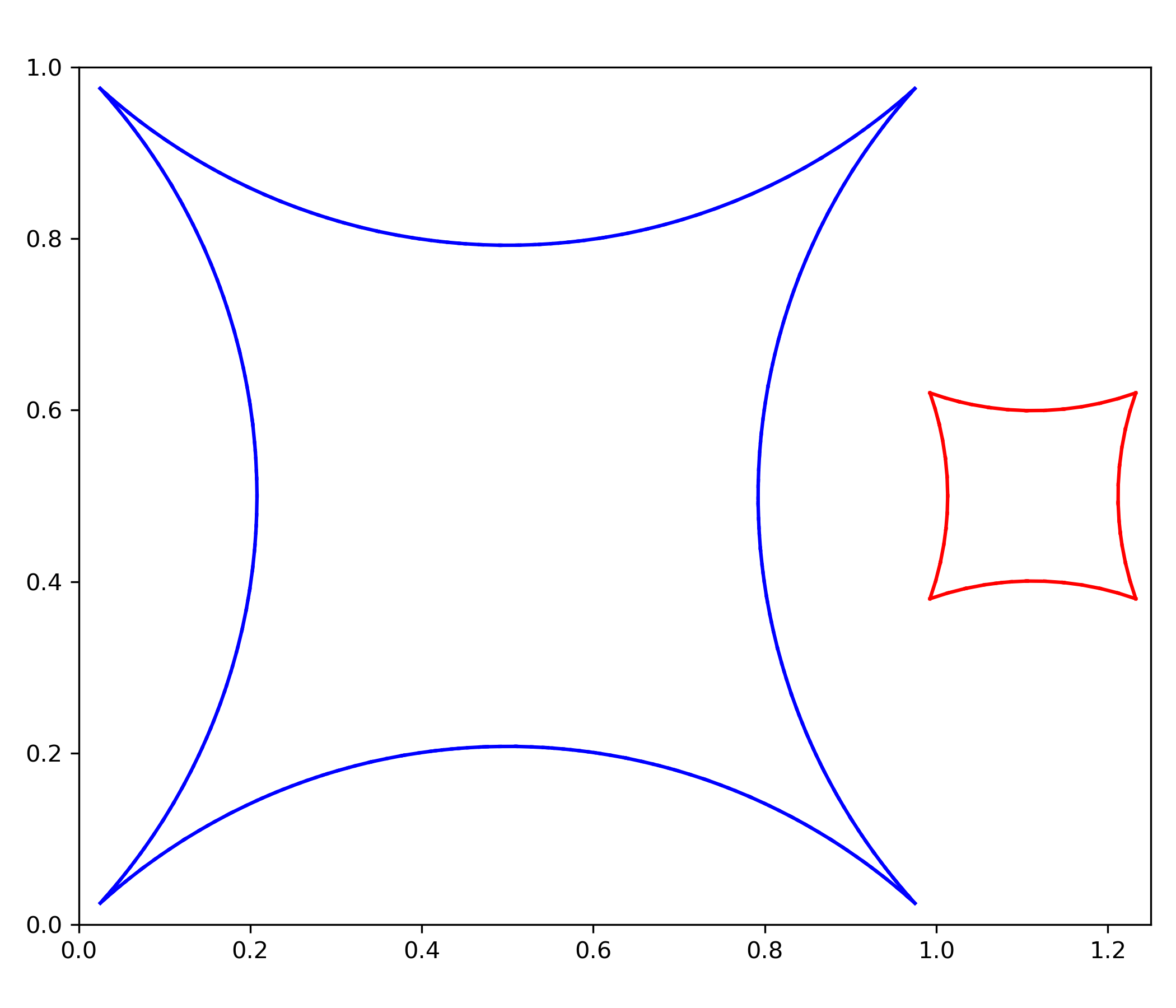}
    \end{subfigure}
    \hfill
    \begin{subfigure}[b]{0.22\textwidth}
        \centering
        \includegraphics[width=\textwidth]{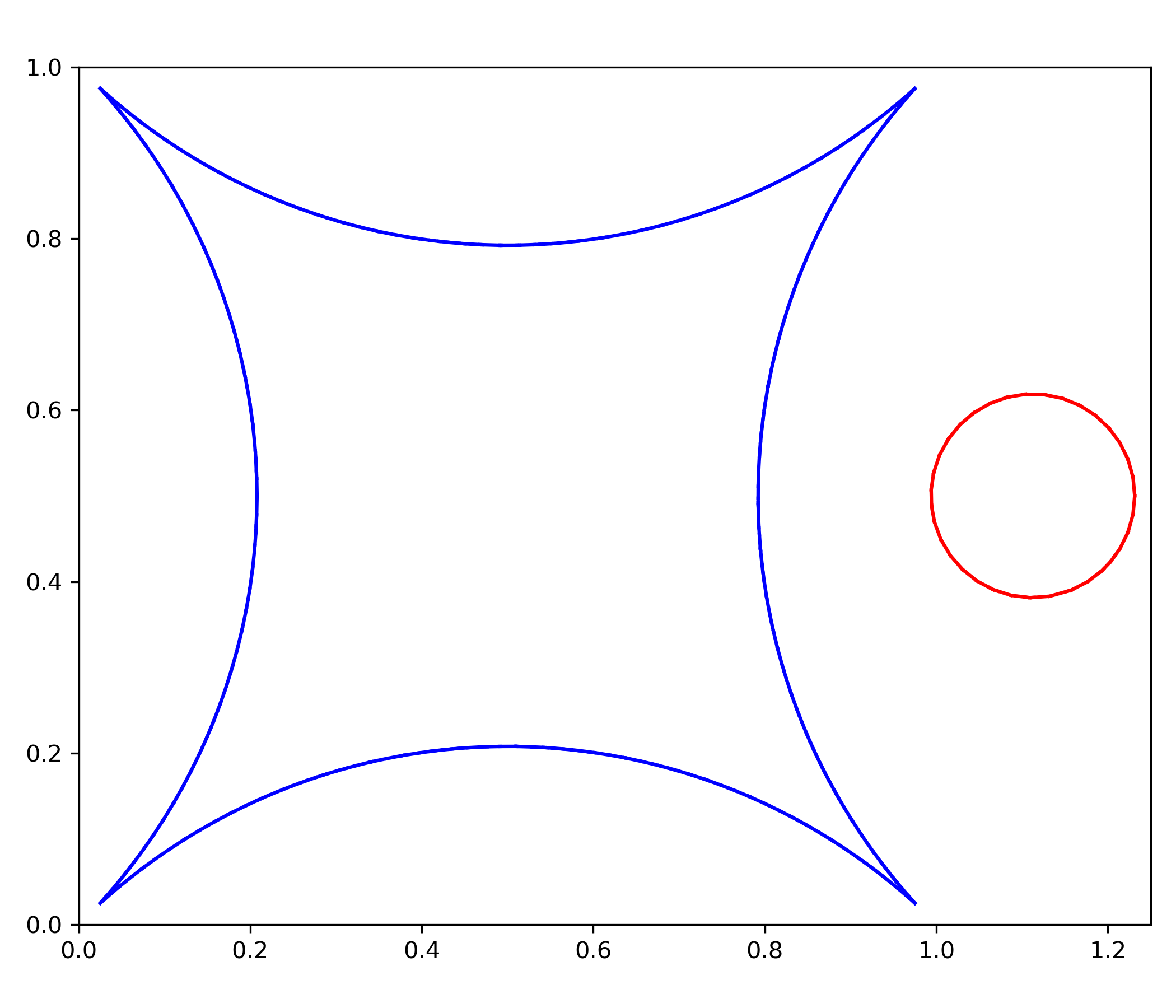}
    \end{subfigure}
    
\caption{\textbf{Three-dimensional geometries with volume fraction $\varphi = 0.605$.} The reference geometry (A) features flat interfaces, while (B) and (C) incorporate convex and concave deformations of the capillary and neuronal surfaces, respectively. Geometry (D) combines a concave neuronal surface with a cylindrical capillary. The upper panels show the full three-dimensional geometries, while the lower panels show their intersections with the plane $z=0.5$.}
\label{fig:3d-geometries}
\end{figure}

For each of the four geometries, we solve the corresponding three-dimensional cell problems and compute the effective diffusion tensor $D^{\mathrm{eff}}$ according to \eqref{genefftensor}. For the flat, convex and concave geometries, the resulting tensors are
$$
D^{\mathrm{eff}}_{\mathrm{flat}} =
\begin{bmatrix}
0.470 &0 &0 \\
0 & 0.471 &0 \\
0 &0 & 0.515 \\
\end{bmatrix},
\quad
D^{\mathrm{eff}}_{\mathrm{convex}} =
\begin{bmatrix}
0.472 &0 &0 \\
0 & 0.456 &0 \\
0 &0 & 0.519 \\
\end{bmatrix},
\quad
D^{\mathrm{eff}}_{\mathrm{concave}} =
\begin{bmatrix}
0.271 & 0 & 0 \\
0 & 0.368 & 0 \\
0 & 0 & 0.388 \\
\end{bmatrix}.
$$
 The flat and convex geometries yield comparable effective diffusivities, with the largest relative change being a decrease of approximately $3\%$ in the $x_1$-direction. In contrast, the concave geometry leads to a substantial reduction in all three components. Relative to the flat geometry, the effective diffusivities in the $x_1$-, $x_2$-, and $x_3$-directions decrease by approximately $41\%$, $21\%$, and $24\%$, respectively. Thus, in this case, the concave geometry has a considerably stronger effect on effective transport than the convex geometry.

Finally, we consider the more realistic setting of Figure~\ref{fig:3d-geometries}(D), which combines the concave neuronal surface of the concave geometry with the cylindrical capillary of the convex geometry.
The resulting effective diffusion tensor is
$$
D^{\mathrm{eff}}_{\mathrm{mixed}} =
\begin{bmatrix}
0.273 & 0 & 0 \\
0 & 0.380 & 0 \\
0 & 0 & 0.388 \\
\end{bmatrix}.
$$
Compared with the concave geometry, which differs only in the capillary shape, the diffusivities change by less than $4\%$ in all directions. Compared with the convex geometry, they decrease by approximately $41\%$, $16\%$, and $25\%$. Hence, the curvature of the neuronal surface has a considerably larger influence on the effective transport than the curvature of the capillary.

These results demonstrate that the effective diffusion tensor depends not only on the volume fraction and the orientation of preferential pathways, but also on the detailed geometry of the microscopic interfaces. In particular, while the preferential pathways determine the directionality of transport, changes in interface curvature can substantially modify the magnitude of the effective diffusivities.

\newpage
\appendix
\section{Homogenization theory}\label{homgtheory}
We rely on the two-scale convergence with parameters to derive effective macroscopic equations from microscopic heterogeneities. This concept formalizes how sequences of functions with fine-scale oscillations (arising from periodic microstructures) can converge to functions that capture both macro- and micro-scale behavior. We refer the reader to \cite{Allaire1992, AllaireDamlamianHornung, Nguetseng, RaduDipl, NeussRadu} for proofs and further technical details.

\begin{df}[Two-scale convergence]\label{def:twoscale}
 A sequence $\{u_\varepsilon\}$ in $L^2(\Omega)$ is two-scale convergent to $u \in L^2(\Omega \times Z)$ if 
\begin{equation*}
    \lim_{\varepsilon \to 0} \int_\Omega u_\varepsilon(x) \varphi\left(x, \frac{x}{\varepsilon}\right) dx = \int_{\Omega \times Z} u(x, y) \varphi(x, y) dy dx
\end{equation*}
for any $\varphi \in L^2(\Omega; C_{\text{per}}(Z))$.
\end{df}

\begin{thm}[Two-scale convergence on oscillating surfaces]\label{def:manifoldtwoscale} Let $\{v_\varepsilon\} \subset L^2(\Gamma^\varepsilon)$ satisfy $\varepsilon \|v_\varepsilon\|^2_{L^2(\Gamma^\varepsilon)} \le C$, then there exists a two-scale limit $v \in L^2(\Omega; L^2(\Gamma))$ such that, up to a subsequence, $v_\varepsilon$ two-scale converge to $v$ in the sense that
\begin{equation*}
    \lim_{\varepsilon \to 0} \varepsilon \int_{\Gamma^\varepsilon} v_\varepsilon(x) \varphi\left(x, \frac{x}{\varepsilon}\right) dS = \int_{\Omega \times \Gamma} v(x, y) \varphi(x, y) dS_y dx
\end{equation*}
for any $\varphi \in C^0(\Omega; C_{\text{per}}(Z))$.
\end{thm}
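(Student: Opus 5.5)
The plan is to reduce the statement to the classical compactness result for bulk two-scale convergence (Definition~\ref{def:twoscale}) by a periodic unfolding of the surface.

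\textbf{Step 1: uniform boundedness of the functionals.} Consider
\[
L_\varepsilon(\varphi):=\varepsilon\int_{\Gamma^\varepsilon} v_\varepsilon(x)\,\varphi\!\left(x,\tfrac{x}{\varepsilon}\right)dS .
\]
By Cauchy--Schwarz,
\[
|L_\varepsilon(\varphi)|\le \bigl(\varepsilon\|v_\varepsilon\|^2_{L^2(\Gamma^\varepsilon)}\bigr)^{1/2}\Bigl(\varepsilon\int_{\Gamma^\varepsilon}\bigl|\varphi(x,\tfrac{x}{\varepsilon})\bigr|^2\,dS\Bigr)^{1/2}.
\]
The first factor is bounded by hypothesis.

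\textbf{Step 2: the oscillating test functions.} For continuous $\varphi$, periodic in $y$, I will prove the surface analogue of the oscillation lemma:
\[
\lim_{\varepsilon\to0}\varepsilon\int_{\Gamma^\varepsilon}\bigl|\varphi(x,\tfrac{x}{\varepsilon})\bigr|^2dS=\int_\Omega\int_\Gamma|\varphi(x,y)|^2\,dS_y\,dx .
\]
The idea is to split $\Gamma^\varepsilon$ into the cells $\varepsilon(\Gamma+\xi)$ and change variables $x=\varepsilon(y+\xi)$, so that $dS=\varepsilon^{2}dS_y$. After multiplying by $\varepsilon$, each cell carries the weight $\varepsilon^3=|\varepsilon Z|/|Z|$. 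Uniform continuity of $\varphi$ in $x$ then turns the cell sum into a Riemann sum for the $x$-integral. The cells near $\partial\Omega$ that are excluded from $\Xi$ have total volume $O(\varepsilon)$ by the Lipschitz regularity of $\Omega$, so they do not contribute in the limit.

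\textbf{Step 3: extraction and representation.} Combining Steps 1 and 2 gives $\limsup_\varepsilon|L_\varepsilon(\varphi)|\le C\|\varphi\|_{L^2(\Omega\times\Gamma)}$. I then take a countable dense subset of $C^0(\bar\Omega;C_{\mathrm{per}}(\Gamma))$, which is separable, and use a diagonal argument to find a subsequence along which $L_\varepsilon(\varphi)$ converges for every element of the subset. The bound above shows that the limit $L$ extends to a bounded linear functional on $L^2(\Omega\times\Gamma)$. By the Riesz representation theorem, $L(\varphi)=\int_{\Omega\times\Gamma}v\,\varphi$ for some $v\in L^2(\Omega;L^2(\Gamma))$. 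Convergence on the dense set plus the uniform bound then gives convergence for every admissible $\varphi$ by an $\epsilon/3$ argument.

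Test functions defined on $Z$ are handled by restricting them to $\Gamma$. Time-dependent versions follow by treating $(t,x)$ as the macroscopic variable.

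\textbf{Main obstacle.} The delicate point is Step 2, specifically the cells intersecting $\partial\Omega$. Since $\Gamma^\varepsilon$ only contains complete cells, this amounts to showing that the union of full cells converges in measure to $\Omega$, and that the Riemann sums converge for $\varphi$ merely continuous and bounded on $\Omega$. I would first try assuming $\varphi\in C(\bar\Omega;C_{\mathrm{per}})$ and then approximate.
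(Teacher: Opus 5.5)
Your argument is correct and is essentially the standard proof that the paper relies on; the paper gives no proof of its own and cites Allaire--Damlamian--Hornung and Neuss-Radu, where the Cauchy--Schwarz bound and the surface oscillation lemma make the limit functional bounded in $L^2(\Omega\times\Gamma)$, and separability plus the Riesz representation theorem then produce $v$. Two cosmetic points: the steps you carry out never use the unfolding or bulk compactness announced in your first sentence, and your cell weights $\varepsilon^3=|\varepsilon Z|/|Z|$ actually give the limit $\frac{1}{|Z|}\int_\Omega\int_\Gamma|\varphi|^2\,dS_y\,dx$ in Step~2, a constant that is harmlessly absorbed into $v$.
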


\begin{df}[Strong two-scale convergence on oscillating surfaces]\label{def:strongtwoscalesurface}
A sequence $\{v_\varepsilon\} \subset L^2(\Gamma^\varepsilon)$ satisfying 
$\varepsilon \|v_\varepsilon\|^2_{L^2(\Gamma^\varepsilon)} \le C$ is said to converge 
strongly two-scale to $v \in L^2(\Omega \times \Gamma)$ if it two-scale 
converges to $v$ in the sense of Theorem~\ref{def:manifoldtwoscale} and, additionally,
\begin{equation}\label{strongtwoscale}
    \lim_{\varepsilon \to 0} \varepsilon \int_{\Gamma^\varepsilon} 
    \left| v_\varepsilon(x) \right|^2 dS 
    = \int_{\Omega \times \Gamma} \left| v(x,y) \right|^2 dS_y\, dx.
\end{equation}
Equivalently, this means
\begin{equation*}
    \lim_{\varepsilon \to 0}\, \varepsilon \int_{\Gamma^\varepsilon} 
    \left| v_\varepsilon(x) - v\!\left(x, \tfrac{x}{\varepsilon}\right) \right|^2 dS = 0.
\end{equation*}
\end{df}

\begin{thm}[Two-scale compactness]\label{thm:cpcttwoscale} The following holds:
\begin{enumerate}
    \item[i.] If $\{u_\varepsilon\}$ is bounded in $L^2(\Omega)$, there exists a subsequence (not relabeled) such that $u_\varepsilon \rightharpoonup u$ is two-scale as $\varepsilon \to 0$ to some function $u \in L^2(\Omega \times Z)$.
    \item[ii.] If $u_\varepsilon \rightharpoonup u$ weakly in $W^{1,2}(\Omega)$ then $u_\varepsilon \rightharpoonup u$ and $\nabla u_\varepsilon \rightharpoonup \nabla u + \nabla_y u_1$ two-scale, where $u_1 \in L^2(\Omega; W^{1,2}_\text{per}(Z)/\rr)$.
    \item[iii.] If $\|u_\varepsilon\|_{W^{1,2}(\Omega^\varepsilon)} \le C$ and $\hat{u}_\varepsilon$ and $\hat{\nabla u_\varepsilon}$ are extensions by zero from $\Omega^\varepsilon_e$ into $\Omega$ of $u_\varepsilon$ and $\nabla u_\varepsilon$ respectively, then, up to a subsequence, $\hat{u}_\varepsilon$ and $\hat{\nabla u_\varepsilon}$ converge two-scale to $u \chi_Y$ and $[\nabla u + \nabla_y u_1] \chi$ respectively, where $\chi_Y = \chi_Y(y)$ is the characteristic function of $Y$, $u \in W^{1,2}(\Omega)$ and $u_1 \in L^2(\Omega; W^{1,2}_{\text{per}}(Y)/\mathbb{R})$.
\end{enumerate}
\end{thm}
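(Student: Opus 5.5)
The statement collects three classical facts of two-scale convergence (Nguetseng, Allaire). I would prove them in order, each building on the previous, using only the uniform bounds and the extension operator of Remark~\ref{rem:extension}.

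\emph{Part i.} For an admissible test function $\varphi\in L^2(\Omega;\Cper(Z))$ I would first prove the mean-value property
\[
\|\varphi(\cdot,\cdot/\varepsilon)\|_{L^2(\Omega)}\to |Z|^{-1/2}\|\varphi\|_{L^2(\Omega\times Z)}.
\]
This holds by density, starting from the case $\varphi(x,y)=\psi(x)\theta(y)$, where it reduces to the weak-$*$ convergence of periodic oscillations to their mean. With this in hand, the linear functionals $\mu_\varepsilon(\varphi):=\int_\Omega u_\varepsilon\varphi(x,x/\varepsilon)\,dx$ satisfy $\limsup|\mu_\varepsilon(\varphi)|\le C\|\varphi\|_{L^2(\Omega\times Z)}$. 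Since $L^2(\Omega;\Cper(Z))$ is separable, a diagonal argument over a countable dense set gives a subsequence along which $\mu_\varepsilon(\varphi)$ converges for every $\varphi$. The limit functional is bounded in the $L^2(\Omega\times Z)$-norm, so it extends to $L^2(\Omega\times Z)$, and the Riesz representation yields $u$. The normalisation by $|Z|$ is absorbed into the convention of Definition~\ref{def:twoscale}.

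\emph{Part ii.} By part i, along a subsequence $u_\varepsilon\rightharpoonup u_0(x,y)$ and $\nabla u_\varepsilon\rightharpoonup\bxi(x,y)$ two-scale. I would then proceed in three steps.
\begin{itemize}
\item Testing $\varepsilon\nabla u_\varepsilon$ against $\Phi\in\mathcal D(\Omega;\Sper(Z))^3$ and integrating by parts gives $\int\!\int u_0\,\mathrm{div}_y\Phi=0$. Hence $u_0$ is independent of $y$, and by Rellich compactness ($u_\varepsilon\to u$ strongly in $L^2$) we get $u_0=u$.
\item Next I test with fields satisfying $\mathrm{div}_y\Phi=0$. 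Integrating by parts gives
\[
\int\nabla u_\varepsilon\cdot\Phi(x,x/\varepsilon)=-\int u_\varepsilon\,\mathrm{div}_x\Phi(x,x/\varepsilon).
\]
Passing to the limit yields $\int\!\int(\bxi-\nabla u)\cdot\Phi=0$ for all such $\Phi$.
\item The key step is the periodic Helmholtz decomposition: the orthogonal complement in $L^2_{\mathrm{per}}(Z)^3$ of the $y$-divergence-free fields is $\{\nabla_y w: w\in W^{1,2}_{\mathrm{per}}(Z)\}$. This gives $\bxi-\nabla u=\nabla_y u_1$, with $u_1$ measurable in $x$ after a selection via the Lax--Milgram solution operator.
\end{itemize}

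\emph{Part iii.} I would apply the extension $\tilde u_\varepsilon$ of Remark~\ref{rem:extension}, which is bounded in $W^{1,2}(\Omega)$, and use part ii to obtain $u$ and $u_1$. Then I write
\[
\hat u_\varepsilon=\chi_Y(x/\varepsilon)\,\tilde u_\varepsilon,\qquad \widehat{\nabla u_\varepsilon}=\chi_Y(x/\varepsilon)\,\nabla\tilde u_\varepsilon,
\]
so testing with $\varphi$ amounts to testing the extensions with $\chi_Y(y)\varphi(x,y)$. This immediately yields the limits $u\chi_Y$ and $(\nabla u+\nabla_y u_1)\chi_Y$, after restricting $u_1$ to $Y$.

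\emph{Main obstacle.} I expect the hardest point to be that $\chi_Y\varphi$ is not in $L^2(\Omega;\Cper(Z))$, since $\chi_Y$ is discontinuous in $y$. For $\hat u_\varepsilon$ this is harmless, because $\tilde u_\varepsilon\to u$ strongly in $L^2$ and only the mean-value property of $\chi_Y(x/\varepsilon)\varphi(x,x/\varepsilon)$ is needed. For the gradient, where the convergence is only weak, I would first try to show that test functions in $C(\overline\Omega;L^\infty_{\mathrm{per}}(Z))$ are admissible. This is the Carathéodory-type lemma of Allaire: approximate $\chi_Y$ in $L^2(Z)$ by continuous periodic functions, and control the error uniformly in $\varepsilon$ by Cauchy--Schwarz, the $L^2$ bound on $\nabla\tilde u_\varepsilon$, and the mean-value property applied to $|\chi_Y-\theta_n|^2$.
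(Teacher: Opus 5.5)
Your proposal is correct. The paper itself gives no proof of this theorem and only refers to Nguetseng, Allaire, Allaire--Damlamian--Hornung and Neuss-Radu. Your parts i and ii are exactly the classical arguments from those sources: the mean-value property, diagonal/weak-$*$ compactness and Riesz representation for i; the $\varepsilon\nabla u_\varepsilon$ test, divergence-free test fields and the periodic Helmholtz decomposition for ii.

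The only genuine choice is in part iii. You reduce it to part ii via the extension operator of Remark~\ref{rem:extension} and the identity $\hat u_\varepsilon=\chi_Y(x/\varepsilon)\tilde u_\varepsilon$. The extension-free argument used in the two-scale literature for perforated domains instead works with test fields that are divergence-free in $Y$ and tangential on $\Gamma_C\cup\Gamma_N$; connectedness of $Y$ makes their $y$-averages exhaust all smooth vector fields in $x$, which yields $u\in W^{1,2}(\Omega)$. Your route is shorter. It also guarantees that the $u,u_1$ of iii coincide with those obtained from ii for $\tilde u_\varepsilon$, which is precisely how the two parts are combined in Lemma~\ref{lem:twoscale}. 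The price is reliance on the extension theorem, which the statement of iii leaves implicit.

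Three small points.

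(1) Your proof of ii gives the gradient limit only along a subsequence, which is correct. The statement omits this, but it cannot be dropped. Take $\psi\in\mathcal{D}(\Omega)$, nonconstant $w\in\Sper(Z)$, and set $u_\varepsilon=\varepsilon\psi(x)w(x/\varepsilon)$ for $1/\varepsilon\in 2\mathbb{N}$ and $u_\varepsilon=0$ for $1/\varepsilon\in 2\mathbb{N}-1$. This sequence converges weakly to $0$ in $W^{1,2}(\Omega)$, yet $\nabla u_\varepsilon$ has the two distinct two-scale limits $\psi\nabla_y w$ and $0$. Only the two-scale convergence $u_\varepsilon\rightharpoonup u$ holds for the full sequence.

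(2) The products $\chi_Y\varphi$ with $\varphi\in L^2(\Omega;\Cper(Z))$ do not belong to $C(\overline{\Omega};L^\infty_{\mathrm{per}}(Z))$. So either add a density step in $x$, or run your $\theta_n$-approximation directly. For the latter, set $\Phi:=\sup_y|\varphi(\cdot,y)|^2\in L^1(\Omega)$ and use $\int_\Omega|\chi_Y-\theta_n|^2(x/\varepsilon)\,|\varphi(x,x/\varepsilon)|^2\,dx\le\int_\Omega|\chi_Y-\theta_n|^2(x/\varepsilon)\,\Phi(x)\,dx$. By weak-$*$ convergence of periodic oscillations, the right-hand side tends to $|Z|^{-1}\|\chi_Y-\theta_n\|_{L^2(Z)}^2\|\Phi\|_{L^1(\Omega)}$.

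(3) By the definition of $\Xi$, cells whose inclusions are not contained in $\Omega$ are not perforated. Hence $\hat u_\varepsilon=\chi_Y(x/\varepsilon)\tilde u_\varepsilon$ holds only outside a boundary layer $B_\varepsilon$ with $|B_\varepsilon|\to 0$. The resulting error, for both $\hat u_\varepsilon$ and the gradients, is bounded by $\|\tilde u_\varepsilon\|_{W^{1,2}(\Omega)}\bigl(\int_{B_\varepsilon}\Phi\,dx\bigr)^{1/2}\to 0$, so it does not affect the limits.
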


\bibliographystyle{plain}
\bibliography{bibliography}

\end{document}